\newcommand{\paperfont}{\fontsize{11pt}{1.2\baselineskip}\selectfont}
\begin{document}

\theoremstyle{definition}
\makeatletter
\thm@headfont{\bf}
\makeatother
\newtheorem{definition}{Definition}
\newtheorem{example}{Example}
\newtheorem{theorem}{Theorem}
\newtheorem{lemma}{Lemma}
\newtheorem{corollary}{Corollary}
\newtheorem{remark}{Remark}
\newtheorem{proposition}{Proposition}

\lhead{}
\rhead{}
\lfoot{}
\rfoot{}

\renewcommand{\refname}{References}
\renewcommand{\figurename}{Figure}
\renewcommand{\tablename}{Table}
\renewcommand{\proofname}{Proof}

\newcommand{\diag}{\mathrm{diag}}

\numberwithin{equation}{section}

\title{\textbf{Reduction of Markov chains with two-time-scale state transitions}}
\author{Chen Jia$^1$ \\
\footnotesize $^1$School of Mathematical Sciences, Peking University, Beijing 100871, China.\\
\footnotesize Email address: jiac@pku.edu.cn}
\date{}                              
\maketitle                           
\thispagestyle{empty}                

\paperfont

\begin{abstract}
In this paper, we consider a general class of two-time-scale Markov chains whose transition rate matrix depends on a parameter $\lambda>0$. We assume that some transition rates of the Markov chain will tend to infinity as $\lambda\rightarrow\infty$. We divide the state space of the Markov chain $X$ into a fast state space and a slow state space and define a reduced chain $Y$ on the slow state space. Our main result is that the distribution of the original chain $X$ will converge in total variation distance to that of the reduced chain $Y$ uniformly in time $t$ as $\lambda\rightarrow\infty$. \\

\noindent 
\textbf{Keywords}: time scale, limit behavior, asymptotic behavior, approximation, singularly perturbed Markov chains

\noindent 
\textbf{Classifications}: 60J27, 60J28, 93C70
\end{abstract}

\section{Introduction}
In many areas of natural sciences, we often encounter systems that can be modeled by the following coupled stochastic differential equation with two separate time scales:
\begin{equation}\label{SDE}\left\{
\begin{split}
dX_t &= b_1(X_t,Y_t)dt + \sigma_1(X_t,Y_t)dW_t, \\
dY_t &= \lambda^2b_2(X_t,Y_t)dt + \lambda\sigma_2(X_t,Y_t)dW_t,
\end{split}\right.
\end{equation}
where $X_t\in R^k$, $Y_t\in R^{n-k}$, and $\lambda>0$ is a parameter. When $\lambda$ is very large, the components of $X_t$ are slow variables and the components of $Y_t$ are fast variables. Roughly speaking, if we focus on the dynamics of the slow variables $X_t$, then the fast variables $Y_t$ can be averaged out. In this way, we can reduce the original $n$-dimensional stochastic differential equation to a simpler $k$-dimensional stochastic differential equation. This topic has been discussed thoroughly \cite{kifer1992averaging, veretennikov1999large, imkeller2001stochastic, khasminskii2005limit}.

In the above problem, the phase space of the stochastic system is the Euclidean space. However, in a number of problems arising in physics, chemistry, biology, and engineering \cite{cornish1995fundamentals, keener1998mathematical, beard2008chemical}, we frequently encounter stochastic systems that can be modeled by continuous-time Markov chains with a discrete state space whose state transitions have two separate time scales. Specifically, let $X=\{X_t: t\geq 0\}$ be a continuous-time Markov chain with finite state space $S$ in which some transition rates are much larger than the other ones. Within this framework, the transition rates between states have two separate time scales.

In order to study this type of two-time-scale Markov chains, engineers proposed the concept of stiff Markov chains \cite{bobbio1986aggregation, reibman1989analysis, bobbio1990computing, malhotra1994stiffness}. Roughly speaking, let $\alpha$ be a given threshold value. If a transition rate of the Markov chain $X$ is larger or smaller than $\alpha$, then this rate is called a fast or slow rate, respectively. The state space $S$ of the Markov chain $X$ can be further divided into a fast state space $A$ and a slow state space $B$. Since the holding times of the fast states are much shorter than those of the slow states, we have good reasons to believed that the original chain $X$ on the state space $S$ can be reduced to a simpler Markov chain $Y$ on the slow state space $B$. Although engineers have studied the approximation algorithm for stiff Markov chains, they did not obtain any rigorous mathematical results about the asymptotic behavior of stiff Markov chains since the choice of the threshold value $\alpha$ is rather arbitrary.

In addition, Yin, Zhang, and coworkers \cite{khashinskii1996asymptotic, yin2000asymptotic, yin2000singularly, yin2007singularly} have done a systematic study on an important class of two-time-scale Markov chains named as singularly perturbed Markov chains, and these materials have been organized into a textbook recently \cite{yin2012continuous}. In a singularly perturbed Markov chain $X$, the transition rate matrix $Q(\lambda)$ depends on a parameter $\lambda>0$ in a linear way:
\begin{equation}
Q(\lambda) = \lambda\widetilde{Q}+\widehat{Q},
\end{equation}
where $\widetilde{Q}$ and $\widehat{Q}$ are two transition rate matrices. When $\lambda$ is very large, the transition rate matrix $\widetilde{Q}$ governs the rapidly changing components and the transition rate matrix $\widehat{Q}$ governs the slowly changing ones. In the singularly perturbed literature, the authors used the analytic approach of matched asymptotic expansions from singular perturbation theory to construct approximate sequences for the distribution of the Markov chain $X$. The authors proved that the distribution of the Markov chain $X$ at time $t$ will converge to the so-called zero-order outer expansion $\phi(t)$ as $\lambda\rightarrow\infty$ for any $t>0$.

In some areas of natural sciences such as biochemistry and biophysics, we frequently encounter chemical reaction systems that can be modeled by continuous-time Markov chains whose transition rate matrix depends on a parameter $\lambda>0$, which usually represents the concentration of a molecule \cite{cornish1995fundamentals, keener1998mathematical, beard2008chemical}. In these systems, however, the transition rate matrix $Q(\lambda)$ in general does not depend on $\lambda$ in a linear way. Therefore, we need to study the asymptotic behavior of general two-time-scale Markov chains that cannot be described by singularly perturbed Markov chains. Although this problem is fundamental and important, there is still a lack of rigorous mathematical results about this problem. The aim of this paper is to fill in this gap.

In this paper, we consider a general class of two-time-scale Markov chains whose transition rate matrix $Q(\lambda)$ depends on a parameter $\lambda>0$. We assume that some transition rates of the Markov chain $X$ will tend to infinity as $\lambda\rightarrow\infty$. Similar to the consideration in stiff Markov chains, we divide the state space $S$ into a fast state space $A$ and a slow state space $B$. Moreover, we define a reduced chain $Y=\{Y_t: t\geq 0\}$ on the slow state space $B$. We then use a purely probabilistic approach to prove that the distribution of the original chain $X$ will converge in total variation distance to that of the reduced chain $Y$ uniformly in time $t$ as $\lambda\rightarrow\infty$ (see Corollaries \ref{dtvlocal} and \ref{dtvglobal}). Specifically, if the initial distribution $\pi$ of the original chain $X$ is concentrated on the low state space $B$, then we prove that for any $T>0$,
\begin{equation}\label{convergence}
\lim_{\lambda\rightarrow\infty}\sup_{0\leq t\leq T}d_{TV}(P^\lambda_{\pi}(X_t\in\cdot),P_{\pi}(Y_t\in\cdot)) = 0,
\end{equation}
where $P^\lambda_{\pi}$ denotes the probability measure under transition rate matrix $Q(\lambda)$ and initial distribution $\pi$, and $d_{TV}$ represents the total variation distance. This result shows that if the initial distribution of the original chain $X$ is concentrated on the slow state space $B$, then the distributions of the original chain $X$ and the reduced chain $Y$ will be close to each other over any finite time interval when $\lambda$ is sufficiently large. The readers may ask whether the above approximation theorem holds not only over any finite time interval, but also over the whole time axis. In general, the answer is false (see Example \ref{counter}). However, we prove a satisfying result that if the reduced chain $Y$ is irreducible, then the convergence is uniform over the whole time axis, that is,
\begin{equation}
\lim_{\lambda\rightarrow\infty}\sup_{t\geq 0}d_{TV}(P^\lambda_{\pi}(X_t\in\cdot),P_{\pi}(Y_t\in\cdot)) = 0.
\end{equation}

Moreover, we also study the asymptotic behavior of the Markov chain $X$ under general initial distributions and obtain the corresponding approximation theorems (see Theorems \ref{weaklocal} and \ref{weakglobal}). If the initial distribution of the original chain $X$ is not concentrated on the slow state space $B$, we cannot expect that the distributions of the original chain $X$ and the reduced chain $Y$ are close to each other over the whole time axis. However, we prove that although the initial distribution may not be concentrated on $B$, the distribution of the original chain $X$ will be ``almost" concentrated on $B$ after a very short time when $\lambda$ is sufficiently large (see Theorem \ref{shortterm1}). Based on this fact, we prove that for any $h>0$, the distributions of the original chain $X$ and the reduced chain $Y$ are close to each other after time $h$ when $\lambda$ is sufficiently large. Specifically, if the initial distribution $\pi$ of the original chain $X$ is not concentrated on the slow state space $B$, then we prove that for any $0<h<T$,
\begin{equation}
\lim_{\lambda\rightarrow\infty}\sup_{h\leq t\leq T}d_{TV}(P^\lambda_{\pi}(X_t\in\cdot),P_{\gamma(\pi)}(Y_t\in\cdot)) = 0,
\end{equation}
where $\gamma(\pi)$ is a probability distribution concentrated on the slow state space $B$. If the reduced chain $Y$ is further assumed to be irreducible, then the above convergence can be strengthened as follows:
\begin{equation}
\lim_{\lambda\rightarrow\infty}\sup_{t\geq h}d_{TV}(P^\lambda_{\pi}(X_t\in\cdot),P_{\gamma(\pi)}(Y_t\in\cdot)) = 0.
\end{equation}

At the end of this paper, we study the relationship between our work and the theory of singularly perturbed Markov chains in great detail. We hope that the approximation theorems established in this paper can give enlightenment to both theoretical analysis and numerical simulation of stochastic systems modeled by two-time-scale Markov chains arising in physics, chemistry, biology, and engineering.

\section{Preliminaries}
In this paper, we consider a continuous-time Markov chain $ X=\{X_t:t\geq 0\}$ on the finite state space $S=\{1,2,\cdots,|S|\}$ with transition rate matrix $Q(\lambda)=(q_{ij}(\lambda))$ which depends on a parameter $\lambda>0$. The finiteness of the state space $S$ is essential to establishing the main results of this paper. For simplicity, we assume that the transition rate matrix $Q(\lambda)$ is irreducible for each $\lambda>0$. We further assume that $\lim_{\lambda\rightarrow\infty}q_{ij}(\lambda)$ is finite or $\lim_{\lambda\rightarrow\infty}q_{ij}(\lambda) = \infty$ for any pair of states $i\neq j$. When $\lambda$ is sufficiently large, this framework just describes a Markov chain whose state transitions have two separate time scales. We shall study the limit behavior of the Markov chain $X$ as $\lambda\rightarrow\infty$.

Consistent with standard notations \cite{norris1998markov}, we set $q_i(\lambda) = -q_{ii}(\lambda) = \sum_{j\neq i}q_{ij}(\lambda)$ and set $q_i = \lim_{\lambda\rightarrow\infty}q_i(\lambda)$. According to whether these $q_i$ are finite or infinity, we can classify the state space $S$ into two disjoint subsets.
\begin{definition}\label{states}
Let $q_i = \lim_{\lambda\rightarrow\infty}q_i(\lambda)$. \\
(1) If $q_i = \infty$, then $i$ is called a fast state. The set of all fast states is denoted by $A$. \\
(2) If $q_i < \infty$, then $i$ is called a slow state. The set of all slow states is denoted by $B$.
\end{definition}

Obviously, we have $A\cap B = \emptyset$ and $A\cup B = S$. If $i$ is a fast state, then $q_i(\lambda)$ will be very large when $\lambda$ is sufficiently large. Recall that $q_i(\lambda)$ is the rate of the exponential holding time of state $i$. This means that the holding times of state $i$ will be very short when $\lambda$ is sufficiently large. That is why we name such a state $i$ as a fast state. According to the above definition, $\lim_{\lambda\rightarrow\infty}q_{ij}(\lambda)$ will be finite for any state $j$ if $i$ is a slow state. In the following discussion, we always assume that the slow state space $B\neq\emptyset$ and assume that $q_i>0$ for each $i\in B$.

\begin{example}
Figure \ref{MWCmodel} illustrates a Markov chain $X$ for which two transitions rates depend on $\lambda$ in a linear way and other transition rates are independent of $\lambda$. This Markov chain, which is referred to as the Monod-Wyman-Changeux allosteric model \cite{monod1965nature, changeux2012allostery}, is important in biochemistry and biophysics since it is widely used to model the conformational changes of receptors in living cells. According to Definition \ref{states}, the fast state space is $A = \{1,2\}$ and the slow state space is $B = \{3,4\}$.
\end{example}
\begin{figure}[!htb]
\begin{center}
\centerline{\includegraphics[width=0.3\textwidth]{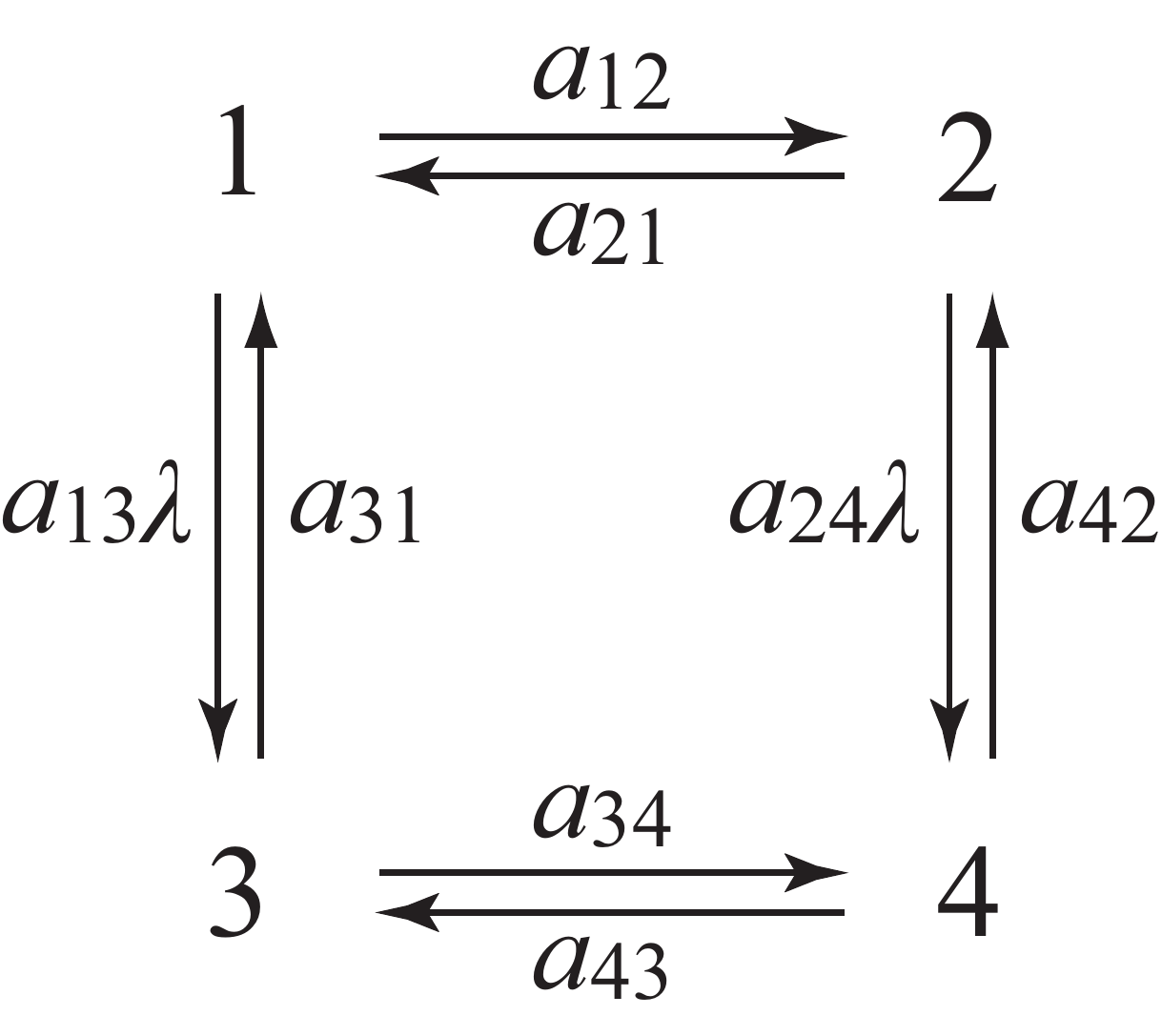}}
\caption{The Monod-Wyman-Changeux allosteric model.}\label{MWCmodel}
\end{center}
\end{figure}

By relabeling the state space $S$, we can always arrange matters so that $A = \{1,\cdots,|A|\}$ and $B = \{|A|+1,\cdots,|S|\}$. From now on, we take for granted that we have done this. Thus the transition rate matrix $Q(\lambda)$ can be represented as a block matrix
\begin{equation}
Q(\lambda) = \begin{pmatrix} Q_{AA}(\lambda) & Q_{AB}(\lambda) \\ Q_{BA}(\lambda) & Q_{BB}(\lambda) \end{pmatrix}.
\end{equation}
Let $Q = \lim_{\lambda\rightarrow\infty}Q(\lambda)$. The matrix $Q$ can be also represented as a block matrix
\begin{equation}
Q  = \begin{pmatrix} Q_{AA} & Q_{AB} \\ Q_{BA} & Q_{BB} \end{pmatrix}.
\end{equation}
Note that some elements of the matrix $Q$ may be $\infty$ or $-\infty$. According to the definition of the slow state space $B$, the elements of both matrices $Q_{BA}$ and $Q_{BB}$ are all finite.

In order to study the limit behavior of the Markov chain $X$ as $\lambda\rightarrow\infty$, we need the help of the jump chain \cite{norris1998markov}, also called the imbedded chain. Let $\xi = \{\xi_n:n\geq 0\}$ be the jump chain of $X$ with transition probability matrix $\Omega(\lambda) = (\omega_{ij}(\lambda))$ where $\omega_{ij}(\lambda) = q_{ij}(\lambda)/q_i(\lambda)$ for any pair of states $i\neq j$, and $\omega_{ii}(\lambda) = 0$ for any state $i$.  We also represent $\Omega(\lambda)$ as a block matrix
\begin{equation}
\Omega(\lambda) = \begin{pmatrix} \Omega_{AA}(\lambda) & \Omega_{AB}(\lambda) \\ \Omega_{BA}(\lambda) & \Omega_{BB}(\lambda) \end{pmatrix}.
\end{equation}
We further assume that $\lim_{\lambda\rightarrow\infty}\omega_{ij}(\lambda)$ exists for any pair of states $i\neq j$. Let $\Omega = \lim_{\lambda\rightarrow\infty}\Omega(\lambda)$. We also represent $\Omega = (\omega_{ij})$ as a block matrix
\begin{equation}
\Omega = \begin{pmatrix} \Omega_{AA} & \Omega_{AB} \\ \Omega_{BA} & \Omega_{BB} \end{pmatrix}.
\end{equation}
Since $\Omega(\lambda)$ is a stochastic matrix for each $\lambda>0$, $\Omega$ is also a stochastic matrix. Let $\eta=\{\eta_n:n\geq 0\}$ be a discrete-time Markov chain with transition probability matrix $\Omega$.

\section{Reduction of the Markov chain over finite time intervals}
In the following discussion, we shall study the limit behavior of the Markov chain $X$ as $\lambda\rightarrow\infty$.
Let $B$ be the slow state space. Let
\begin{equation}
T_B = \inf\{n\geq 0:\eta_n\in B\}
\end{equation}
be the first-passage time of $B$ for the discrete-time Markov chain $\eta$.

\begin{lemma}\label{invertible}
Assume that $P_i(T_B<\infty) = 1$ for any $i\in A$. Then the matrix $I-\Omega_{AA}$ is invertible and
\begin{equation}
(I-\Omega_{AA})^{-1} = \sum_{n=0}^\infty \Omega_{AA}^n.
\end{equation}
\end{lemma}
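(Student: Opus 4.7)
The plan is to exploit the probabilistic meaning of the iterates $\Omega_{AA}^n$ and translate the finiteness-of-hitting-time hypothesis into a spectral-radius statement. First I would establish by induction on $n$ the identity
\begin{equation*}
(\Omega_{AA}^n)_{ij} = P_i(\eta_n = j,\; T_B > n) \qquad (i,j\in A),
\end{equation*}
which holds because each matrix multiplication by $\Omega_{AA}$ corresponds, via the Markov property of $\eta$, to taking one additional step that remains inside $A$. The base case $n=0$ is the identity matrix, and the inductive step is a straightforward application of the total probability formula restricted to paths whose intermediate states all lie in $A$.

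Summing the above identity over $j\in A$ gives the key scalar relation
\begin{equation*}
\sum_{j\in A}(\Omega_{AA}^n)_{ij} = P_i(T_B > n).
\end{equation*}
Under the hypothesis $P_i(T_B<\infty)=1$, the right-hand side tends to $0$ as $n\to\infty$ for every $i\in A$. Since all entries of $\Omega_{AA}^n$ are nonnegative and each row sum vanishes in the limit, the entire matrix $\Omega_{AA}^n$ converges entry-wise to the zero matrix.

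The main obstacle, which is however mild because $A$ is finite, is to pass from entry-wise convergence $\Omega_{AA}^n\to 0$ to convergence of the Neumann series. I would handle this by a spectral-radius argument: if $\mu$ is any eigenvalue of $\Omega_{AA}$ with eigenvector $v\neq 0$, then $\Omega_{AA}^n v = \mu^n v\to 0$ forces $|\mu|<1$, so the spectral radius $\rho(\Omega_{AA})<1$. This simultaneously shows that $1$ is not an eigenvalue, so $I-\Omega_{AA}$ is invertible, and that the Neumann series $\sum_{n=0}^\infty\Omega_{AA}^n$ converges in matrix norm. Finally, from the telescoping identity
\begin{equation*}
(I-\Omega_{AA})\sum_{n=0}^{N}\Omega_{AA}^n = I-\Omega_{AA}^{N+1},
\end{equation*}
I would let $N\to\infty$ and multiply on the left by $(I-\Omega_{AA})^{-1}$ to obtain the claimed formula. (An alternative route to the series convergence, avoiding spectral theory, is to use finiteness of $A$ to produce $N$ and $\varepsilon>0$ with $P_i(T_B\leq N)\geq\varepsilon$ for every $i\in A$; a Markov-property bootstrapping then yields $P_i(T_B>kN)\leq(1-\varepsilon)^k$, so $\sum_n P_i(T_B>n)<\infty$ and the row sums of the Neumann series are finite.)
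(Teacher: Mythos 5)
Your proposal is correct, but it reaches the conclusion by a genuinely different route from the paper. The paper's proof is a short matrix-theoretic appeal: it observes that $\Omega_{AA}$ is a nonnegative substochastic matrix, translates the hypothesis $P_i(T_B<\infty)=1$ into the statement that no subset of $A$ is a closed set for $\eta$, and then cites the Perron--Frobenius theorem to conclude that all eigenvalues of $\Omega_{AA}$ have modulus strictly less than $1$, from which invertibility and the Neumann series follow. You instead exploit the probabilistic identity $[\Omega_{AA}^n]_{ij}=P_i(\eta_n=j,\,T_B>n)$, so that the row sums equal $P_i(T_B>n)\to 0$; this gives $\Omega_{AA}^n\to 0$ directly from the hypothesis, and the bound $\rho(\Omega_{AA})<1$ then follows by the elementary eigenvector argument, with the telescoping identity closing the proof. (The paper itself uses the same probabilistic identity later, in Lemma \ref{TB}, but not here.) What your route buys is self-containedness: it avoids invoking Perron--Frobenius for a possibly reducible substochastic matrix, which is precisely the step the paper leaves as a one-line appeal, and your alternative argument via a uniform $\varepsilon>0$ with $P_i(T_B\le N)\ge\varepsilon$ and the geometric tail $P_i(T_B>kN)\le(1-\varepsilon)^k$ even yields absolute convergence of $\sum_n\Omega_{AA}^n$ with an explicit rate, using only the Markov property and finiteness of $A$. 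The paper's proof is shorter but rests on the cited spectral fact; both arguments are valid.
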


\begin{proof}
Note that $\Omega_{AA}$ is a nonnegative matrix and the sum of the elements in each row of $\Omega_{AA}$ is less than or equal to 1. Since $P_i(T_B<\infty) = 1$ for any $i\in A$, any subset of $A$ is not a closed set of the Markov chain $\eta$. By the Perron-Frobenius theorem, the absolute values of all eigenvalues of $\Omega_{AA}$ are less than 1. This shows that $I-\Omega_{AA}$ is invertible and $(I-\Omega_{AA})^{-1} = \sum_{n=0}^\infty \Omega_{AA}^n$.
\end{proof}

\begin{lemma}
Assume that $P_i(T_B<\infty) = 1$ for any $i\in A$. Then the matrix
\begin{equation}\label{induced}
\Gamma = Q_{BB}+Q_{BA}(I-\Omega_{AA})^{-1}\Omega_{AB}
\end{equation}
is a transition rate matrix on the slow state space $B$.
\end{lemma}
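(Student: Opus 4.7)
The plan is to verify the three defining properties of a transition rate matrix on $B$: (i) every off-diagonal entry of $\Gamma$ is nonnegative, (ii) every row of $\Gamma$ sums to zero, and (iii) every diagonal entry is nonpositive (which will follow automatically from (i) and (ii)). The non-negativity will come from inspecting each factor, and the vanishing of row sums will come from the identities satisfied by the row sums of $Q$ and $\Omega$.

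First I would check non-negativity of the off-diagonal entries of $\Gamma$. The off-diagonal entries of $Q_{BB}$ are off-diagonal entries of $Q$, hence nonnegative; the entries of $Q_{BA}$ are likewise nonnegative. The matrix $\Omega_{AA}$ has zero diagonal (since $\omega_{ii}(\lambda)=0$ for all $\lambda$) and nonnegative off-diagonal entries, so by the series expansion from Lemma \ref{invertible}, the matrix $(I-\Omega_{AA})^{-1}=\sum_{n=0}^\infty \Omega_{AA}^n$ is entrywise nonnegative. Finally $\Omega_{AB}$ is entrywise nonnegative. Hence $Q_{BA}(I-\Omega_{AA})^{-1}\Omega_{AB}$ is nonnegative, which makes every off-diagonal entry of $\Gamma$ nonnegative.

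Next I would prove that the row sums of $\Gamma$ vanish. Let $\mathbf{1}_A$ and $\mathbf{1}_B$ denote the all-ones column vectors on $A$ and $B$ respectively. Two identities do the work. Because the rows of $Q$ sum to zero,
\begin{equation}
Q_{BB}\mathbf{1}_B = -Q_{BA}\mathbf{1}_A.
\end{equation}
Because the rows of $\Omega$ sum to one,
\begin{equation}
\Omega_{AB}\mathbf{1}_B = \mathbf{1}_A - \Omega_{AA}\mathbf{1}_A = (I-\Omega_{AA})\mathbf{1}_A,
\end{equation}
so that $(I-\Omega_{AA})^{-1}\Omega_{AB}\mathbf{1}_B = \mathbf{1}_A$. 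Substituting these into $\Gamma\mathbf{1}_B$ gives
\begin{equation}
\Gamma\mathbf{1}_B = -Q_{BA}\mathbf{1}_A + Q_{BA}\mathbf{1}_A = 0.
\end{equation}
Combined with nonnegativity of the off-diagonal entries, this forces each diagonal entry $\Gamma_{ii}$ to equal minus the sum of the other entries in its row, which is nonpositive, so (iii) holds and $\Gamma$ is a bona fide transition rate matrix on $B$.

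I do not expect any serious obstacle in this proof: once one has the Neumann series from Lemma \ref{invertible}, everything reduces to manipulating the row-sum identities of $Q$ and $\Omega$. The one point that deserves attention is making sure that $(I-\Omega_{AA})^{-1}\Omega_{AB}\mathbf{1}_B = \mathbf{1}_A$ is used correctly; it is precisely the probabilistic statement that starting from any state in $A$, the jump chain eventually enters $B$ with probability one, which is exactly the hypothesis $P_i(T_B<\infty)=1$ for all $i\in A$ that guarantees the invertibility in the first place.
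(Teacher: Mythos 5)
Your proof is correct and follows essentially the same route as the paper: nonnegativity of the off-diagonal entries via the Neumann series $(I-\Omega_{AA})^{-1}=\sum_{n\ge 0}\Omega_{AA}^n$ from Lemma \ref{invertible}, and vanishing row sums via the identities $Q_{BB}\mathbf{1}+Q_{BA}\mathbf{1}=0$ and $\Omega_{AB}\mathbf{1}=(I-\Omega_{AA})\mathbf{1}$. The only difference is cosmetic: the paper writes the nonnegativity argument entrywise with the coefficients $f_{lm}^{(n)}$, while you argue factor by factor.
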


\begin{proof}
Let $\Gamma = (\gamma_{ij})$. By Lemma \ref{invertible}, $I-\Omega_{AA}$ is invertible and $(I-\Omega_{AA})^{-1} = \sum_{n=0}^\infty(\Omega_{AA})^n$.
Let
\begin{equation}
f_{ij}^{(n)} = [(\Omega_{AA})^n]_{ij},
\end{equation}
where $[M]_{ij}$ represents the element of the matrix $M$ in the $i$-th row and the $j$-th column. It is easy to see that $f_{ij}^{(n)}\geq 0$. Thus for any $i,j\in B$ and $i\neq j$,
\begin{equation}\label{gamma}
\gamma_{ij} = q_{ij}+\sum_{n=0}^\infty\sum_{l,m\in A}q_{il}f_{lm}^{(n)}\omega_{mj} \geq 0.
\end{equation}
We still need to prove that the sum of elements in each row of $\Gamma$ is 0. To this end, denote by $1$ the column vector whose elements are all 1. We only need to prove that $\Gamma1 = 0$. In fact,
\begin{equation}
\begin{split}
\Gamma1 &= Q_{BB}1 + Q_{BA}(I-\Omega_{AA})^{-1}\Omega_{AB}1 \\
&= Q_{BB}1 + Q_{BA}(I-\Omega_{AA})^{-1}(1-\Omega_{AA}1) \\
&= Q_{BB}1 + Q_{BA}1 = 0.
\end{split}
\end{equation}
This completes the proof of this lemma.
\end{proof}

In the following discussion, we shall always assume that $P_i(T_B<\infty) = 1$ for any $i\in A$.
\begin{definition}
The Markov chain $Y = \{Y_t:t\geq 0\}$ on the slow state space $B$ with transition rate matrix $\Gamma = Q_{BB}+Q_{BA}(I-\Omega_{AA})^{-1}\Omega_{AB}$ is called the reduced chain.
\end{definition}

Let $\Gamma=(\gamma_{ij})$ be the transition rate matrix of the reduced chain $Y$. Consistent with standard notations \cite{norris1998markov}, we set $\gamma_i = -\gamma_{ii}$. It is easy to see that
\begin{equation}
\gamma_i = q_i-\sum_{n=0}^\infty\sum_{l,m\in A}q_{il}f_{lm}^{(n)}\omega_{mi}.
\end{equation}
The following inequalities are important for the estimation of exponential random variables.
\begin{lemma}\label{inequality}
$(1)$ For any $x\geq 0$, we have $e^{-x}\geq 1-x$. \\
$(2)$ For any complex number $x$ satisfying $|x|\leq 1$, we have $|e^{-x}-(1-x)| \leq |x|^2$. \\
$(3)$ For any $x>0$, we have $(1+1/x)^x \leq e$. \\
$(4)$ For any $0\leq x<1$, we have $-\log(1-x)\geq x$.
\end{lemma}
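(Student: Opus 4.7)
The plan is to treat the four parts separately, since they are independent elementary analytic inequalities; none looks deep, so the task is mainly to pick the cleanest argument for each. I would structure the proof as four short paragraphs, one per item, and expect no real obstacle beyond being a little careful in (2), where the hypothesis is that $x$ is complex and so a monotonicity argument on the real line is unavailable.

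For part (1), I would introduce $f(x)=e^{-x}-(1-x)$ on $[0,\infty)$, note $f(0)=0$, and compute $f'(x)=1-e^{-x}\geq 0$ for $x\geq 0$. Hence $f$ is non-decreasing on $[0,\infty)$ and therefore nonnegative, giving $e^{-x}\geq 1-x$.

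For part (2), since $x$ may be complex, I would rely on the power-series definition of $e^{-x}$ rather than differentiation. Writing
\begin{equation*}
e^{-x}-(1-x)=\sum_{n=2}^{\infty}\frac{(-x)^n}{n!},
\end{equation*}
the triangle inequality together with $|x|\leq 1$ gives $|x|^{n-2}\leq 1$ for $n\geq 2$, hence
\begin{equation*}
|e^{-x}-(1-x)|\leq \sum_{n=2}^{\infty}\frac{|x|^n}{n!}\leq |x|^2\sum_{n=2}^{\infty}\frac{1}{n!}=(e-2)|x|^2\leq |x|^2,
\end{equation*}
using $e-2<1$ at the final step.

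For part (3), taking logarithms reduces the inequality to $x\log(1+1/x)\leq 1$, i.e., $\log(1+y)\leq y$ with $y=1/x>0$. This last inequality I would prove by the same monotonicity trick as in (1): set $g(y)=y-\log(1+y)$, note $g(0)=0$ and $g'(y)=y/(1+y)\geq 0$ for $y\geq 0$, so $g\geq 0$. For part (4), I would again use the Taylor expansion $-\log(1-x)=\sum_{n=1}^{\infty}x^n/n$, valid for $0\leq x<1$; every term is nonnegative, and isolating the $n=1$ term gives $-\log(1-x)\geq x$.

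The only minor subtlety, as noted above, is in (2): because $x$ is complex one must use the series definition of the exponential. This is the reason I would not try to unify (1) and (2) via a single monotonicity argument. Otherwise the lemma is a packaging of standard estimates whose real role lies in the exponential-random-variable computations that presumably follow.
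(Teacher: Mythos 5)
Your four arguments are all correct, and they are the standard elementary ones; the paper itself omits these proofs entirely (declaring (1), (3), (4) straightforward and citing a textbook for (2)), so your write-up simply supplies what the paper leaves out. In particular, your use of the power series for the complex case in (2), with the bound $\sum_{n\geq 2}|x|^n/n!\leq (e-2)|x|^2\leq |x|^2$, is exactly the right way to handle the only part requiring any care.
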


\begin{proof}
The proofs of (1), (3), and (4) are straightforward. The proof of (2) can be found in \cite{durrett2010probability}. We omit these proofs.
\end{proof}

\begin{lemma}\label{timeclose}
For any $t\geq0$, $h>0$, and $i,j\in S$,
\begin{equation}
|P^\lambda_i(X_{t+h}=j) - P^\lambda_i(X_t=j)| \leq 1-e^{-q_i(\lambda)h},
\end{equation}
where $P^\lambda_i$ denotes the probability measure under transition rate matrix $Q(\lambda)$ and initial state $i$. Similarly, for any $t\geq0$, $h>0$, and $i,j\in B$,
\begin{equation}
|P_i(Y_{t+h}=j) - P_i(Y_t=j)| \leq 1-e^{-\gamma_ih}.
\end{equation}
\end{lemma}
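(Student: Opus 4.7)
The plan is to use the Markov property at time $h$ together with the fact that the holding time at the starting state is exponential, so that the probability of seeing no jump in $[0,h]$ is exactly $e^{-q_i(\lambda)h}$. Conditioning at time $h$ gives
\[
P^\lambda_i(X_{t+h}=j) \;=\; \sum_{k\in S} P^\lambda_i(X_h=k)\,P^\lambda_k(X_t=j),
\]
and since $\sum_{k\in S} P^\lambda_i(X_h=k)=1$, the difference of interest can be rewritten as
\[
P^\lambda_i(X_{t+h}=j)-P^\lambda_i(X_t=j) \;=\; \sum_{k\in S} P^\lambda_i(X_h=k)\bigl[P^\lambda_k(X_t=j)-P^\lambda_i(X_t=j)\bigr].
\]
The $k=i$ term vanishes, so only $k\neq i$ contributes.

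Next I would apply the triangle inequality, bound each bracketed difference crudely by $1$, and observe
\[
\sum_{k\neq i} P^\lambda_i(X_h=k) \;=\; 1-P^\lambda_i(X_h=i).
\]
Finally, the event $\{X_h=i\}$ contains the event that the chain makes no jump in $[0,h]$, whose probability equals $e^{-q_i(\lambda)h}$ because the holding time at $i$ is exponential with rate $q_i(\lambda)$. Hence $P^\lambda_i(X_h=i)\geq e^{-q_i(\lambda)h}$, which yields the desired bound $|P^\lambda_i(X_{t+h}=j)-P^\lambda_i(X_t=j)|\leq 1-e^{-q_i(\lambda)h}$.

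The statement for the reduced chain $Y$ is established by exactly the same argument, replacing $X$ by $Y$, the rate $q_i(\lambda)$ by $\gamma_i$, and using that the holding time of $Y$ at a state $i\in B$ is exponential with rate $\gamma_i$, as follows from the definition of $\Gamma$ and the fact that $\gamma_i=-\gamma_{ii}$. There is no real obstacle in this proof; the only mildly delicate point is ensuring that the exponential lower bound on $P^\lambda_i(X_h=i)$ is justified by the standard pathwise description of a continuous-time Markov chain via its jump chain and exponential holding times, rather than by any computation with the matrix exponential.
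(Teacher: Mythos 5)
Your proof is correct and is essentially the argument the paper relies on: the paper simply cites the semigroup property and refers to Norris, and your conditioning at time $h$, cancelling the $k=i$ term, and bounding $1-P^\lambda_i(X_h=i)\leq 1-e^{-q_i(\lambda)h}$ via the exponential holding time is exactly that standard proof, applied verbatim to $Y$ with rate $\gamma_i$.
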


\begin{proof}
The proof of this lemma follows easily from the semigroup property of Markov chains. The detailed proof of this lemma can be found in \cite{norris1998markov}.
\end{proof}

The following lemma plays a key role in obtaining the asymptotic behavior of the Markov chain $X$. In order not to interrupt things, we defer the proof of this lemma to the final section of this paper. For convenience, we define a constant $M$ as
\begin{equation}
M = \max_{j\in B}{q_j}+\max_{j\in B}{\gamma_j}+1.
\end{equation}
\begin{lemma}\label{complicated}
Assume that there exists $\eta > 0$, such that for any $i,j\in B$,
\begin{equation}
P^\lambda_i(X_t=j)-P_i(Y_t=j) \geq -\eta.
\end{equation}
Then for any $h<1/M$ and $\epsilon>0$, there exists $\lambda(h,\epsilon)>0$, such that for any $\lambda>\lambda(h,\epsilon)$ and $i,j\in B$,
\begin{equation}
P^\lambda_i(X_{t+h}=j)-P_i(Y_{t+h}=j) \geq -\left((1+|B|Mh)\eta+4M^2h^2+\epsilon\right).
\end{equation}
\end{lemma}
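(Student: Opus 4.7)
The plan is to use Chapman–Kolmogorov at the intermediate time $h$ to invoke the hypothesis at time $t$, and then to control the short-time discrepancy between $X^\lambda$ and $Y$ on $[0,h]$ using the fast/slow structure of the chain.

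I would first write, for $i,j\in B$,
\begin{equation*}
P^\lambda_i(X_{t+h}=j)=\sum_{k\in S}P^\lambda_i(X_h=k)P^\lambda_k(X_t=j),\qquad
P_i(Y_{t+h}=j)=\sum_{k\in B}P_i(Y_h=k)P_k(Y_t=j),
\end{equation*}
split the first sum over $k\in B$ and $k\in A$, and drop the nonnegative $k\in A$ terms to obtain a lower bound. Applying the hypothesis $P^\lambda_k(X_t=j)\geq P_k(Y_t=j)-\eta$ on the $k\in B$ piece and then adding and subtracting $\sum_{k\in B}P_i(Y_h=k)P_k(Y_t=j)$ yields
\begin{equation*}
P^\lambda_i(X_{t+h}=j)-P_i(Y_{t+h}=j)\;\geq\;\sum_{k\in B}\bigl(P^\lambda_i(X_h=k)-P_i(Y_h=k)\bigr)P_k(Y_t=j)-\eta.
\end{equation*}
The problem is thereby reduced to a lower bound on the remaining sum together with a quantification of how closely the short-time kernel $P^\lambda_i(X_h=\cdot)$ on $B$ matches $P_i(Y_h=\cdot)$.

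For this reduction I would make a first-jump decomposition of each chain on $[0,h]$. For $Y$, Lemma \ref{inequality}(2) applied to the expansion of the matrix exponential gives $P_i(Y_h=k)=\delta_{ik}+\gamma_{ik}h+R_{ik}$ with $|R_{ik}|$ controlled by $M^2h^2$ for $i,k\in B$. For $X$ started at $i\in B$, the first exit time from $i$ is exponential at rate $q_i(\lambda)\to q_i$; upon exiting, the chain either goes directly to some $m\in B$ or enters $A$ and performs a fast excursion whose duration is $O(1/\lambda)$. By Lemma \ref{invertible} the exit distribution of this excursion on $B$ is given by $(I-\Omega_{AA})^{-1}\Omega_{AB}$, so the effective post-jump distribution $\tilde\omega_{im}$ satisfies $q_i\tilde\omega_{im}=\gamma_{im}$ for $m\neq i$ and $q_i(1-\tilde\omega_{ii})=\gamma_i$. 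Collapsing the geometric chain of "virtual" loops $i\to i$ through $A$ thus thins the effective exit rate from $q_i$ to $\gamma_i$, matching the generator of $Y$. Two-or-more genuine transitions in $[0,h]$ contribute an $O(h^2)$ term bounded by $4M^2h^2$ through Lemma \ref{inequality}, while the residual mass $P^\lambda_i(X_h\in A)$ and the finite-$\lambda$ gap between $\omega_{ij}(\lambda)$ and $\omega_{ij}$ both vanish as $\lambda\to\infty$ and can be absorbed into $\epsilon$ by choosing $\lambda>\lambda(h,\epsilon)$. When substituting the short-time bound into the sum, one further uses Lemma \ref{timeclose} on $Y$ to trade the per-state discrepancy for a factor $1-e^{-\gamma_k h}\leq Mh$, and summing over the $|B|$ slow states accounts for the extra $|B|Mh\,\eta$ correction that amplifies the hypothesis bound.

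The main obstacle will be this short-time kernel comparison. A naive Taylor expansion of $e^{hQ(\lambda)}$ fails because some entries of $Q(\lambda)$ diverge as $\lambda\to\infty$, so the $h^2$ remainder is not uniform in $\lambda$. The remedy is to expand only with respect to the bounded rate $q_i(\lambda)$ out of the slow state $i$, and to sum every excursion through $A$ exactly using the geometric-series representation $(I-\Omega_{AA})^{-1}=\sum_{n\geq 0}\Omega_{AA}^n$ from Lemma \ref{invertible}. This way the quadratic remainder depends only on $M$, yielding the deterministic $4M^2h^2$ term, while every finite-$\lambda$ correction collapses into the $\epsilon$ term that is killed by taking $\lambda$ sufficiently large.
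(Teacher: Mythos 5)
Your plan is sound and, at its core, uses the same machinery as the paper: a Chapman--Kolmogorov split, a decomposition by the number of jumps in a window of length $h$, exact summation of the excursions through $A$ via $(I-\Omega_{AA})^{-1}=\sum_n\Omega_{AA}^n$ truncated at a finite $N$ so that the excursion durations collapse as $\lambda\to\infty$ (Slutsky), and the scalar inequalities of Lemma \ref{inequality} for the $O(h^2)$ remainders. The genuine difference is the order of the split: you write $P^\lambda_i(X_{t+h}=j)=\sum_k P^\lambda_i(X_h=k)P^\lambda_k(X_t=j)$ and insert the hypothesis into the time-$t$ factor, so the hard work becomes a short-time kernel comparison $P^\lambda_i(X_h=\cdot)$ versus $P_i(Y_h=\cdot)$ on $B$ (i.e.\ the $t=0$, $\eta=0$ case of the lemma); the paper instead writes $P^\lambda_i(X_{t+h}=j)\geq\sum_{k\in B}P^\lambda_i(X_t=k)P^\lambda_k(X_h=j)$, does the one-step estimates on $P^\lambda_k(X_h=j)$ and $P_k(Y_h=j)$ with the target $j$ fixed, and inserts the hypothesis only at the end. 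Your ordering buys a better $\eta$-coefficient ($1$ instead of $1+|B|Mh$, which is why your closing sentence attributing an ``extra $|B|Mh\,\eta$ correction'' to Lemma \ref{timeclose} does not correspond to any step your own decomposition actually needs), but it costs you on the error bookkeeping: your remaining sum $\sum_{k\in B}(P^\lambda_i(X_h=k)-P_i(Y_h=k))P_k(Y_t=j)$ has weights $P_k(Y_t=j)\in[0,1]$ that do not sum to $1$, so if you only prove a per-state bound $P^\lambda_i(X_h=k)-P_i(Y_h=k)\geq-(cM^2h^2+\epsilon')$ and add negative parts, the quadratic term picks up a factor $|B|$, and since that excess is independent of $\lambda$ it cannot be hidden in the arbitrary $\epsilon$; you would then have proved the lemma only with $4M^2h^2$ replaced by a $|B|$-dependent constant (which, to be fair, would still suffice for Theorem \ref{stronglocal}). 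To recover the stated constant you must track the per-$k$ errors so that they sum over $k$ to $O(M^2h^2)+\epsilon$ (the multi-jump mass and the ``stay after arrival'' corrections do sum correctly, exactly as in the paper's estimates of $A_1,A_2,A_3$); the paper's ordering avoids this issue automatically because its errors are weighted by $P^\lambda_i(X_t=k)$, which sum to at most $1$.

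Two smaller points to repair: Lemma \ref{inequality}(2) is a scalar inequality, so ``applying it to the expansion of the matrix exponential'' for $P_i(Y_h=k)=[e^{\Gamma h}]_{ik}$ is not immediate — either estimate the matrix-exponential remainder directly or, as the paper does, decompose $Y$ probabilistically by its first one or two jumps, which needs only the scalar bounds. And the fast excursions are not ``of duration $O(1/\lambda)$'' under the paper's hypotheses; all that is assumed is $q_k(\lambda)\to\infty$ for $k\in A$, which is exactly why the truncation at $N$ followed by the distributional collapse of $E_{i,k_1,\dots,k_{n-1}}$ is the right formulation.
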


We are now in a position to state the main result of this section.
\begin{theorem}\label{stronglocal}
For any $T>0$ and $i,j\in B$,
\begin{equation}
\lim_{\lambda\rightarrow\infty}\sup_{0\leq t\leq T}\left|P^\lambda_i(X_t=j)-P_i(Y_t=j)\right| = 0.
\end{equation}
\end{theorem}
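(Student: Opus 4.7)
The plan is to combine the one-step improvement in Lemma~\ref{complicated} with a discrete Gr\"onwall iteration along a uniform mesh of $[0,T]$, and then interpolate to all intermediate times using Lemma~\ref{timeclose}. Fix $h\in(0,1/M)$ and $\epsilon>0$, set $t_n=nh$ for $0\leq n\leq N:=\lceil T/h\rceil$, and define $\eta_0=0$ together with the recursion $\eta_{n+1}=(1+|B|Mh)\eta_n+4M^2h^2+\epsilon$. I will prove by induction on $n$ that, for every $\lambda>\lambda(h,\epsilon)$ (the threshold produced by Lemma~\ref{complicated}) and every $i,j\in B$,
\[
P^\lambda_i(X_{t_n}=j)-P_i(Y_{t_n}=j)\geq -\eta_n.
\]
The base case $n=0$ is immediate since both probabilities equal $\delta_{ij}$, and the inductive step is exactly one application of Lemma~\ref{complicated} with the uniform hypothesis $\eta=\eta_n$. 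Solving this linear recursion and using $(1+|B|Mh)^N\leq e^{|B|MT+|B|Mh}$ yields $\eta_N\leq C(T)(h+\epsilon/h)$ for a constant $C(T)$ independent of $h$, $\epsilon$, and $\lambda$.

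The next step is to promote this lower bound into a two-sided bound. Because $Y$ lives on $B$, we have $\sum_{j\in B}P_i(Y_{t_n}=j)=1$, while $\sum_{j\in B}P^\lambda_i(X_{t_n}=j)\leq 1$. Summing the lower bound over $j\in B$ and isolating a single index therefore shows that, for every $j_0\in B$, $P^\lambda_i(X_{t_n}=j_0)-P_i(Y_{t_n}=j_0)\leq(|B|-1)\eta_N$, and hence $|P^\lambda_i(X_{t_n}=j)-P_i(Y_{t_n}=j)|\leq(|B|-1)\eta_N$ uniformly for $i,j\in B$ on the mesh. To pass from mesh points to arbitrary $t\in[t_n,t_{n+1}]$, I will insert $P^\lambda_i(X_{t_n}=j)$ and $P_i(Y_{t_n}=j)$ by the triangle inequality and apply Lemma~\ref{timeclose} twice. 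Since $i\in B$, $q_i(\lambda)\to q_i<\infty$ as $\lambda\to\infty$, so the correction terms $1-e^{-q_i(\lambda)h}$ and $1-e^{-\gamma_ih}$ are each bounded by $Mh$ once $\lambda$ is sufficiently large.

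Given $\delta>0$, the parameters can then be chosen in the canonical order: first pick $h$ small enough that $(|B|-1)C(T)h+2Mh<\delta/2$, next pick $\epsilon$ small enough that $(|B|-1)C(T)\epsilon/h<\delta/2$, and finally take $\lambda>\lambda(h,\epsilon)$ large enough that the extra slack in $q_i(\lambda)$ is absorbed. This will yield $\sup_{0\leq t\leq T}|P^\lambda_i(X_t=j)-P_i(Y_t=j)|<\delta$ and prove the theorem. The genuine analytic difficulty has been packaged into Lemma~\ref{complicated}; the obstacle at the present level is the dual role of $h$, in that shrinking $h$ reduces the single-step error $4M^2h^2+\epsilon$ but forces $N\sim T/h$ iterations. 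The argument therefore relies crucially on the Gr\"onwall-type factor $(1+|B|Mh)^N\leq e^{|B|MT}$ remaining bounded uniformly as $h\to 0$, which is exactly what the multiplicative structure of the estimate in Lemma~\ref{complicated} is tailored to provide.
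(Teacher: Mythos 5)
Your proposal is correct and follows essentially the same route as the paper's own proof: iterate Lemma~\ref{complicated} along a mesh of step $h$ starting from $\eta_0=0$ (a discrete Gr\"onwall bound with factor $(1+|B|Mh)^{N}\leq e^{|B|MT}$), upgrade the one-sided bound to a two-sided one using $\sum_{j\in B}P_i(Y_{t_n}=j)=1$, and interpolate off the mesh via Lemma~\ref{timeclose}. The only difference is bookkeeping — the paper couples the parameters by setting $h=\epsilon/M$ and feeding $\epsilon^2$ into Lemma~\ref{complicated}, whereas you keep $h$ and $\epsilon$ independent and choose them in the order $h$, then $\epsilon$, then $\lambda$ — which is equally valid.
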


\begin{proof}
In the following proof, we fix $T>0$ and $i,j\in B$. For any $0<\epsilon<1-1/e$, let $h=\epsilon/M$. By Lemma \ref{inequality}, we have
\begin{equation}
h \leq \frac{-\log(1-\epsilon)}{M} < \frac{1}{M}.
\end{equation}
When $\lambda$ is sufficiently large, we have $q_i(\lambda)\leq M$ and $\gamma_i\leq M$. This implies that
\begin{equation}
1-e^{-q_i(\lambda)h}\leq\epsilon,\;\;\;1-e^{-\gamma_ih}\leq\epsilon.
\end{equation}
For any $0\leq t\leq T$, let $n = [t/h]$. It is easy to see that $0\leq t-nh \leq h$. By Lemma \ref{timeclose}, we have
\begin{equation}\label{discrete1}
\begin{split}
\left|P^\lambda_i(X_t=j)-P^\lambda_i(X_{nh}=j)\right| &\leq 1-e^{-q_i(\lambda)h}\leq\epsilon, \\
\left|P_i(Y_t=j)-P_i(Y_{nh}=j)\right| &\leq 1-e^{-\gamma_ih}\leq\epsilon.
\end{split}
\end{equation}
Note that $P^\lambda_i(X_0=j)-P_i(Y_0=j)=\delta_{ij}-\delta_{ij}=0$. Taking $\eta = 0$ and $t = 0$ in Lemma \ref{complicated}, we see that there exists $\lambda(\epsilon)>0$, such that for any $\lambda>\lambda(\epsilon)$,
\begin{equation}
P^\lambda_i(X_h=j)-P_i(Y_h=j) \geq -(4M^2h^2+\epsilon^2) = -5\epsilon^2.
\end{equation}
Using Lemma \ref{complicated} repeatedly, we see that for any $\lambda>\lambda(\epsilon)$,
\begin{equation}
\begin{split}
P^\lambda_i(X_{nh}=j)-P_i(Y_{nh}=j) &\geq -\left\{1+(1+|B|Mh)+\cdots+(1+|B|Mh)^{n-1}\right\}5\epsilon^2 \\
&= -\frac{(1+|B|Mh)^n-1}{|B|Mh}5\epsilon^2.
\end{split}
\end{equation}
Since $t\leq T$, we have $n\leq T/h$. By Lemma \ref{inequality}, it follows that
\begin{equation}
\begin{split}
\frac{(1+|B|Mh)^n-1}{|B|Mh}5\epsilon^2 &\leq \frac{(1+|B|Mh)^\frac{T}{h}-1}{|B|Mh}5\epsilon^2
\leq \frac{(1+|B|Mh)^\frac{|B|MT}{|B|Mh}-1}{|B|Mh}5\epsilon^2 \\
&\leq \frac{e^{|B|MT}}{|B|\epsilon}5\epsilon^2 = M_0\epsilon,
\end{split}
\end{equation}
where $M_0 = 5e^{|B|MT}/|B|$ is a positive constant. Thus we have
\begin{equation}
P^\lambda_i(X_{nh}=j)-P_i(Y_{nh}=j) \geq -M_0\epsilon.
\end{equation}
This implies that
\begin{equation}
\begin{split}
P^\lambda_i(X_{nh}=j) &\leq 1 - \sum_{k\in B\atop k\neq j}P^\lambda_i(X_{nh}=k) \leq 1-\sum_{k\in B\atop k\neq j}(P_i(Y_{nh}=k)-M_0\epsilon) \\
&\leq P_i(Y_{nh}=j) + |B|M_0\epsilon.
\end{split}
\end{equation}
Thus we obtain that
\begin{equation}\label{discrete2}
|P^\lambda_i(X_{nh}=j)-P_i(Y_{nh}=j)| \leq |B|M_0\epsilon.
\end{equation}
In view of \eqref{discrete1} and \eqref{discrete2}, when $\lambda$ is sufficiently large, for any $0\leq t\leq T$,
\begin{eqnarray*}
&& |P^\lambda_i(X_t=j)-P_i(Y_t=j)| \\
&\leq& |P^\lambda_i(X_t=j)-P^\lambda_i(X_{nh}=j)|+|P^\lambda_i(X_{nh}=j)-P_i(Y_{nh}=j)| \\
&& +|P_i(Y_{nh}=j)-P_i(Y_t=j)| \\
&\leq& \epsilon+|B|M_0\epsilon+\epsilon = (|B|M_0+2)\epsilon.
\end{eqnarray*}
This implies the result of this theorem.
\end{proof}

\begin{definition}
Let $\mu$ and $\nu$ be two probability measures on the state space $S$. Then the total variation distance between $\mu$ and $\nu$ is defined as
\begin{equation}
d_{TV}(\mu,\nu) = \frac{1}{2}\sum_{i\in S}|\mu_i-\nu_i|.
\end{equation}
\end{definition}
It is straightforward to check that the set of all probability measures on $S$ is a complete metric space under the total variation distance. The following result is a direct corollary of Theorem \ref{stronglocal}.

\begin{corollary}\label{dtvlocal}
Let $\pi$ be a probability distribution concentrated on the slow state space $B$. Then for any $T>0$,
\begin{equation}
\lim_{\lambda\rightarrow\infty}\sup_{0\leq t\leq T}d_{TV}(P^\lambda_{\pi}(X_t\in\cdot),P_{\pi}(Y_t\in\cdot)) = 0.
\end{equation}
\end{corollary}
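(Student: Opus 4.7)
The plan is to derive the corollary as a direct consequence of Theorem~\ref{stronglocal} by combining the pointwise-in-state convergence with the finiteness of $B$ and conservation of total probability. First I would expand the total variation distance using its definition,
\begin{equation*}
d_{TV}(P^\lambda_{\pi}(X_t\in\cdot),P_{\pi}(Y_t\in\cdot)) = \frac{1}{2}\sum_{j\in S}|P^\lambda_\pi(X_t=j)-P_\pi(Y_t=j)|,
\end{equation*}
and split the sum according to whether $j\in B$ or $j\in A$. Since the reduced chain $Y$ lives on $B$, the term $P_\pi(Y_t=j)$ vanishes whenever $j\in A$, so the right-hand side reduces to
\begin{equation*}
\frac{1}{2}\sum_{j\in B}|P^\lambda_\pi(X_t=j)-P_\pi(Y_t=j)| \;+\; \frac{1}{2}\sum_{j\in A}P^\lambda_\pi(X_t=j).
\end{equation*}

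Next, because $\pi$ is concentrated on $B$, I can rewrite $P^\lambda_\pi(X_t=j) = \sum_{i\in B}\pi_i\,P^\lambda_i(X_t=j)$ and analogously $P_\pi(Y_t=j) = \sum_{i\in B}\pi_i\,P_i(Y_t=j)$. Taking absolute values inside, each $(i,j)$-term is bounded by $\pi_i\,|P^\lambda_i(X_t=j)-P_i(Y_t=j)|$, and Theorem~\ref{stronglocal} makes this quantity go to $0$ uniformly in $t\in[0,T]$. Since $B\times B$ is a finite index set, the finite sum of such terms also vanishes uniformly in $t\in[0,T]$ as $\lambda\to\infty$, which handles the first of the two pieces above.

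For the second piece I would exploit that probabilities sum to $1$:
\begin{equation*}
\sum_{j\in A}P^\lambda_\pi(X_t=j) \;=\; 1-\sum_{j\in B}P^\lambda_\pi(X_t=j) \;=\; \sum_{j\in B}\bigl(P_\pi(Y_t=j)-P^\lambda_\pi(X_t=j)\bigr),
\end{equation*}
using that $\sum_{j\in B}P_\pi(Y_t=j)=1$. The absolute value of this expression is bounded by $\sum_{j\in B}|P^\lambda_\pi(X_t=j)-P_\pi(Y_t=j)|$, which by the argument above tends to $0$ uniformly on $[0,T]$. Adding the two bounds delivers the claim.

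There is no genuine obstacle here: the only thing to be careful about is that the convergence must be uniform in $t\in[0,T]$, but since $B$ is finite and only finitely many pairs $(i,j)\in B\times B$ appear, the uniform-in-$t$ statement of Theorem~\ref{stronglocal} is preserved under the finite sums, and the $j\in A$ contribution is controlled by the same quantity via conservation of mass.
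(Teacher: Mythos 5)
Your proposal is correct and follows essentially the same route as the paper: bound the $B$-state terms by Theorem~\ref{stronglocal} together with finiteness of $B$, and control the $A$-state mass via conservation of probability, expressing $\sum_{j\in A}P^\lambda_\pi(X_t=j)$ through the $B$-state discrepancies. The only cosmetic difference is that you run the mass-conservation step for the mixed law $P^\lambda_\pi$ directly, whereas the paper does it for each initial state $i\in B$ before averaging over $\pi$; the substance is identical.
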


\begin{proof}
For any $i\in B$ and $j\notin B$,
\begin{equation}
\begin{split}
& |P^\lambda_i(X_t=j)-P_i(Y_t=j)| = P^\lambda_i(X_t=j) \leq 1-\sum_{k\in B}P^\lambda_i(X_t=k) \\
&= \sum_{k\in B}(P_i(Y_t=k)-P^\lambda_i(X_t=k)) \leq \sum_{k\in B}|P^\lambda_i(X_t=k)-P_i(Y_t=k)|.
\end{split}
\end{equation}
This shows that
\begin{equation}
\sup_{0\leq t\leq T}|P^\lambda_i(X_t=j)-P_i(Y_t=j)| \leq \sum_{k\in B}\sup_{0\leq t\leq T}|P^\lambda_i(X_t=k)-P_i(Y_t=k)|.
\end{equation}
By Theorem \ref{stronglocal}, it is easy to that for any $i\in B$ and $j\in S$,
\begin{equation}\label{cor1}
\lim_{\lambda\rightarrow\infty}\sup_{0\leq t\leq T}|P^\lambda_i(X_t=j)-P_i(Y_t=j)| =0.
\end{equation}
Moreover, we have
\begin{equation}
\begin{split}
& d_{TV}(P^\lambda_{\pi}(X_t\in\cdot),P_{\pi}(Y_t\in\cdot)) = \frac{1}{2}\sum_{j\in S}|P^\lambda_{\pi}(X_t=j)-P_{\pi}(Y_t=j)| \\
&\leq \frac{1}{2}\sum_{i\in B}\sum_{j\in S}\pi_i|P^\lambda_i(X_t=j)-P_i(Y_t=j)|.
\end{split}
\end{equation}
Thus we obtain that
\begin{equation}\label{cor2}
\sup_{0\leq t\leq T}d_{TV}(P^\lambda_{\pi}(X_t\in\cdot),P_{\pi}(Y_t\in\cdot)) \leq \frac{1}{2}\sum_{i\in B}\sum_{j\in S}\pi_i\sup_{0\leq t\leq T}|P^\lambda_i(X_t=j)-P_i(Y_t=j)|.
\end{equation}
In view of \eqref{cor1} and \eqref{cor2}, we obtain the result of this corollary.
\end{proof}

The above corollary suggests that if the initial distribution of the original chain $X$ is concentrated on the slow state space $B$, then the distribution of the original chain $X$ will be very closed to that of the reduced chain $Y$ over any finite time interval when $\lambda$ is sufficiently large. One may ask whether the fixed time $T>0$ in Theorem \ref{stronglocal} can be replaced by infinity, that is, whether for any $i,j\in B$,
\begin{equation}\label{appglobal}
\lim_{\lambda\rightarrow\infty}\sup_{t\geq 0}\left|P^\lambda_i(X_t=j)-P_i(Y_t=j)\right| = 0.
\end{equation}
In general, the answer to this question is false. This phenomenon is quite similar to the continuous dependence on the initial value of the solution to an ordinary differential equation, where we can only prove that for any fixed time $T>0$, the solutions up to time $T$ are close to each other if the initial values are close enough. The next example shows that \eqref{appglobal} in general does not hold.
\begin{figure}[!htb]
\begin{center}
\centerline{\includegraphics[width=0.25\textwidth]{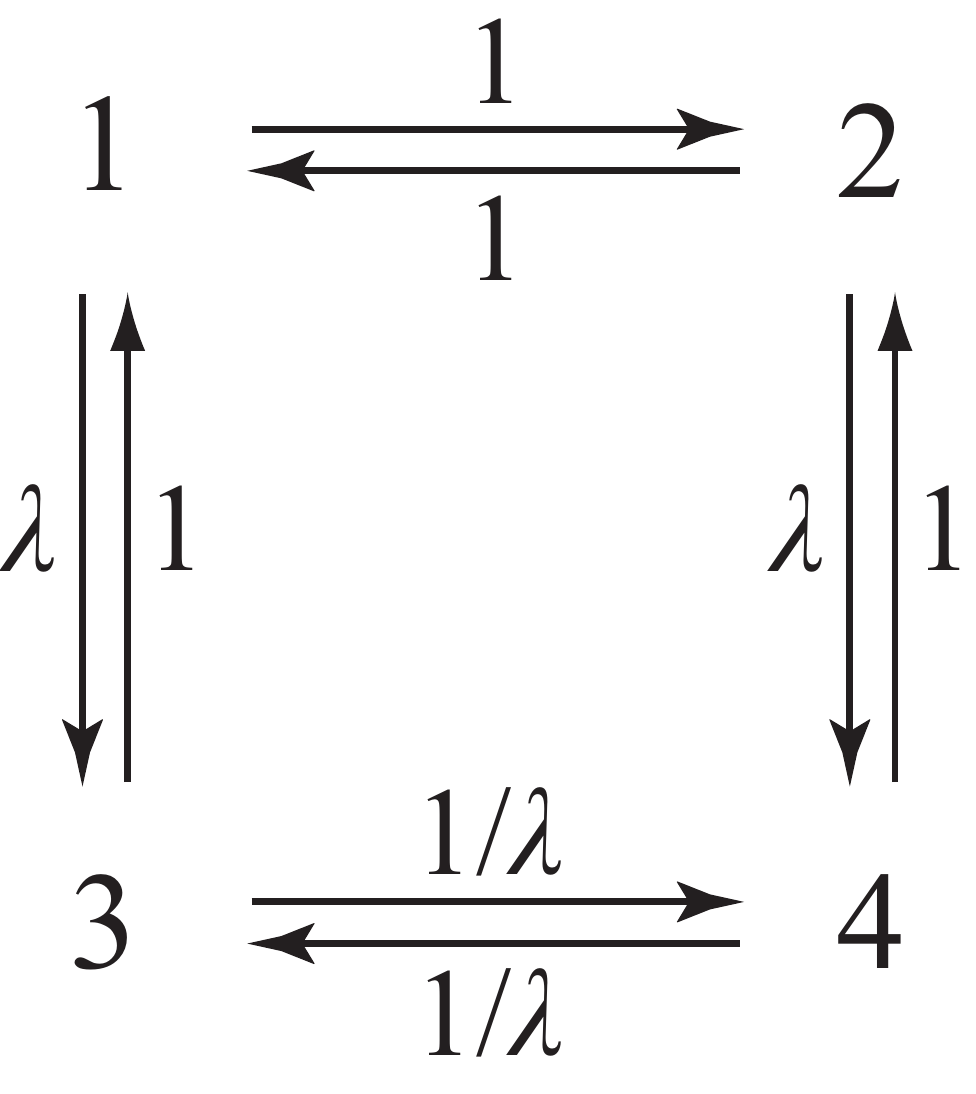}}
\caption{A counterexample showing that the fixed time $T$ in Theorem \ref{stronglocal} in general cannot be replaced by $\infty$.}\label{counterexample}
\end{center}
\end{figure}

\begin{example}\label{counter}
Consider the Markov chain $X$ illustrated in Figure \ref{counterexample}.
According to Definition \ref{states}, the fast state space is $A = \{1,2\}$ and the slow state space is $B = \{3,4\}$. In order to calculate the transition rate matrix of the reduced chain $Y$, we first need to obtain $Q(\lambda)$ and $\Omega(\lambda)$. It is easy to see that
\begin{equation}
Q(\lambda) =
\begin{pmatrix}
-(\lambda+1) & 1 & \lambda & 0 \\
1 & -(\lambda+1) & 0 & \lambda \\
1 & 0 & -(\frac{1}{\lambda}+1) & \frac{1}{\lambda} \\
0 & 1 & \frac{1}{\lambda} & -(\frac{1}{\lambda}+1) \\
 \end{pmatrix}.
\end{equation}
and
\begin{equation}
\Omega(\lambda) =
\begin{pmatrix}
0 & \frac{1}{\lambda+1} & \frac{\lambda}{\lambda+1} & 0 \\
\frac{1}{\lambda+1} & 0 & 0 & \frac{\lambda}{\lambda+1} \\
\frac{\lambda}{\lambda+1} & 0 & 0 & \frac{1}{\lambda+1} \\
0 & \frac{\lambda}{\lambda+1} & \frac{1}{\lambda+1} & 0 \\
 \end{pmatrix}.
\end{equation}
Thus $Q_{BB} = \lim_{\lambda\rightarrow\infty}Q_{BB}(\lambda) = -I$, $Q_{BA} = \lim_{\lambda\rightarrow\infty}Q_{BA}(\lambda) = I$, $\Omega_{AA} = \lim_{\lambda\rightarrow\infty}\Omega_{AA}(\lambda) = 0$, and $\Omega_{AB} = \lim_{\lambda\rightarrow\infty}\Omega_{AB}(\lambda) = I$. Thus the transition rate matrix $\Gamma$ of the reduced chain $Y$ is
\begin{equation}
\Gamma =  Q_{BB}+Q_{BA}(I-\Omega_{AA})^{-1}\Omega_{AB} = -I+I = 0.
\end{equation}
This shows that the reduced chain $Y$ is a constant process. Particularly, we have $P_3(Y_t = 4) = 0$ for any $t\geq 0$. On the other hand, it is easy to see that the invariant distribution $\mu(\lambda)$ of the original chain $X$ is
\begin{equation}
\mu(\lambda) = \left(\frac{1}{2(\lambda+1)},\frac{1}{2(\lambda+1)},\frac{\lambda}{2(\lambda+1)},\frac{\lambda}{2(\lambda+1)}\right).
\end{equation}
By the convergence theorem of irreducible Markov chains, we have
\begin{equation}
\lim_{t\rightarrow\infty}P^\lambda_3(X_t = 4) = \mu_4(\lambda) = \frac{\lambda}{2(\lambda+1)}.
\end{equation}
Thus we obtain that
\begin{equation}
\lim_{\lambda\rightarrow\infty}\lim_{t\rightarrow\infty}\left|P^\lambda_3(X_t = 4)- P_3(Y_t = 4)\right| = \frac{1}{2}.
\end{equation}
If \eqref{appglobal} holds, then we must have
\begin{equation}
\lim_{\lambda\rightarrow\infty}\lim_{t\rightarrow\infty}\left|P^\lambda_3(X_t = 4)- P_3(Y_t = 4)\right| = 0.
\end{equation}
This shows that \eqref{appglobal} is not satisfied in our current example. We can only prove that the distributions of the original chain $X$ and the reduced chain $Y$ are close to each other over any finite time interval when $\lambda$ is sufficiently large, but we fail to conclude that their distributions are close to each other over the whole time axis.
\end{example}

\section{Reduction of the Markov chain over the whole time axis}
In example \ref{counter}, the reduced chain $Y$ has more than one recurrent class. The fact that different recurrent classes cannot communicate with each other results in the strange phenomenon that for some $i,j\in B$,
\begin{equation}
\lim_{\lambda\rightarrow\infty}\lim_{t\rightarrow\infty}\left|P_i^\lambda(X_t = j)- P_i(Y_t = j)\right| \neq 0.
\end{equation}
This then raises a natural question: if the reduced chain $Y$ is irreducible, can we prove that for any $i,j\in B$,
\begin{equation}
\lim_{\lambda\rightarrow\infty}\sup_{t\geq 0}\left|P^\lambda_i(X_t=j)-P_i(Y_t=j)\right| = 0?
\end{equation}
Interestingly, the answer to this question is affirmative. To prove this fact, we first prove the following theorem, which shows that if the reduced chain $Y$ is irreducible, then the invariant distribution of the original chain $X$ will converge to that of the reduced chain $Y$ as $\lambda\rightarrow\infty$.

\begin{theorem}\label{invconverge}
Assume that the reduced chain $Y$ is irreducible. Let $\mu(\lambda)=(\mu_A(\lambda),\mu_B(\lambda))$ be the invariant distribution of the Markov chain $X$ and let $\mu_B$ be the invariant distribution of the reduced chain $Y$. Then $\lim_{\lambda\rightarrow\infty}\mu_B(\lambda) = \mu_B$ and $\lim_{\lambda\rightarrow\infty}\mu_A(\lambda) = 0$.
\end{theorem}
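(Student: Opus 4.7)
The plan is to exploit the block structure of the invariance equation $\mu(\lambda) Q(\lambda) = 0$ together with the factorizations
\[
Q_{AA}(\lambda) = D_A(\lambda)(\Omega_{AA}(\lambda) - I), \qquad Q_{AB}(\lambda) = D_A(\lambda)\Omega_{AB}(\lambda),
\]
where $D_A(\lambda) = \diag(q_i(\lambda):i\in A)$, in order to isolate the rescaled row vector $\tilde\mu_A(\lambda) := \mu_A(\lambda) D_A(\lambda)$. The point is that $\mu_A(\lambda)$ itself is tiny but $\tilde\mu_A(\lambda)$ remains bounded, and it is the latter that carries the relevant information about the fast states in the limit.

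First I would write $\mu(\lambda)Q(\lambda)=0$ blockwise as
\[
\mu_A(\lambda) Q_{AA}(\lambda) + \mu_B(\lambda) Q_{BA}(\lambda) = 0, \qquad \mu_A(\lambda) Q_{AB}(\lambda) + \mu_B(\lambda) Q_{BB}(\lambda) = 0.
\]
Inserting the factorization of $Q_{AA}(\lambda)$ into the first equation and using Lemma \ref{invertible} together with $\Omega_{AA}(\lambda)\to\Omega_{AA}$ (so $(I-\Omega_{AA}(\lambda))^{-1}$ is well defined for large $\lambda$ with a finite limit) gives
\[
\tilde\mu_A(\lambda) = \mu_B(\lambda) Q_{BA}(\lambda) (I-\Omega_{AA}(\lambda))^{-1}.
\]
Since $Q_{BA}(\lambda)\to Q_{BA}$ is finite and $\|\mu_B(\lambda)\|_1\leq 1$, the vector $\tilde\mu_A(\lambda)$ is uniformly bounded in $\lambda$. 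But $\mu_A(\lambda) = \tilde\mu_A(\lambda) D_A(\lambda)^{-1}$ and all diagonal entries of $D_A(\lambda)^{-1}$ tend to $0$, so $\mu_A(\lambda)\to 0$; normalization of $\mu(\lambda)$ then forces $\sum_{i\in B}\mu_i(\lambda)\to 1$.

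Next I would substitute $Q_{AB}(\lambda) = D_A(\lambda)\Omega_{AB}(\lambda)$ into the second block equation to turn it into $\tilde\mu_A(\lambda)\Omega_{AB}(\lambda) + \mu_B(\lambda) Q_{BB}(\lambda) = 0$, and then eliminate $\tilde\mu_A(\lambda)$ via the previous display, obtaining
\[
\mu_B(\lambda)\bigl[Q_{BB}(\lambda)+Q_{BA}(\lambda)(I-\Omega_{AA}(\lambda))^{-1}\Omega_{AB}(\lambda)\bigr] = 0.
\]
By compactness of the set of subprobability vectors on $B$, any sequence $\lambda_n\to\infty$ admits a subsequence along which $\mu_B(\lambda_n)$ converges to some $\mu_B^*$, which is a probability distribution on $B$ (since $\|\mu_B(\lambda)\|_1\to 1$) and, by passing to the limit in the previous display, satisfies $\mu_B^*\Gamma = 0$. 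Irreducibility of $Y$ forces $\mu_B^* = \mu_B$, the unique invariant distribution of $\Gamma$. Uniqueness of the subsequential limit promotes this to convergence of the full family $\mu_B(\lambda)\to \mu_B$.

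The main obstacle is purely formal but unavoidable: because $Q_{AA}(\lambda)$ and $Q_{AB}(\lambda)$ contain entries that diverge to infinity, one cannot take $\lambda\to\infty$ directly in $\mu(\lambda)Q(\lambda)=0$. The rescaling $\tilde\mu_A(\lambda)=\mu_A(\lambda)D_A(\lambda)$ — which is exactly the combination that also appears implicitly in the definition of $\Gamma$ — absorbs the divergence and produces a bounded auxiliary quantity whose limit encodes the reduced dynamics on $B$. After this device is in place, the remainder of the argument is continuity plus uniqueness of the invariant distribution of an irreducible finite-state Markov chain.
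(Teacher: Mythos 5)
Your argument is correct; the issue is only how it compares with the paper's route. Both proofs rest on the same algebraic engine, namely the factorization \eqref{omega}, $Q_{AA}(\lambda)=-M_A(\lambda)(I-\Omega_{AA}(\lambda))$ and $Q_{AB}(\lambda)=M_A(\lambda)\Omega_{AB}(\lambda)$ (your $D_A(\lambda)$ is the paper's $M_A(\lambda)$), and on the Schur-complement cancellation $-Q_{BA}(\lambda)Q_{AA}(\lambda)^{-1}Q_{AB}(\lambda)=Q_{BA}(\lambda)(I-\Omega_{AA}(\lambda))^{-1}\Omega_{AB}(\lambda)$ together with $M_A(\lambda)^{-1}\to 0$. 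The endgames differ, though. The paper augments $Q(\lambda)$ by replacing its rightmost column with ones, writes $\mu_B(\lambda)=(0,\cdots,0,1)\bigl(Q^c_{BB}(\lambda)-Q_{BA}(\lambda)Q_{AA}(\lambda)^{-1}Q^c_{AB}(\lambda)\bigr)^{-1}$ via block inversion, and passes to the limit using continuity of matrix inversion; irreducibility of $Y$ enters through the invertibility of $\Gamma^c$, and $\mu_A(\lambda)\to 0$ is deduced last from normalization. You instead eliminate the rescaled mass $\tilde\mu_A(\lambda)=\mu_A(\lambda)M_A(\lambda)$ between the two block stationarity equations, obtain $\mu_A(\lambda)\to 0$ first (with an explicit rate of order $\max_{i\in A}q_i(\lambda)^{-1}$), show $\mu_B(\lambda)$ annihilates a matrix converging to $\Gamma$, and conclude by compactness plus uniqueness of the invariant distribution of the irreducible chain $\Gamma$; irreducibility enters only there. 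The paper's explicit formula buys a closed-form expression for $\mu_B(\lambda)$ and avoids any subsequence extraction, while your version avoids the augmented-column device and the block-inversion formula and reads more transparently as a perturbed stationarity equation. Both routes need $I-\Omega_{AA}(\lambda)$ (equivalently $Q_{AA}(\lambda)$) invertible for the relevant $\lambda$, which your continuity remark based on Lemma \ref{invertible}, or alternatively the standing irreducibility of $Q(\lambda)$, supplies; so there is no gap, just a different finishing mechanism.
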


\begin{proof}
Let $M_A(\lambda) = \diag(q_1(\lambda),\cdots,q_{|A|}(\lambda))$ be the diagonal matrix whose diagonal elements are $q_1(\lambda),\cdots,q_{|A|}(\lambda)$, respectively. Note that $q_{ij}(\lambda) = q_i(\lambda)\omega_{ij}(\lambda)$ for any $i\neq j$ and $q_{ii}(\lambda) = -q_i(\lambda)$ for any $i\in S$. Thus for any $i,j\in S$,
\begin{equation}
q_{ij}(\lambda) = q_i(\lambda)(\omega_{ij}(\lambda)-\delta_{ij}).
\end{equation}
This implies that
\begin{equation}\label{omega}
Q_{AB}(\lambda) = M_A(\lambda)\Omega_{AB}(\lambda),\;\;\;
Q_{AA}(\lambda) = -M_A(\lambda)(I-\Omega_{AA}(\lambda)).
\end{equation}
Let $Q^c(\lambda)$ be the matrix obtained from $Q(\lambda)$ by changing all elements in the rightmost column of $Q(\lambda)$ to 1. Then $Q^c(\lambda)$ can be represented as the block matrix
\begin{equation}
Q^c(\lambda)  =
\begin{pmatrix}
Q_{AA}(\lambda) & Q^c_{AB}(\lambda) \\ Q_{BA}(\lambda) & Q^c_{BB}(\lambda)
\end{pmatrix},
\end{equation}
where $Q^c_{AB}(\lambda)$ and $Q^c_{BB}(\lambda)$ are matrices obtained from $Q_{AB}(\lambda)$ and $Q_{BB}(\lambda)$ by changing all elements in the rightmost column of $Q_{AB}(\lambda)$ and $Q_{BB}(\lambda)$ to 1, respectively. Since the Markov chain $X$ is irreducible, the matrix $Q^c(\lambda)$ is invertible. Note that $\mu(\lambda)Q^c(\lambda) = (0,\cdots,0,1)$. Thus we have
\begin{equation}
\mu(\lambda) = (0,\cdots,0,1)(Q^c(\lambda))^{-1}.
\end{equation}
By the formula of inversion of block matrices, we obtain that
\begin{equation}
\mu_B(\lambda) = (0,\cdots,0,1)(Q^c_{BB}(\lambda)-Q_{BA}(\lambda)Q_{AA}(\lambda)^{-1}Q^c_{AB}(\lambda))^{-1}.
\end{equation}
Similarly, let $\Gamma^c$ be the matrix obtained from $\Gamma$ by changing all elements in the rightmost column of $\Gamma$ to 1. Since the reduced chain $Y$ is assumed to be irreducible, the matrix $\Gamma^c$ is invertible. Note that $\mu_B\Gamma^c = (0,\cdots,0,1)$. Thus we have
\begin{equation}
\mu_B = (0,\cdots,0,1)(\Gamma^c)^{-1}.
\end{equation}
In order to prove that $\lim_{\lambda\rightarrow\infty}\mu_B(\lambda)=\mu_B$, it suffices to prove that
\begin{equation}
\lim_{\lambda\rightarrow\infty}Q^c_{BB}(\lambda)-Q_{BA}(\lambda)Q_{AA}(\lambda)^{-1}Q^c_{AB}(\lambda) = \Gamma^c.
\end{equation}
To this end, we only need to prove the following two equalities:
\begin{equation}
\begin{split}
&\lim_{\lambda\rightarrow\infty}Q_{BB}(\lambda)-Q_{BA}(\lambda)Q_{AA}(\lambda)^{-1}Q_{AB}(\lambda) = \Gamma, \\
&\lim_{\lambda\rightarrow\infty}1-Q_{BA}(\lambda)Q_{AA}(\lambda)^{-1}1 = 1.
\end{split}
\end{equation}
To prove the first equality, note that
\begin{equation}
\begin{split}
& \lim_{\lambda\rightarrow\infty}Q_{BB}(\lambda)-Q_{BA}(\lambda)Q_{AA}(\lambda)^{-1}Q_{AB}(\lambda) \\
&= \lim_{\lambda\rightarrow\infty}Q_{BB}(\lambda)+ Q_{BA}(\lambda)(I-\Omega_{AA}(\lambda))^{-1}M_A(\lambda)^{-1}M_A(\lambda)\Omega_{AB}(\lambda) \\
&= Q_{BB}+Q_{BA}(I-\Omega_{AA})^{-1}\Omega_{AB} = \Gamma.
\end{split}
\end{equation}
To prove the second equality, note that $\lim_{\lambda\rightarrow\infty}M_A(\lambda)^{-1} = 0$. Thus we obtain that
\begin{equation}
\lim_{\lambda\rightarrow\infty}1-Q_{BA}(\lambda)Q_{AA}(\lambda)^{-1}1 = \lim_{\lambda\rightarrow\infty}1+Q_{BA}(\lambda)(I-\Omega_{AA}(\lambda))^{-1}M_A(\lambda)^{-1}1 = 1.
\end{equation}
Therefore, we have proved that $\lim_{\lambda\rightarrow\infty}\mu_B(\lambda)=\mu_B$. Note that
\begin{equation}
\lim_{\lambda\rightarrow\infty}\mu_A(\lambda)1 = \lim_{\lambda\rightarrow\infty}1-\mu_B(\lambda)1 = 1-\mu_B1 = 0.
\end{equation}
This implies that $\lim_{\lambda\rightarrow\infty}\mu_A(\lambda)=0$.
\end{proof}

The next lemma will be proved in Section \ref{general}. We only state the result here.
\begin{lemma}\label{point}
Let $\pi=(\pi_A,\pi_B)$ be a probability distribution on the state space $S$. Then for any $h>0$ and $j\in S$,
\begin{equation}
\lim_{\lambda\rightarrow\infty}P^\lambda_\pi(X_h=j) = P_{\gamma(\pi)}(Y_h=j),
\end{equation}
where
\begin{equation}
\gamma(\pi) = \pi_B+\pi_A(I-\Omega_{AA})^{-1}\Omega_{AB}
\end{equation}
is a probability distribution on the slow state space $B$.
\end{lemma}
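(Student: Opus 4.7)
The plan is to decompose $P^\lambda_\pi(X_h=j) = \sum_{i\in B}\pi_i P^\lambda_i(X_h=j) + \sum_{i\in A}\pi_i P^\lambda_i(X_h=j)$ and analyze the two sums separately. For the $i\in B$ contribution, Theorem \ref{stronglocal}---extended from $j\in B$ to $j\in A$ by the same sandwich argument used at the start of the proof of Corollary \ref{dtvlocal}---gives $P^\lambda_i(X_h=j)\to P_i(Y_h=j)$; summing against $\pi_i$ for $i\in B$ yields the $\pi_B$ part of $\gamma(\pi)$.

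The substantive work is the second sum. Fix $i\in A$; I aim to show
\[
\lim_{\lambda\to\infty} P^\lambda_i(X_h=j) = \sum_{k\in B} \bigl[(I-\Omega_{AA})^{-1}\Omega_{AB}\bigr]_{ik}\, P_k(Y_h=j).
\]
Let $\xi$ denote the jump chain of $X$ under $Q(\lambda)$, let $T_B^\lambda = \inf\{n\ge 0 : \xi_n\in B\}$, and let $\tau_B^\lambda$ be the corresponding continuous first-entry time of $B$. Since $A$ is finite and $q_k(\lambda)\to\infty$ for every $k\in A$, the holding times in $A$ all shrink uniformly; together with the tightness of $T_B^\lambda$ in $\lambda$ (which follows from the entrywise convergence $\Omega_{AA}(\lambda)^n\to\Omega_{AA}^n$ and the finiteness of $\sum_n\Omega_{AA}^n$ guaranteed by Lemma \ref{invertible}), this yields $\tau_B^\lambda\to 0$ in probability. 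The strong Markov property at $\tau_B^\lambda$ then produces
\[
P^\lambda_i(X_h=j) = \sum_{k\in B} E^\lambda_i\!\bigl[ P^\lambda_k(X_{h-\tau_B^\lambda}=j)\,\mathbf{1}_{\{\xi_{T_B^\lambda}=k,\,\tau_B^\lambda\le h\}} \bigr] + P^\lambda_i(X_h=j,\,\tau_B^\lambda>h),
\]
and the last term vanishes because $P^\lambda_i(\tau_B^\lambda>h)\to 0$. The embedded hitting distribution $P^\lambda_i(\xi_{T_B^\lambda}=k) = \sum_n [\Omega_{AA}(\lambda)^n\Omega_{AB}(\lambda)]_{ik}$ converges to $[(I-\Omega_{AA})^{-1}\Omega_{AB}]_{ik}$ by dominated convergence applied termwise.

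The main obstacle is passing to the limit inside the expectation, which couples three $\lambda$-dependent quantities: the stopping time $\tau_B^\lambda$, the entering state $\xi_{T_B^\lambda}$, and the function $s\mapsto P^\lambda_k(X_{h-s}=j)$ whose very form depends on $\lambda$. Once $\tau_B^\lambda\to 0$ in probability has been established, Slutsky's theorem delivers the joint convergence $(\xi_{T_B^\lambda},\tau_B^\lambda)\Rightarrow(\eta_{T_B},0)$, where $\eta$ is the limiting jump chain with transition matrix $\Omega$. Combining this with the uniform convergence $P^\lambda_k(X_t=j)\to P_k(Y_t=j)$ on $[0,h]$ from Theorem \ref{stronglocal} and the continuity of the limit at $h$, a bounded-convergence argument turns each summand into $[(I-\Omega_{AA})^{-1}\Omega_{AB}]_{ik}\,P_k(Y_h=j)$. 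Summing the $i\in A$ and $i\in B$ contributions then recovers $\sum_{k\in B}\gamma(\pi)_k\,P_k(Y_h=j)=P_{\gamma(\pi)}(Y_h=j)$, as required.
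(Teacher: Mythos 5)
Your argument is correct, but it is organized differently from the paper's. The paper does not prove Lemma \ref{point} directly: it derives it as an immediate special case of Theorem \ref{weaklocal}, whose proof restarts the chain at a small \emph{deterministic} time $t(\lambda,\epsilon)$ and rests on the total-variation comparison of $P^\lambda_\pi(X_{t(\lambda,\epsilon)}\in\cdot)$ with the first-passage distribution $P^\lambda_\pi(X_{\tau_B}\in\cdot)$ (Theorem \ref{shortterm1}, via Lemma \ref{prep}) together with the explicit formula of Lemma \ref{passagedist}. You instead condition directly at the random time $\tau_B$ via the strong Markov property and pass to the limit there, absorbing the random time shift with the uniform-in-$t$ convergence of Theorem \ref{stronglocal} and the continuity estimate of Lemma \ref{timeclose}. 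The probabilistic ingredients coincide: your tightness-plus-shrinking-holding-times argument for $\tau_B^\lambda\to 0$ in probability is exactly the content of Lemma \ref{prep}, and your embedded hitting-distribution computation is Lemma \ref{passagedist} in the limit. What your route buys is a self-contained proof of the pointwise statement that avoids the deterministic intermediate time and the TV comparison of Theorem \ref{shortterm1}; what the paper's route buys is the stronger uniform statement over $[h,T]$ for free, of which Lemma \ref{point} is a corollary. One step you should tighten: the ``dominated convergence applied termwise'' for $\sum_{n}[\Omega_{AA}(\lambda)^n\Omega_{AB}(\lambda)]_{ik}$ needs a summable bound uniform in large $\lambda$; this does hold (pick $m$ with the row sums of $\Omega_{AA}^m$ below $1$, so that entrywise convergence gives a geometric tail bound for $\Omega_{AA}(\lambda)^n$ uniformly in large $\lambda$), or you can simply truncate the series at a fixed $N$ as the paper does in the proof of Lemma \ref{prep}; the same uniform control is also what justifies your joint-convergence/bounded-convergence step. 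With that detail filled in, the proof is complete.
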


\begin{proof}
This lemma is a direct corollary of Theorem \ref{weaklocal} in Section \ref{general}.
\end{proof}

\begin{lemma}\label{concentrate}
For any $h>0$ and $\epsilon>0$, when $\lambda$ is sufficiently large,
\begin{equation}
\sup_{n\geq1}\sup_{i\in S}P^\lambda_i(X_{nh}\in A) \leq \epsilon.
\end{equation}
\end{lemma}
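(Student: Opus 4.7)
The proof should be a short application of Lemma \ref{point} combined with the Markov property; there is no serious analytic obstacle.

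First I would handle the one-step case $n=1$. Lemma \ref{point} applied to the point mass $\pi=\delta_i$ says that for each fixed $i\in S$ and $j\in S$,
\begin{equation*}
\lim_{\lambda\rightarrow\infty}P^\lambda_i(X_h=j) = P_{\gamma(\delta_i)}(Y_h=j).
\end{equation*}
Since $\gamma(\delta_i)$ is a probability distribution supported on $B$ and the reduced chain $Y$ lives on $B$, summing over $j\in A$ gives $P_{\gamma(\delta_i)}(Y_h\in A)=0$, hence
\begin{equation*}
\lim_{\lambda\rightarrow\infty}P^\lambda_i(X_h\in A) = 0
\end{equation*}
for every $i\in S$. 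Because $S$ is finite this pointwise convergence is automatically uniform in $i$, so there exists $\lambda_0$ such that for all $\lambda>\lambda_0$, $\sup_{i\in S}P^\lambda_i(X_h\in A)\leq \epsilon$.

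Next I would bootstrap this estimate to arbitrary $n\geq 1$ via the Markov property. Conditioning at time $(n-1)h$,
\begin{equation*}
P^\lambda_i(X_{nh}\in A) = \sum_{k\in S}P^\lambda_i(X_{(n-1)h}=k)\,P^\lambda_k(X_h\in A) \leq \sup_{k\in S}P^\lambda_k(X_h\in A)\cdot 1 \leq \epsilon,
\end{equation*}
for every $i\in S$ and every $n\geq 1$, as soon as $\lambda>\lambda_0$. Taking the supremum in $n$ and in $i$ yields the claim.

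The only place where care is needed is that Lemma \ref{point} is invoked before it has itself been proved (it is a consequence of Theorem \ref{weaklocal} in the later section); but since the paper cites it as an available result, no circularity arises. The rest is just the Markov property and finiteness of $S$, so there is no real difficulty.
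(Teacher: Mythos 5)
Your proposal is correct and follows essentially the same route as the paper: both establish $\lim_{\lambda\rightarrow\infty}\sup_{i\in S}P^\lambda_i(X_h\in A)=0$ from Lemma \ref{point} (you simply spell out the summation over $j\in A$ and the finiteness of $S$ that the paper leaves implicit) and then bootstrap to all $n\geq 1$ by conditioning at time $(n-1)h$ via the Markov property.
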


\begin{proof}
By Lemma \ref{point}, it is easy to see that
\begin{equation}
\lim_{\lambda\rightarrow\infty}\sup_{i\in S}P^\lambda_i(X_h\in A) = 0.
\end{equation}
Thus when $\lambda$ is sufficiently large,
\begin{equation}
\sup_{i\in S}P^\lambda_i(X_h\in A) \leq \epsilon.
\end{equation}
Thus for any $n\geq 1$ and $i\in S$,
\begin{equation}
P^\lambda_i(X_{nh}\in A) = \sum_{j\in S}P^\lambda_i(X_{(n-1)h}=j)P^\lambda_j(X_h\in A) \leq \epsilon\sum_{j\in S}P^\lambda_i(X_{(n-1)h}=j) = \epsilon.
\end{equation}
This completes the proof of this lemma.
\end{proof}

\begin{lemma}\label{iterationinv}
Assume that the reduced chain $Y$ is irreducible. Let $\mu(\lambda)=(\mu_1(\lambda),\cdots,\mu_{|S|}(\lambda))$ be the invariant distribution of the Markov chain $X$. Then for any $h>0$ and $\epsilon>0$, when $\lambda$ is sufficiently large, for any $n\geq 1$ and $i\in S$,
\begin{equation}
\sum_{j\in B}|P^\lambda_i(X_{(n+1)h}=j)-\mu_j(\lambda)| \leq (1-|B|\gamma(h))\sum_{j\in B}|P^\lambda_i(X_{nh}=j)-\mu_j(\lambda)|+\epsilon,
\end{equation}
where
\begin{equation}
\gamma(h) = \frac{1}{2}\min_{i,j\in B}P_i(Y_h=j)>0.
\end{equation}
\end{lemma}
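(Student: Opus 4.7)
The plan is a Doeblin-style minorization argument. By the semigroup property at time $nh$ and invariance of $\mu(\lambda)$ under the $h$-step kernel of $X$,
\[
P^\lambda_i(X_{(n+1)h}=j)-\mu_j(\lambda)=\sum_{k\in S}v_k\,P^\lambda_k(X_h=j),\qquad v_k:=P^\lambda_i(X_{nh}=k)-\mu_k(\lambda).
\]
I split the sum over $k$ into $k\in B$ (the main term, where Doeblin contraction will act) and $k\in A$ (a small remainder, since both $X_{nh}$ and $\mu(\lambda)$ put vanishing mass on $A$). After summing absolute values over $j\in B$, the goal becomes bounding the $B$-contribution by $(1-|B|\gamma(h))\sum_{k\in B}|v_k|$ plus a small error, and the $A$-contribution by a small error.

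For the $B$-contribution, the key minorization is $P^\lambda_k(X_h=j)\geq\gamma(h)$ for all $k,j\in B$ once $\lambda$ is large. This follows because $P_k(Y_h=j)\geq 2\gamma(h)>0$ (finiteness of $B$ together with irreducibility of $Y$ and $h>0$) and Theorem \ref{stronglocal} provides uniform convergence $P^\lambda_k(X_h=\cdot)\to P_k(Y_h=\cdot)$ on the finite set $B$. I would then decompose $P^\lambda_k(X_h=j)=\gamma(h)+r_{kj}$ with $r_{kj}\geq 0$. The constant $\gamma(h)$ piece produces a $j$-independent term whose absolute value summed over $j\in B$ is $|B|\gamma(h)\bigl|\sum_{k\in B}v_k\bigr|$, which is small because $\sum_{k\in S}v_k=0$ forces $\bigl|\sum_{k\in B}v_k\bigr|=\bigl|\sum_{k\in A}v_k\bigr|$. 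The $r_{kj}$ piece contributes at most $\sum_{k\in B}|v_k|\sum_{j\in B}r_{kj}\leq(1-|B|\gamma(h))\sum_{k\in B}|v_k|$, since $\sum_{j\in B}P^\lambda_k(X_h=j)\leq 1$.

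For the $A$-contribution, I would bound directly
\[
\sum_{j\in B}\sum_{k\in A}|v_k|P^\lambda_k(X_h=j)\leq\sum_{k\in A}|v_k|\leq P^\lambda_i(X_{nh}\in A)+\sum_{k\in A}\mu_k(\lambda).
\]
The first summand is made arbitrarily small uniformly in $n\geq 1$ and $i\in S$ by Lemma \ref{concentrate}, while the second is made arbitrarily small by Theorem \ref{invconverge}. Combining all pieces and noting $|B|\gamma(h)\leq 1$ (since $\min_{i,j\in B}P_i(Y_h=j)\leq 1/|B|$ forces $\gamma(h)\leq 1/(2|B|)$), every error contribution can be absorbed into the single $\epsilon$ of the statement by taking $\lambda$ large.

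The main obstacle is purely bookkeeping: three distinct small quantities must be controlled simultaneously at a single threshold $\lambda$, namely the approximation error from Theorem \ref{stronglocal} (which must be less than $\gamma(h)$ to secure the minorization), the mass $P^\lambda_i(X_{nh}\in A)$ bounded via Lemma \ref{concentrate}, and the total mass $\sum_{k\in A}\mu_k(\lambda)$ bounded via Theorem \ref{invconverge}. Conceptually, Doeblin contraction applies only to transitions originating in $B$, so residual mass on $A$ cannot be contracted and instead must be absorbed as an additive error; this is exactly the reason the bound in the lemma has the form ``contraction factor times target quantity, plus $\epsilon$'' rather than a pure contraction.
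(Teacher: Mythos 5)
Your proposal is correct and is essentially the paper's own argument: a Doeblin-type minorization with the same constant $\gamma(h)$, the same use of $\sum_{k\in S}v_k=0$, contraction on the $B$-block with factor $1-|B|\gamma(h)$, and absorption of the mass on $A$ (controlled by Lemma \ref{concentrate} and the invariant measure's vanishing $A$-mass) into the additive $\epsilon$. The only cosmetic difference is that you split $k\in B$ versus $k\in A$ before subtracting the constant, so you need the minorization $P^\lambda_k(X_h=j)\geq\gamma(h)$ only for $k\in B$ (via Theorem \ref{stronglocal}), whereas the paper subtracts $\gamma(h)$ over all of $S$ and certifies the minorization for every starting state via Lemma \ref{point}; both yield the same bound.
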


\begin{proof}
Since the reduced chain $Y$ is irreducible, we have $P_i(Y_h=j)>0$ for any $i,j\in B$. This shows that $\gamma(h)>0$. By Lemma \ref{point}, for any $j\in B$,
\begin{equation}
\lim_{\lambda\rightarrow\infty}P^\lambda_\pi(X_h=j) = \sum_{i\in B}\lambda_i(\pi)P_i(Y_h=j) \geq \min_{i,j\in B}P_i(Y_h=j).
\end{equation}
Thus we obtain that
\begin{equation}
\lim_{\lambda\rightarrow\infty}\min_{i\in S\atop j\in B}P^\lambda_i(X_h=j) = \min_{i\in S\atop j\in B}\lim_{\lambda\rightarrow\infty}P^\lambda_i(X_h=j) \geq \min_{i,j\in B}P_i(Y_h=j) > \gamma(h).
\end{equation}
Thus when $\lambda$ is sufficiently large,
\begin{equation}\label{iteration1}
\min_{i\in S\atop j\in B}P^\lambda_i(X_h=j) > \gamma(h).
\end{equation}
By Lemma \ref{concentrate}, when $\lambda$ is sufficiently large,
\begin{equation}\label{iteration2}
\sup_{n\geq1}\sup_{i\in S}P^\lambda_i(X_{nh}\in A) \leq \frac{\epsilon}{2}.
\end{equation}
Combining \eqref{iteration1} and \eqref{iteration2}, we see that for any $n\geq 1$ and $i\in S$,
\begin{eqnarray*}
&& \sum_{j\in B}|P^\lambda_i(X_{(n+1)h}=j)-\mu_j(\lambda)| = \sum_{j\in B}|P^\lambda_i(X_{(n+1)h}=j)-P^\lambda_{\mu(\lambda)}(X_{(n+1)h}=j)| \\
&=& \sum_{j\in B}|\sum_{k\in S}P^\lambda_i(X_{nh}=k)P^\lambda_k(X_h=j)-\sum_{k\in S}P^\lambda_{\mu(\lambda)}(X_{nh}=k)P^\lambda_k(X_h=j)| \\
&=& \sum_{j\in B}|\sum_{k\in S}(P^\lambda_i(X_{nh}=k)-P^\lambda_{\mu(\lambda)}(X_{nh}=k))(P^\lambda_k(X_h=j)-\gamma(h))| \\
&\leq& \sum_{k\in S}\sum_{j\in B}|P^\lambda_i(X_{nh}=k)-P^\lambda_{\mu(\lambda)}(X_{nh}=k)|(P^\lambda_k(X_h=j)-\gamma(h)) \\
&=& \sum_{k\in S}|P^\lambda_i(X_{nh}=k)-P^\lambda_{\mu(\lambda)}(X_{nh}=k)|(P^\lambda_k(X_h\in B)-|B|\gamma(h)) \\
&\leq& (1-|B|\gamma(h))\sum_{k\in B}|P^\lambda_i(X_{nh}=k)-P^\lambda_{\mu(\lambda)}(X_{nh}=k)| + \sum_{k\in A}(P^\lambda_i(X_{nh}=k)+P^\lambda_{\mu(\lambda)}(X_{nh}=k)) \\
&=& (1-|B|\gamma(h))\sum_{j\in B}|P^\lambda_i(X_{nh}=j)-\mu_j(\lambda)| + P^\lambda_i(X_{nh}\in A)+P^\lambda_{\mu(\lambda)}(X_{nh}\in A) \\
&\leq& (1-|B|\gamma(h))\sum_{j\in B}|P^\lambda_i(X_{nh}=j)-\mu_j(\lambda)| + \epsilon.
\end{eqnarray*}
This completes the proof of this lemma.
\end{proof}

\begin{lemma}\label{speed}
Assume that the reduced chain $Y$ is irreducible. Let $\mu(\lambda)=(\mu_1(\lambda),\cdots,\mu_{|S|}(\lambda))$ be the invariant distribution of the Markov chain $X$. Then for any $h>0$ and $\epsilon>0$, when $\lambda$ is sufficiently large, for any $n\geq 1$ and $i\in S$,
\begin{equation}
\sum_{j\in B}|P^\lambda_i(X_{nh}=j)-\mu_j(\lambda)| \leq 2(1-|B|\gamma(h))^{n-1}+\epsilon.
\end{equation}
\end{lemma}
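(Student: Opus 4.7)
The plan is to iterate the one-step contraction estimate in Lemma \ref{iterationinv} into an exponential decay bound via a standard geometric-series argument. Set $\alpha = 1-|B|\gamma(h) \in (0,1)$ and write $a_n(i) = \sum_{j\in B}|P^\lambda_i(X_{nh}=j)-\mu_j(\lambda)|$. Lemma \ref{iterationinv} provides the recursion $a_{n+1}(i) \leq \alpha\, a_n(i) + \epsilon'$, valid uniformly in $i\in S$ and $n\geq 1$, once $\lambda$ is large enough (where $\epsilon'>0$ is the slack we are allowed to choose when invoking that lemma).

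First, given the target error $\epsilon>0$ in the present lemma, I would apply Lemma \ref{iterationinv} with slack $\epsilon' := |B|\gamma(h)\,\epsilon$, so that all $\lambda$ beyond some threshold satisfy $a_{n+1}(i) \leq \alpha\, a_n(i) + |B|\gamma(h)\,\epsilon$ for every $n\geq 1$ and every $i\in S$. Iterating this recursion gives, by induction on $n$,
\begin{equation*}
a_n(i) \leq \alpha^{\,n-1}\,a_1(i) + |B|\gamma(h)\,\epsilon \sum_{k=0}^{n-2}\alpha^k \leq \alpha^{\,n-1}\,a_1(i) + |B|\gamma(h)\,\epsilon \cdot \frac{1}{1-\alpha} = \alpha^{\,n-1}\,a_1(i) + \epsilon,
\end{equation*}
where the final equality uses $1-\alpha = |B|\gamma(h)$.

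Next, I would estimate the base case $a_1(i)$ crudely. Since both $(P^\lambda_i(X_h=j))_{j\in B}$ and $(\mu_j(\lambda))_{j\in B}$ are nonnegative with sum at most $1$, the triangle inequality gives $a_1(i) \leq \sum_{j\in B}P^\lambda_i(X_h=j) + \sum_{j\in B}\mu_j(\lambda) \leq 2$, uniformly in $i\in S$ and $\lambda$. Substituting this into the iterated bound yields $a_n(i) \leq 2\alpha^{n-1} + \epsilon = 2(1-|B|\gamma(h))^{n-1} + \epsilon$, which is exactly the claimed inequality.

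Essentially no step is hard here: the entire argument is the standard contraction-plus-bounded-perturbation trick applied to the recursion from Lemma \ref{iterationinv}. The only point requiring a bit of care is the choice of $\epsilon' = |B|\gamma(h)\,\epsilon$ when invoking Lemma \ref{iterationinv}, which absorbs the geometric series $\sum_k\alpha^k = 1/(1-\alpha)$ cleanly into the prescribed additive error $\epsilon$; and noting that the threshold on $\lambda$ then depends on both $h$ and $\epsilon$, which is consistent with the statement of the lemma.
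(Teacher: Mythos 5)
Your proof is correct and follows essentially the same route as the paper: invoke Lemma \ref{iterationinv} with slack $|B|\gamma(h)\epsilon$, iterate the resulting recursion, sum the geometric series (using $1-\alpha=|B|\gamma(h)$), and bound the base term $a_1(i)\leq 2$ by the triangle inequality. The only cosmetic difference is that the paper notes the trivial $n=1$ case separately, which your formulation already covers.
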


\begin{proof}
Let $a^i_n = \sum_{j\in B}|P^\lambda_i(X_{nh}=j)-\mu_j(\lambda)|$. By Lemma \ref{iterationinv}, for any $h>0$ and $\epsilon>0$, when $\lambda$ is sufficiently large, for any $n\geq 2$ and $i\in S$,
\begin{equation}
a^i_n \leq (1-|B|\gamma(h))a^i_{n-1}+|B|\gamma(h)\epsilon.
\end{equation}
Using the above relation repeatedly, we obtain that
\begin{equation}
a^i_n \leq (1-|B|\gamma(h))^{n-1}a^i_1+\left((1-|B|\gamma(h))^{n-2}+\cdots+1\right)|B|\gamma(h)\epsilon.
\end{equation}
Note that
\begin{equation}
a^i_1 \leq \sum_{j\in B}(P^\lambda_i(X_h=j)+\mu_j(\lambda)) \leq 2.
\end{equation}
Thus we obtain that
\begin{equation}
a^i_n \leq 2(1-|B|\gamma(h))^{n-1}+\frac{1}{|B|\gamma(h)}|B|\gamma(h)\epsilon = 2(1-|B|\gamma(h))^{n-1}+\epsilon.
\end{equation}
This shows that the lemma holds for $n\geq 2$. For $n=1$, it is easy to check that the lemma also holds.
\end{proof}

We are now in a position to state the main result of this section.
\begin{theorem}\label{strongglobal}
Assume that the reduced chain $Y$ is irreducible. Then for any $i,j\in B$,
\begin{equation}
\lim_{\lambda\rightarrow\infty}\sup_{t\geq 0}\left|P^\lambda_i(X_t=j)-P_i(Y_t=j)\right| = 0.
\end{equation}
\end{theorem}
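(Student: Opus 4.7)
The plan is to split the time axis $[0,\infty)$ into a compact initial segment $[0,T]$ and a tail $[T,\infty)$ and handle them with different tools. On the compact part, Theorem \ref{stronglocal} already delivers uniform convergence, so no new work is needed. On the tail, the strategy is to show that both $P^\lambda_i(X_t=j)$ and $P_i(Y_t=j)$ are close to their respective invariant distributions $\mu_j(\lambda)$ and $(\mu_B)_j$, which are themselves close to each other by Theorem \ref{invconverge}.

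Fix $\epsilon>0$ and $i,j\in B$. First I would use irreducibility of $Y$ together with finiteness of $B$ to choose $T_1$ so that $|P_i(Y_t=j)-(\mu_B)_j|<\epsilon$ for all $t\geq T_1$ and all $i,j\in B$. Next, fix some convenient $h>0$ (say $h=1$) and choose $N$ large enough that $2(1-|B|\gamma(h))^{N-1}<\epsilon$, where $\gamma(h)$ is the positive quantity from Lemma \ref{speed}. Set $T=\max(T_1,Nh)$. For $0\leq t\leq T+h$, Theorem \ref{stronglocal} yields $|P^\lambda_i(X_t=j)-P_i(Y_t=j)|<\epsilon$ for all sufficiently large $\lambda$.

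The substantive step is the tail $t>T+h$. I would write $t=nh+s$ with $n\geq N$ and $s\in[0,h)$. Applying the Markov property and using that $\mu(\lambda)$ is invariant gives
\begin{equation*}
|P^\lambda_i(X_t=j)-\mu_j(\lambda)|=\Bigl|\sum_{k\in S}\bigl(P^\lambda_i(X_{nh}=k)-\mu_k(\lambda)\bigr)P^\lambda_k(X_s=j)\Bigr|\leq\sum_{k\in S}|P^\lambda_i(X_{nh}=k)-\mu_k(\lambda)|.
\end{equation*}
I would then split the right-hand sum into $k\in B$ and $k\in A$. The $B$-part is bounded by Lemma \ref{speed}, which gives $2(1-|B|\gamma(h))^{n-1}+\epsilon<2\epsilon$ because $n\geq N$. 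The $A$-part is bounded by $P^\lambda_i(X_{nh}\in A)+\mu_A(\lambda)\mathbf{1}$, and both terms are small for large $\lambda$: the first by Lemma \ref{concentrate}, the second by Theorem \ref{invconverge}. Combining with $|\mu_j(\lambda)-(\mu_B)_j|\to 0$ (again Theorem \ref{invconverge}) and the choice of $T_1$, the triangle inequality $|P^\lambda_i(X_t=j)-P_i(Y_t=j)|\leq|P^\lambda_i(X_t=j)-\mu_j(\lambda)|+|\mu_j(\lambda)-(\mu_B)_j|+|(\mu_B)_j-P_i(Y_t=j)|$ yields a uniform $O(\epsilon)$ bound on the tail.

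The main obstacle is orchestrating the order of parameter choices: $\epsilon$ forces $T_1$; then $h$ and $N$ are selected (independent of $\lambda$); only afterward is $\lambda$ taken large enough to simultaneously activate Theorem \ref{stronglocal} on $[0,T+h]$, Lemma \ref{speed} with its geometric iteration, Lemma \ref{concentrate}, and the invariant-distribution convergence. A small technical point is that the geometric decay in Lemma \ref{speed} starts from $n=1$ with constant $2$, so $N$ must be chosen before $\lambda$; this is why $T$ is defined as $\max(T_1,Nh)$ rather than picked from the start based only on $Y$.
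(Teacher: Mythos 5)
Your proof is correct and follows essentially the same route as the paper: the compact segment is handled by Theorem \ref{stronglocal}, and the tail by comparing $X$ to its invariant distribution $\mu(\lambda)$ through the geometric bound of Lemma \ref{speed} (together with Lemma \ref{concentrate}), then $\mu(\lambda)$ to $\mu_B$ by Theorem \ref{invconverge}, and $\mu_B$ to the law of $Y$ by ergodic convergence of the irreducible reduced chain. The only deviation is the interpolation from grid times $nh$ to arbitrary $t$: the paper uses the time-continuity estimate of Lemma \ref{timeclose} with a small step $h=\epsilon/M$, while you use the Markov property and invariance of $\mu(\lambda)$ with a fixed $h$, which works equally well.
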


\begin{proof}
In the following proof, we fix $i,j\in B$. For any $0<\epsilon<1$, let $h=\epsilon/M$. When $\lambda$ is sufficiently large, we have $q_i(\lambda)h \leq Mh = \epsilon \leq -\log(1-\epsilon)$. This shows that
\begin{equation}
1-e^{-q_i(\lambda)h} \leq \epsilon.
\end{equation}
Let
\begin{equation}
T_0 = \left(\frac{\log\epsilon}{\log(1-|B|\gamma(h))}+1\right)h.
\end{equation}
Thus for any $t\geq T_0$,
\begin{equation}
\frac{t}{h} \geq \frac{T_0}{h} = \frac{\log\epsilon}{\log(1-|B|\gamma(h))}+1.
\end{equation}
Choose $n\geq 1$ such that $(n-1)h\leq t\leq nh$. Then we have
\begin{equation}
(1-|B|\gamma(h))^{n-1} \leq \epsilon.
\end{equation}
By Lemma \ref{timeclose} and Lemma \ref{speed}, it follows that
\begin{equation}
\begin{split}
|P^\lambda_i(X_t=j)-\mu_j(\lambda)| &\leq |P^\lambda_i(X_t=j)-P^\lambda_i(X_{nh}=j)| + |P^\lambda_i(X_{nh}=j)-\mu_j(\lambda)| \\
&\leq 1-e^{q_i(\lambda)h} + 2(1-|B|\gamma(h))^{n-1} + \epsilon \leq 4\epsilon.
\end{split}
\end{equation}
Since the reduced chain $Y$ is irreducible, $Y$ has a unique invariant distribution $\mu_B=(\mu_{|A|+1},\cdots,\mu_{|S|})$. By the convergence theorem of irreducible Markov chains, we can choose $T\geq T_0$, such that for any $t\geq T$,
\begin{equation}
|P_i(Y_t=j)-\mu_j| \leq \epsilon.
\end{equation}
By Theorem \ref{invconverge}, when $\lambda$ is sufficiently large,
\begin{equation}
|\mu_j(\lambda)-\mu_j| \leq \epsilon.
\end{equation}
Thus when $\lambda$ is sufficiently large, for any $t\geq T$,
\begin{equation}\label{largetime}
\begin{split}
|P^\lambda_i(X_t=j)-P_i(Y_t=j)| &\leq |P^\lambda_i(X_t=j)-\mu_j(\lambda)| + |\mu_j(\lambda)-\mu_j| + |\mu_j-P_i(Y_t=j)| \\
&\leq 4\epsilon + \epsilon + \epsilon = 6\epsilon.
\end{split}
\end{equation}
By Theorem \ref{stronglocal}, when $\lambda$ is sufficiently large, for any $0\leq t\leq T$,
\begin{equation}\label{smalltime}
|P^\lambda_i(X_t=j)-P_i(Y_t=j)| \leq \epsilon.
\end{equation}
Combining \eqref{largetime} and \eqref{smalltime}, we complete the proof of this theorem.
\end{proof}

The next result is a direct corollary of Theorem \ref{strongglobal}.
\begin{corollary}\label{dtvglobal}
Assume that the reduced chain $Y$ is irreducible. Then for any probability distribution $\pi$ concentrated on the slow state space $B$,
\begin{equation}
\lim_{\lambda\rightarrow\infty}\sup_{t\geq 0}d_{TV}(P^\lambda_\pi(X_t\in\cdot),P_\pi(Y_t\in\cdot)) = 0.
\end{equation}
\end{corollary}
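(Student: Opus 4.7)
The plan is to imitate the proof of Corollary \ref{dtvlocal} verbatim, with the sole modification that the supremum over $[0,T]$ is replaced by the supremum over $[0,\infty)$, and with Theorem \ref{strongglobal} invoked instead of Theorem \ref{stronglocal}. Since Theorem \ref{strongglobal} already delivers the pointwise (in $i,j\in B$) uniform-in-$t$ convergence, only two bookkeeping steps remain: upgrading the statement from pairs $i,j\in B$ to pairs $i\in B$, $j\in S$, and then averaging against the initial distribution $\pi$.

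First I would handle the case $i\in B$, $j\in A$. Because $Y$ lives on $B$ we have $P_i(Y_t=j)=0$ for every $t\geq 0$, so
\begin{equation*}
|P^\lambda_i(X_t=j)-P_i(Y_t=j)| = P^\lambda_i(X_t=j) \leq 1-\sum_{k\in B}P^\lambda_i(X_t=k) = \sum_{k\in B}\bigl(P_i(Y_t=k)-P^\lambda_i(X_t=k)\bigr),
\end{equation*}
where the last equality uses $\sum_{k\in B}P_i(Y_t=k)=1$. Taking absolute values and then the supremum over $t\geq 0$ bounds the left side by $\sum_{k\in B}\sup_{t\geq 0}|P^\lambda_i(X_t=k)-P_i(Y_t=k)|$, which tends to $0$ as $\lambda\to\infty$ by Theorem \ref{strongglobal} (and the finiteness of $B$). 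Combining with Theorem \ref{strongglobal} itself for $j\in B$, I obtain that for every $i\in B$ and every $j\in S$,
\begin{equation*}
\lim_{\lambda\rightarrow\infty}\sup_{t\geq 0}|P^\lambda_i(X_t=j)-P_i(Y_t=j)| = 0.
\end{equation*}

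Finally, for $\pi$ concentrated on $B$, the triangle inequality gives
\begin{equation*}
d_{TV}(P^\lambda_{\pi}(X_t\in\cdot),P_{\pi}(Y_t\in\cdot)) \leq \tfrac{1}{2}\sum_{i\in B}\sum_{j\in S}\pi_i\,|P^\lambda_i(X_t=j)-P_i(Y_t=j)|,
\end{equation*}
so taking supremum in $t$ and then sending $\lambda\to\infty$ finishes the proof, using that the double sum is finite and each term vanishes in the limit.

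There is essentially no obstacle here; the real work was already done in Theorem \ref{strongglobal}. The only subtlety to keep an eye on is that the step controlling $j\in A$ relies on summing only finitely many quantities before pulling the limit inside—this is fine because $|B|<\infty$. The proof is therefore just a mechanical transcription of the argument given for Corollary \ref{dtvlocal}.
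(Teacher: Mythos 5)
Your proposal is correct and matches the paper's intent exactly: the paper itself proves Corollary \ref{dtvglobal} by stating that the argument is "totally the same" as that of Corollary \ref{dtvlocal}, with Theorem \ref{strongglobal} supplying the uniform-in-$t$ pointwise convergence in place of Theorem \ref{stronglocal}. Your transcription of that argument (handling $j\in A$ via $\sum_{k\in B}P_i(Y_t=k)=1$ and then averaging over $\pi$ concentrated on $B$) is precisely what the paper has in mind.
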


\begin{proof}
The proof of this corollary is totally the same as that of Corollary \ref{dtvlocal}.
\end{proof}

The above corollary shows that if the initial distribution of the original chain $X$ is concentrated on the slow state space $B$ and if the reduced chain $Y$ is irreducible, then the distribution of the original chain $X$ will be very close to that of the reduced chain $Y$ over the whole time axis when $\lambda$ is sufficiently large.

The reader may ask when the reduced chain $Y$ is irreducible. The following proposition gives a simple sufficient condition for the reduced chain $Y$ being irreducible.
\begin{proposition}
Assume that for any $i,j\in B$ and $i\neq j$, there exists $i_0,i_1,\cdots,i_n\in B$ with $i_0=i$ and $i_n=j$ such that $q_{i_0i_1}\cdots q_{i_{n-1}i_n}>0$. Then the reduced chain $Y$ is irreducible.
\end{proposition}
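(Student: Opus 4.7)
The proposition is a short positivity argument that rides on equation \eqref{gamma}, which gives an explicit formula for the off-diagonal entries of the generator $\Gamma$ of the reduced chain. The plan is to observe that this formula writes $\gamma_{ij}$ as $q_{ij}$ plus a manifestly nonnegative correction term, so any direct $q$-path in $B$ between two slow states survives as a positive-rate path in the reduced chain, which immediately yields irreducibility.

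More precisely, I would first recall from equation \eqref{gamma} that for any $i,j\in B$ with $i\neq j$,
\begin{equation*}
\gamma_{ij} \;=\; q_{ij}+\sum_{n=0}^\infty\sum_{l,m\in A}q_{il}\,f_{lm}^{(n)}\,\omega_{mj},
\end{equation*}
where $f_{lm}^{(n)}=[(\Omega_{AA})^n]_{lm}\ge 0$, $q_{il}\ge 0$ for $i\in B$, $l\in A$, and $\omega_{mj}\ge 0$. Therefore the correction term is nonnegative, which gives the key pointwise bound $\gamma_{ij}\ge q_{ij}$ for every pair $i\neq j$ in $B$.

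Now I would invoke the hypothesis: given arbitrary distinct $i,j\in B$, pick $i_0,i_1,\ldots,i_n\in B$ with $i_0=i$, $i_n=j$ and $q_{i_k i_{k+1}}>0$ for each $k=0,\ldots,n-1$. By the bound just obtained, $\gamma_{i_k i_{k+1}}\ge q_{i_k i_{k+1}}>0$ for each $k$, so the same sequence $i_0,\ldots,i_n$ is a sequence in $B$ along which consecutive transition rates of $\Gamma$ are strictly positive. Equivalently, $i$ leads to $j$ in the reduced chain. Since this holds for every ordered pair of distinct slow states, the generator $\Gamma$ is irreducible, hence so is the reduced chain $Y$.

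There is essentially no obstacle here: the only thing one has to be careful about is that $B\subset S$ and the hypothesized path stays inside $B$, so the rates $q_{i_k i_{k+1}}$ in the hypothesis are genuine entries of $Q_{BB}$, which is exactly the block that appears in the definition of $\Gamma$. The case $|B|=1$ is trivial (any one-state chain is irreducible), so the argument above covers all cases.
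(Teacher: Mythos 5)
Your argument is correct and is essentially the same as the paper's: both deduce $\gamma_{ij}\geq q_{ij}$ for distinct $i,j\in B$ from the nonnegativity of the correction term in \eqref{gamma}, and then transfer the hypothesized positive-rate path in $B$ from $Q$ to $\Gamma$ to conclude irreducibility. Your added remarks (nonnegativity of each factor, the trivial $|B|=1$ case) are fine but do not change the route.
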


\begin{proof}
In view of \eqref{gamma}, it follows that $\gamma_{ij}\geq q_{ij}$ for any $i,j\in B$ and $i\neq j$. Therefore, for any $i,j\in B$ and $i\neq j$, there exists $i_0,i_1,\cdots,i_n\in B$ with $i_0=i$ and $i_n=j$ such that $\gamma_{i_0i_1}\cdots\gamma_{i_{n-1}i_n}>0$. This shows that the reduced chain $Y$ is irreducible.
\end{proof}

\section{Reduction of the Markov chain under general initial distributions}\label{general}
We have seen that if the initial distribution of the original chain $X$ is concentrated on the slow state space $B$, then the distributions of the original chain $X$ and the reduced chain $Y$ are close to each other when $\lambda$ is sufficiently large. However, what is the case if the initial distribution of $X$ is not concentrated on $B$? In this section, we shall prove that, although the initial distribution may not be concentrated on $B$, the distribution of $X$ will be ``almost" concentrated on $B$ after a very short time when $\lambda$ is sufficiently large. This fact implies the main result of this section, which shows that when the initial distribution is not concentrated on $B$, the distribution of the original chain $X$ will be very close to that of the reduced chain $Y$ after an arbitrarily small time $h>0$ when $\lambda$ is sufficiently large.

Let $B$ be the slow state space. Let
\begin{equation}
\tau_B = \inf\{t\geq 0:X_t\in B\}
\end{equation}
be the first-passage time of $B$ for the Markov chain $X$. In the previous discussion, we have defined the first-passage time of $B$ for the discrete-time Markov chain $\eta$ as
\begin{equation}
T_B = \inf\{n\geq 0:~\eta_n\in B\}.
\end{equation}
Recall that we always assume that $P_i(T_B<\infty) = 1$ for any $i\in A$.
\begin{lemma}\label{TB}
For any $i\in A$, we have
\begin{equation}
P_i(T_B<\infty) = \sum_{n=0}^\infty\sum_{k\in A}\sum_{j\in B}f_{ik}^{(n)}\omega_{kj}.
\end{equation}
where $f_{ik}^{(n)} = [(\Omega_{AA})^n]_{ik}$.
\end{lemma}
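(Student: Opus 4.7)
The plan is to decompose the event $\{T_B<\infty\}$ according to the exact value of $T_B$ and then use the fact that powers of $\Omega_{AA}$ count the probability weight of paths that stay inside $A$.

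First I would note that for $i\in A$ we have $\eta_0=i\notin B$, so necessarily $T_B\geq 1$; hence the decomposition
\begin{equation*}
P_i(T_B<\infty)=\sum_{n=0}^{\infty}P_i(T_B=n+1).
\end{equation*}
Next, the event $\{T_B=n+1\}$ is exactly the event that $\eta_0,\eta_1,\dots,\eta_n\in A$ and $\eta_{n+1}\in B$. Splitting according to the values $\eta_n=k\in A$ and $\eta_{n+1}=j\in B$ and applying the one-step Markov property of $\eta$, I obtain
\begin{equation*}
P_i(T_B=n+1)=\sum_{k\in A}\sum_{j\in B}P_i(\eta_0,\dots,\eta_{n-1}\in A,\,\eta_n=k)\,\omega_{kj}.
\end{equation*}

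The main identification step is to recognise that $P_i(\eta_0,\dots,\eta_{n-1}\in A,\,\eta_n=k)$ equals the $(i,k)$-entry of $(\Omega_{AA})^n$. This follows by induction on $n$: summing the transition probabilities over all intermediate states confined to $A$ is precisely matrix multiplication restricted to the $A$-block of $\Omega$. In other words, this probability is $f_{ik}^{(n)}$ by the definition given in the paper. Substituting back yields
\begin{equation*}
P_i(T_B=n+1)=\sum_{k\in A}\sum_{j\in B}f_{ik}^{(n)}\omega_{kj},
\end{equation*}
and summing over $n\geq 0$ gives the desired formula. (The case $n=0$ corresponds to $T_B=1$ and reduces to $\sum_{j\in B}\omega_{ij}$, consistent with $f_{ii}^{(0)}=1$ and $f_{ik}^{(0)}=0$ for $k\neq i$.)

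I do not anticipate any serious obstacle here: the result is essentially a bookkeeping identity, and the only mild point worth writing carefully is the induction establishing $[(\Omega_{AA})^n]_{ik}=P_i(\eta_0,\dots,\eta_{n-1}\in A,\,\eta_n=k)$, which is standard for taboo probabilities of discrete-time Markov chains. Fubini (nonnegative summands) justifies the interchange of the outer sum over $n$ with the sums over $k\in A$ and $j\in B$ to produce the triple sum in the stated form.
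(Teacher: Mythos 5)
Your proposal is correct and follows essentially the same route as the paper: decompose $\{T_B<\infty\}$ over the disjoint events $\{T_B=n+1\}=\{\eta_0,\dots,\eta_n\in A,\ \eta_{n+1}\in B\}$, condition on $\eta_n=k\in A$ and $\eta_{n+1}=j\in B$, and identify the taboo probability $P_i(\eta_0,\dots,\eta_n\in A,\ \eta_n=k)$ with $[(\Omega_{AA})^n]_{ik}$. The paper's proof is exactly this computation, stated slightly more tersely, so no changes are needed.
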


\begin{proof}
For any $i\in A$, it is easy to see that
\begin{equation}
\begin{split}
P_i(T_B<\infty) &= \sum_{n=0}^\infty P_i(\eta_0,\cdots,\eta_n\in A,\eta_{n+1}\in B) \\
&= \sum_{n=0}^\infty\sum_{k\in A}\sum_{j\in B}P_i(\eta_0,\cdots,\eta_n\in A,\eta_n=k,\eta_{n+1}=j) \\
&= \sum_{n=0}^\infty\sum_{k\in A}\sum_{j\in B}f_{ik}^{(n)}\omega_{kj},
\end{split}
\end{equation}
where we have used the fact that $P_i(\eta_0,\cdots,\eta_n\in A,\eta_n=k) = [(\Omega_{AA})^n]_{ik}$.
\end{proof}

\begin{lemma}\label{prep}
Let $\pi$ be a probability distribution on the state space $S$. Then for any $\lambda > 0$ and $\epsilon > 0$, there exists $t(\lambda,\epsilon) > 0$ such that $\lim_{\lambda\rightarrow\infty}t(\lambda,\epsilon) = 0$ and when $\lambda$ is sufficiently large,
\begin{equation}
P^\lambda_\pi(\tau_B\leq t(\lambda,\epsilon))\geq 1-\epsilon.
\end{equation}
\end{lemma}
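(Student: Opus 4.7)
The plan is to exploit two facts: first, for starting states in the fast space $A$, the assumption $P_i(T_B<\infty)=1$ together with the convergence $\Omega(\lambda)\to\Omega$ guarantees that the $\lambda$-jump chain $\xi$ reaches $B$ within some bounded number $N$ of steps with probability close to 1; second, while the chain is still in $A$, each holding time is exponential with rate at least $q_{\min}(\lambda):=\min_{i\in A}q_i(\lambda)$, which tends to infinity. Combining these, the continuous-time hitting time $\tau_B$ should be bounded by a sum of $N$ very small exponentials, hence small. Note first that we may assume $\pi$ is supported on $A$: any mass of $\pi$ on $B$ contributes $\tau_B=0$, which only helps.

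The first step is to choose $N$. Since $P_i(T_B<\infty)=1$ for every $i\in A$ and $A$ is finite, there exists $N$ such that $P_i(T_B\leq N)\geq 1-\epsilon/2$ uniformly for $i\in A$. The event $\{T_B\leq N\}$ depends only on the first $N$ transitions of the jump chain, and these transitions are continuous functions of the entries of $\Omega(\lambda)$. Since $\Omega(\lambda)\to\Omega$ entrywise, we obtain $P_i^\lambda(T_B^\xi\leq N)\to P_i(T_B\leq N)$ for each $i\in A$, where $T_B^\xi$ denotes the first-passage time of $B$ for $\xi$. Hence for $\lambda$ large enough,
\begin{equation}
P_\pi^\lambda(T_B^\xi\leq N)\geq 1-\tfrac{3\epsilon}{4}.
\end{equation}

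The second step controls the holding times. Conditional on the trajectory $(\xi_0,\xi_1,\ldots)$, the holding times are independent exponentials with rates $q_{\xi_n}(\lambda)$. On the event $\{T_B^\xi\leq N\}$ we have $\xi_0,\ldots,\xi_{T_B^\xi-1}\in A$, so each relevant rate is at least $q_{\min}(\lambda)$. Therefore, on this event, $\tau_B$ is stochastically dominated by $\Gamma_N(\lambda):=E_1+\cdots+E_N$ with $E_k$ i.i.d. $\mathrm{Exp}(q_{\min}(\lambda))$, and $E[\Gamma_N(\lambda)]=N/q_{\min}(\lambda)$. Markov's inequality gives $P(\Gamma_N(\lambda)>t)\leq N/(t\,q_{\min}(\lambda))$.

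Finally, set $t(\lambda,\epsilon)=4N/(\epsilon\,q_{\min}(\lambda))$. Since $q_{\min}(\lambda)\to\infty$, we have $t(\lambda,\epsilon)\to 0$. On the event $\{T_B^\xi\leq N\}$, Markov's inequality yields $P(\tau_B>t(\lambda,\epsilon))\leq\epsilon/4$, so
\begin{equation}
P_\pi^\lambda(\tau_B>t(\lambda,\epsilon))\leq \tfrac{3\epsilon}{4}+\tfrac{\epsilon}{4}=\epsilon,
\end{equation}
as required. The main technical point is the uniform-in-$\lambda$ control of $\{T_B^\xi\leq N\}$ in the second step; the rest is a straightforward exponential-tail estimate exploiting $q_{\min}(\lambda)\to\infty$.
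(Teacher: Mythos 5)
Your proposal is correct and follows essentially the same route as the paper: both proofs truncate the number of jumps through the fast space at some $N$ (using $P_i(T_B<\infty)=1$ together with the convergence $\Omega(\lambda)\to\Omega$) and then show that the sum of at most $N$ fast-state holding times becomes negligible as $\lambda\to\infty$. The only difference is in the bookkeeping of that last step: the paper defines $t(\lambda,\epsilon)$ implicitly as a uniform quantile of the sums $E_{i,k_1,\cdots,k_n}$ and invokes Slutsky's theorem, whereas you take the explicit choice $t(\lambda,\epsilon)=4N/(\epsilon\min_{i\in A}q_i(\lambda))$ via stochastic domination and Markov's inequality, which is an equally valid (and slightly more quantitative) variant of the same estimate.
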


\begin{proof}
Let $S_1,S_2,\cdots$ be the holding times of the Markov chain $X$ and let $J_n = \sum_{i=1}^nS_i$. Then $J_1,J_2,\cdots$ are the jump times of the Markov chain $X$. For any $i\in A$ and $t > 0$,
\begin{equation}\label{tauB}
\begin{split}
& P^\lambda_i(\tau_B\leq t)
= \sum_{n=0}^\infty P^\lambda_i(J_{n+1}\leq t,\xi_1,\cdots,\xi_n\in A,\xi_{n+1}\in B) \\
&= \sum_{n=0}^\infty\sum_{k_1,\cdots,k_n\in A}\sum_{j\in B}P^\lambda_i(J_{n+1}\leq t,\xi_1=k_1,\cdots,\xi_n=k_n,\xi_{n+1}=j) \\
&= \sum_{n=0}^\infty\sum_{k_1,\cdots,k_n\in A}\sum_{j\in B}P^\lambda_i(E_{i,k_1,\cdots,k_n}\leq t)\omega_{ik_1}(\lambda)\cdots\omega_{k_nj}(\lambda),
\end{split}
\end{equation}
where $E_{i,k_1,\cdots,k_n}$ is the sum of independent exponential random variables with parameters $q_i(\lambda),q_{k_1}(\lambda)$, $\cdots,q_{k_n}(\lambda)$, respectively. Recall that we have assumed that $P_i(T_B<\infty) = 1$ for any $i\in A$. By Lemma \ref{TB}, it follows that
\begin{equation}
\sum_{n=0}^\infty\sum_{k\in A}\sum_{j\in B}f_{ik}^{(n)}\omega_{kj} = 1.
\end{equation}
Thus we can choose a sufficiently large $N$, such that for any $i\in A$,
\begin{equation}\label{noI}
\sum_{n=0}^N\sum_{k\in A}\sum_{j\in B}f_{ik}^{(n)}\omega_{kj}\geq 1-\epsilon.
\end{equation}
We further set
\begin{equation}
t(\lambda,\epsilon) = \sup_{0\leq n\leq N\atop i,k_1,\cdots,k_n\in A}\left\{t>0:P^\lambda_i(E_{i,k_1,\cdots,k_n}\leq t)= 1-\epsilon\right\}.
\end{equation}
It is easy to see that $t(\lambda,\epsilon)>0$. Since $i,k_1,\cdots,k_n\in A$, by Slutsky's theorem, $E_{i,k_1,\cdots,k_n}$  converges in distribution to 0 as $\lambda\rightarrow\infty$. Thus for any $h>0$, when $\lambda$ is sufficiently large, for any $0\leq n\leq N$ and $i,k_1,\cdots,k_n\in A$, we have $P^\lambda_i(E_{i,k_1,\cdots,k_n}\leq h)> 1-\epsilon$. By the definition of $t(\lambda,\epsilon)$, it is easy to see that $t(\lambda,\epsilon)\leq h$. This clearly shows that $\lim_{\lambda\rightarrow\infty}t(\lambda,\epsilon) = 0$. In view of \eqref{tauB}, we obtain that
\begin{equation}
\begin{split}
P^\lambda_i(\tau_B\leq t(\lambda,\epsilon))
&\geq \sum_{n=0}^N\sum_{k_1,\cdots,k_n\in A}\sum_{j\in B}P^\lambda_i(E_{i,k_1,\cdots,k_n}\leq t(\lambda,\epsilon))\omega_{ik_1}(\lambda)\cdots\omega_{k_nj}(\lambda) \\
&\geq (1-\epsilon)\sum_{n=0}^N\sum_{k_1,\cdots,k_n\in A}\sum_{j\in B}\omega_{ik_1}(\lambda)\cdots\omega_{k_nj}(\lambda) \\
&= (1-\epsilon)\sum_{n=0}^N\sum_{k\in A}\sum_{j\in B}f_{ik}^{(n)}(\lambda)\omega_{kj}(\lambda),
\end{split}
\end{equation}
where $f_{ik}^{(n)}(\lambda) = [(\Omega_{AA}(\lambda))^n]_{ik}$. In view of \eqref{noI}, when $\lambda$ is sufficiently large, for any $i\in A$,
\begin{equation}
\sum_{n=0}^N\sum_{k\in A}\sum_{j\in B}f_{ik}^{(n)}(\lambda)\omega_{kj}(\lambda) > 1-2\epsilon.
\end{equation}
Thus we have
\begin{equation}
P^\lambda_i(\tau_B\leq t(\lambda,\epsilon)) \geq (1-\epsilon)(1-2\epsilon) \geq 1-3\epsilon.
\end{equation}
Since the above equation holds for any $i\in A$, we finally obtain that
\begin{equation}
\begin{split}
& P^\lambda_\pi(\tau_B\leq t(\lambda,\epsilon))
= P^\lambda_\pi(\tau_B=0) + P^\lambda_\pi(0<\tau_B\leq t(\lambda,\epsilon)) \\
&= \sum_{i\in B}\pi_i + \sum_{i\in A}\pi_iP^\lambda_i(\tau_B\leq t(\lambda,\epsilon))
\geq \sum_{i\in B}\pi_i + \sum_{i\in A}\pi_i(1-3\epsilon)
\geq 1-3\epsilon.
\end{split}
\end{equation}
This implies the result of this lemma.
\end{proof}

The following theorem, which is interesting in its own right, is a preparation theorem for the main result of this section.
\begin{theorem}\label{shortterm1}
Let $\pi$ be a probability distribution on the state space $S$. Then for any $\lambda > 0$ and $\epsilon > 0$, there exists $t(\lambda,\epsilon) > 0$ such that $\lim_{\lambda\rightarrow\infty}t(\lambda,\epsilon) = 0$ and when $\lambda$ is sufficiently large,
\begin{equation}
d_{TV}(P^\lambda_\pi(X_{t(\lambda,\epsilon)}\in\cdot),P^\lambda_\pi(X_{\tau_B}\in\cdot)) \leq \epsilon.
\end{equation}
\end{theorem}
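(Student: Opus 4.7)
The plan is to use the coupling inequality $d_{TV}(\mathcal{L}(U),\mathcal{L}(V)) \leq P(U\neq V)$ applied to the pair $U = X_{t(\lambda,\epsilon)}$ and $V = X_{\tau_B}$ on the same probability space. It then suffices to exhibit $t(\lambda,\epsilon) \to 0$ for which
\[
P^\lambda_\pi(X_{t(\lambda,\epsilon)} \neq X_{\tau_B}) \leq \epsilon
\]
for all sufficiently large $\lambda$.

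First, I would split this probability according to whether $X$ has reached the slow space by time $t(\lambda,\epsilon)$:
\[
P^\lambda_\pi(X_{t(\lambda,\epsilon)} \neq X_{\tau_B}) \leq P^\lambda_\pi(\tau_B > t(\lambda,\epsilon)) + P^\lambda_\pi\bigl(\tau_B \leq t(\lambda,\epsilon),\, X_{t(\lambda,\epsilon)} \neq X_{\tau_B}\bigr).
\]
Applying Lemma \ref{prep} with parameter $\epsilon/2$ supplies a time $t(\lambda,\epsilon) \to 0$ as $\lambda \to \infty$ for which the first term is at most $\epsilon/2$.

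For the second term I would invoke the strong Markov property at $\tau_B$. On $\{\tau_B \leq t(\lambda,\epsilon)\}$, conditional on $\mathcal{F}_{\tau_B}$ the process $(X_{\tau_B + u})_{u\geq 0}$ is a Markov chain starting from $X_{\tau_B} \in B$, so
\[
P^\lambda_\pi\bigl(X_{t(\lambda,\epsilon)} \neq X_{\tau_B} \,\big|\, \mathcal{F}_{\tau_B}\bigr) \leq 1 - e^{-q_{X_{\tau_B}}(\lambda)\,(t(\lambda,\epsilon) - \tau_B)},
\]
because the event $\{X_{t(\lambda,\epsilon)} \neq X_{\tau_B}\}$ is contained in the event that at least one jump occurs on $[\tau_B, t(\lambda,\epsilon)]$, which has the displayed exponential probability. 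Setting $C = \max_{j\in B} q_j + 1$, which is finite because every $j \in B$ is slow, we have $q_j(\lambda) \leq C$ for all $j \in B$ once $\lambda$ is large, so the conditional bound is at most $1 - e^{-C t(\lambda,\epsilon)} \leq C t(\lambda,\epsilon)$. Taking expectations collapses the integration against the joint law of $(\tau_B, X_{\tau_B})$ to this same pointwise bound. Since $t(\lambda,\epsilon) \to 0$, the second term is eventually $\leq \epsilon/2$, and the two estimates combine to give the desired inequality.

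I do not expect a deep obstacle. The real content is the interplay between Lemma \ref{prep}, which bounds the hitting time of $B$ in probability, and the fact that every state in $B$ has bounded holding-time rate in the $\lambda\to\infty$ limit, so essentially nothing moves between times $\tau_B$ and $t(\lambda,\epsilon)$. The only point needing care is that the conditional bound depends on the random quantities $\tau_B$ and $X_{\tau_B}$; this is handled by the uniformity of $1 - e^{-C t(\lambda,\epsilon)}$ over $j \in B$ and over residual times in $[0,t(\lambda,\epsilon)]$.
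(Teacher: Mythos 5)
Your proposal is correct, and it rests on the same probabilistic ingredients as the paper's proof -- Lemma \ref{prep} for the hitting time, the strong Markov property at $\tau_B$, and the fact that the slow states have uniformly bounded rates $q_j(\lambda)\leq C$ for large $\lambda$ -- but it packages them differently. The paper proves the estimate state by state: for each $j\in B$ it derives a two-sided comparison of $P^\lambda_\pi(X_{t(\lambda,\epsilon)}=j)$ and $P^\lambda_\pi(X_{\tau_B}=j)$ via events of the form $\{X_{\tau_B}=k,\,X_{\tau_B\vee t(\lambda,\epsilon)}=j\}$, then treats $j\in A$ separately and sums, ending with a constant $|B|(|B|+4)$ in front of $\epsilon$. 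You instead observe that $X_{t(\lambda,\epsilon)}$ and $X_{\tau_B}$ live on the same path, so the coupling inequality $d_{TV}\bigl(\mathcal{L}(X_{t(\lambda,\epsilon)}),\mathcal{L}(X_{\tau_B})\bigr)\leq P^\lambda_\pi\bigl(X_{t(\lambda,\epsilon)}\neq X_{\tau_B}\bigr)$ reduces everything to one event, which you split as $P^\lambda_\pi(\tau_B>t(\lambda,\epsilon))+P^\lambda_\pi(\tau_B\leq t(\lambda,\epsilon),\,X_{t(\lambda,\epsilon)}\neq X_{\tau_B})$ and bound by $\epsilon/2+\bigl(1-e^{-Ct(\lambda,\epsilon)}\bigr)$; the conditional step is sound because on $\{\tau_B\leq t(\lambda,\epsilon)\}$ the event $\{X_{t(\lambda,\epsilon)}\neq X_{\tau_B}\}$ forces a jump in $[\tau_B,t(\lambda,\epsilon)]$, whose $\mathcal{F}_{\tau_B}$-conditional probability is $1-e^{-q_{X_{\tau_B}}(\lambda)(t(\lambda,\epsilon)-\tau_B)}\leq 1-e^{-Ct(\lambda,\epsilon)}$ uniformly in the random quantities. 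What your route buys is brevity and cleaner constants (a genuine $\epsilon$ rather than a dimension-dependent multiple, with no separate handling of the fast states); what the paper's pointwise route buys is explicit per-state estimates of the form $|P^\lambda_\pi(X_{t(\lambda,\epsilon)}=j)-P^\lambda_\pi(X_{\tau_B}=j)|\leq(|B|+2)\epsilon$, which are stated in a form directly reusable elsewhere. The only minor bookkeeping point in your write-up is that you should apply Lemma \ref{prep} at level $\epsilon/2$ (or note the harmless rescaling), and note that $\tau_B<\infty$ a.s.\ for each fixed $\lambda$ by irreducibility of $Q(\lambda)$, so $X_{\tau_B}$ is well defined; neither affects the argument.
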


\begin{proof}
Choose $t(\lambda,\epsilon)$ as in Lemma \ref{prep}. Note that for any $j\in B$, $X_{t(\lambda,\epsilon)}=j$ implies $\tau_B\leq t(\lambda,\epsilon)$. Thus we obtain that
\begin{equation}\label{p1}
\begin{split}
& P^\lambda_\pi(X_{t(\lambda,\epsilon)}=j)
\leq P^\lambda_\pi(X_{t(\lambda,\epsilon)}=j,\tau_B\leq t(\lambda,\epsilon)) \\
&= \sum_{k\in B}P^\lambda_\pi(X_{\tau_B}=k,X_{t(\lambda,\epsilon)}=j,\tau_B\leq t(\lambda,\epsilon)) \\
&\leq \sum_{k\in B\atop k\neq j}P^\lambda_\pi(X_{\tau_B}=k,X_{\tau_B\vee t(\lambda,\epsilon)}=j) + P^\lambda_\pi(X_{\tau_B}=j).
\end{split}
\end{equation}
By the strong Markov property, for any $k\in B$ and $k\neq j$,
\begin{equation}
\begin{split}
& P^\lambda_\pi(X_{\tau_B}=k,X_{\tau_B\vee t(\lambda,\epsilon)}=j) = P^\lambda_\pi(X_{\tau_B}=k)P^\lambda_k(X_{\tau_B\vee t(\lambda,\epsilon)-\tau_B}=j) \\
&\leq P^\lambda_k(X_{\tau_B\vee t(\lambda,\epsilon)-\tau_B}=j) \leq P^\lambda_k(E_k\leq\tau_B\vee t(\lambda,\epsilon)-\tau_B) \\
&\leq P^\lambda_k(E_k\leq t(\lambda,\epsilon)) = 1-e^{-q_k(\lambda)t(\lambda,\epsilon)},
\end{split}
\end{equation}
where $E_k$ is an exponential random variable with parameter $q_k(\lambda)$. Since $k\in B$, by Lemma \ref{prep}, we have $q_k(\lambda)t(\lambda,\epsilon)\rightarrow 0$ as $\lambda\rightarrow\infty$. Thus when $\lambda$ is sufficiently large, for any $k\in B$,
\begin{equation}
1-e^{-q_k(\lambda)t(\lambda,\epsilon)} \leq \epsilon.
\end{equation}
Thus we have
\begin{equation}
P^\lambda_\pi(X_{\tau_B}=k,X_{\tau_B\vee t(\lambda,\epsilon)}=j) \leq \epsilon.
\end{equation}
In view of \eqref{p1}, we obtain that
\begin{equation}\label{left}
P^\lambda_\pi(X_{t(\lambda,\epsilon)}=j) \leq P^\lambda_\pi(X_{\tau_B}=j) + |B|\epsilon.
\end{equation}
By Lemma \ref{prep}, when $\lambda$ is sufficiently large, we have $P^\lambda_\pi(\tau_B<t(\lambda,\epsilon))\geq 1-\epsilon$. Thus we have
\begin{equation}\label{p2}
\begin{split}
& P^\lambda_\pi(X_{\tau_B}=j) \leq P^\lambda_\pi(X_{\tau_B}=j,\tau_B<t(\lambda,\epsilon)) + \epsilon \\
&\leq P^\lambda_\pi(X_{t(\lambda,\epsilon)}=j) + P^\lambda_\pi(X_{\tau_B}=j,X_{t(\lambda,\epsilon)}\neq j,\tau_B<t(\lambda,\epsilon)) + \epsilon \\
&\leq P^\lambda_\pi(X_{t(\lambda,\epsilon)}=j) + P^\lambda_\pi(X_{\tau_B}=j,X_{\tau_B\vee t(\lambda,\epsilon)}\neq j) + \epsilon.
\end{split}
\end{equation}
Using the strong Markov property again, we obtain that
\begin{equation}
\begin{split}
& P^\lambda_\pi(X_{\tau_B}=j,X_{\tau_B\vee t(\lambda,\epsilon)}\neq j) = P^\lambda_\pi(X_{\tau_B}=j)P^\lambda_j(X_{\tau_B\vee t(\lambda,\epsilon)-\tau_B}\neq j) \\
&\leq P^\lambda_j(X_{\tau_B\vee t(\lambda,\epsilon)-\tau_B}\neq j) \leq P^\lambda_j(E_j\leq\tau_B\vee t(\lambda,\epsilon)-\tau_B) \\
&\leq P^\lambda_j(E_j\leq t(\lambda,\epsilon)) = 1-e^{-q_j(\lambda)t(\lambda,\epsilon)} \leq \epsilon.
\end{split}
\end{equation}
In view of \eqref{p2}, it follows that
\begin{equation}\label{right}
P^\lambda_\pi(X_{\tau_B}=j) \leq P^\lambda_\pi(X_{t(\lambda,\epsilon)}=j) + 2\epsilon.
\end{equation}
Combining \eqref{left} and \eqref{right}, we obtain that
\begin{equation}
|P^\lambda_\pi(X_{t(\lambda,\epsilon)}=j)-P^\lambda_\pi(X_{\tau_B}=j)| \leq (|B|+2)\epsilon.
\end{equation}
Thus we have
\begin{equation}
\sum_{j\in B}|P^\lambda_\pi(X_{t(\lambda,\epsilon)}=j)-P^\lambda_\pi(X_{\tau_B}=j)| \leq |B|(|B|+2)\epsilon.
\end{equation}
In addition, we have
\begin{equation}
\begin{split}
& \sum_{j\in A}|P^\lambda_\pi(X_{t(\lambda,\epsilon)}=j)-P^\lambda_\pi(X_{\tau_B}=j)|
= \sum_{j\in A}P^\lambda_\pi(X_{t(\lambda,\epsilon)}=j) \\
&= 1-\sum_{j\in B}P^\lambda_\pi(X_{t(\lambda,\epsilon)}=j)
\leq 1-\sum_{j\in B}(P^\lambda_\pi(X_{\tau_B}=j)-2\epsilon)
= 2|B|\epsilon.
\end{split}
\end{equation}
Thus when $\lambda$ is sufficiently large,
\begin{equation}
\sum_{j\in S}|P^\lambda_\pi(X_{t(\lambda,\epsilon)}=j)-P^\lambda_\pi(X_{\tau_B}=j)| \leq |B|(|B|+4)\epsilon.
\end{equation}
This implies the result of this theorem.
\end{proof}

\begin{definition}
$P^\lambda_\pi(X_{\tau_B}\in\cdot)$ is called the first-passage distribution of $B$ for the Markov chain $X$.
\end{definition}

The above theorem shows that when $\lambda$ is sufficiently large, given an arbitrarily small error $\epsilon > 0$, we can always find a small deterministic time $t(\lambda,\epsilon)>0$, such that the distribution of the Markov chain $X$ at time $t(\lambda,\epsilon)$ is close to the first-passage distribution of $B$ with an admissible error less than $\epsilon$. This clearly shows that when $\lambda$ is sufficiently large, the distribution of the Markov chain $X$ will be almost concentrated on the slow state space $B$ within a very short time. The first-passage distribution of $B$ can be calculated explicitly, as shown in the following lemma.
\begin{lemma}\label{passagedist}
Let $\pi=(\pi_A,\pi_B)$ be a probability distribution on the state space $S$. Then
\begin{equation}
P^\lambda_\pi(X_{\tau_B}\in\cdot)  = \pi_B + \pi_A(I-\Omega_{AA}(\lambda))^{-1}\Omega_{AB}(\lambda).
\end{equation}
\end{lemma}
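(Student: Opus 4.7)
The plan is to reduce everything to the embedded jump chain $\xi$, for which the formula becomes a direct computation. The key observation is that $X_{\tau_B}$ equals $\xi_{T_B(\lambda)}$, where $T_B(\lambda) = \inf\{n\geq 0 : \xi_n \in B\}$ is the first-passage time of $B$ for the jump chain of $X$ under the parameter $\lambda$. Indeed, $X$ and $\xi$ pass through the same sequence of states, and $X$ enters $B$ for the first time precisely when $\xi$ does, so the values at the first-passage time agree.

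Given this identification, I would split the initial distribution as $\pi = (\pi_A, \pi_B)$ and handle the two blocks separately. For $i \in B$, we have $\tau_B = 0$ and hence $X_{\tau_B} = i$, so the contribution to $P^\lambda_\pi(X_{\tau_B} \in \cdot)$ from the block $\pi_B$ is just $\pi_B$ itself (viewed as a measure on $B$). For $i \in A$, I would decompose the event $\{\xi_{T_B(\lambda)} = j\}$ (with $j \in B$) according to the number $n$ of steps spent in $A$ before entering $B$, obtaining
\begin{equation*}
P^\lambda_i(\xi_{T_B(\lambda)} = j) = \sum_{n=0}^\infty P^\lambda_i(\xi_1, \ldots, \xi_n \in A,\, \xi_{n+1} = j) = \sum_{n=0}^\infty \bigl[(\Omega_{AA}(\lambda))^n \Omega_{AB}(\lambda)\bigr]_{ij},
\end{equation*}
exactly as in the proof of Lemma \ref{TB}.

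The next step is to sum the geometric series. Because the Markov chain $X$ is irreducible on the finite state space $S$ for each $\lambda > 0$, its jump chain $\xi$ satisfies $P^\lambda_i(T_B(\lambda) < \infty) = 1$ for every $i \in A$. The same Perron--Frobenius argument used in Lemma \ref{invertible} then shows that $I - \Omega_{AA}(\lambda)$ is invertible with $(I - \Omega_{AA}(\lambda))^{-1} = \sum_{n=0}^\infty (\Omega_{AA}(\lambda))^n$. Substituting this into the expression above and writing the result in matrix/vector form gives, for $j \in B$,
\begin{equation*}
P^\lambda_\pi(X_{\tau_B} = j) = (\pi_B)_j + \bigl[\pi_A (I-\Omega_{AA}(\lambda))^{-1}\Omega_{AB}(\lambda)\bigr]_j,
\end{equation*}
which is the claimed identity. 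One should also note that $P^\lambda_\pi(X_{\tau_B} \in A) = 0$ by definition of $\tau_B$, so the formula describes the full measure on $S$.

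There is no serious obstacle here; the argument is essentially a bookkeeping exercise once $X_{\tau_B} = \xi_{T_B(\lambda)}$ is noted and the geometric series is justified. The only mildly subtle point is making sure the Perron--Frobenius argument of Lemma \ref{invertible} is applied to $\Omega_{AA}(\lambda)$ rather than its limit $\Omega_{AA}$; this is legitimate because irreducibility of $Q(\lambda)$ for each fixed $\lambda > 0$ (an assumption made at the start of Section~2) forces $P^\lambda_i(T_B(\lambda) < \infty) = 1$ for $i \in A$.
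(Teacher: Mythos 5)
Your proposal is correct and follows essentially the same route as the paper: split $\pi$ into its $B$-block (where $\tau_B=0$) and $A$-block, decompose the first-passage event of the jump chain by the number of steps spent in $A$, and sum the resulting geometric series in $\Omega_{AA}(\lambda)$ into $(I-\Omega_{AA}(\lambda))^{-1}\Omega_{AB}(\lambda)$. Your explicit justification that $I-\Omega_{AA}(\lambda)$ is invertible for each fixed $\lambda$ (via irreducibility of $Q(\lambda)$ and the Perron--Frobenius argument of Lemma \ref{invertible}) is a point the paper leaves implicit, but otherwise the arguments coincide.
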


\begin{proof}
For any $j\in B$,
\begin{equation}
\begin{split}
& P^\lambda_\pi(X_{\tau_B}=j)
= \pi_j + \sum_{i\in A}\pi_iP^\lambda_i(X_{\tau_B}=j) \\
&= \pi_j + \sum_{i\in A}\pi_i\sum_{n=0}^\infty\sum_{k\in A}P^\lambda_i(\xi_1,\cdots,\xi_n\in A,\xi_n=k,\xi_{n+1}=j) \\
&= \pi_j + \sum_{i\in A}\pi_i\sum_{n=0}^\infty\sum_{k\in A}[{\Omega_{AA}(\lambda)}^n]_{ik}\omega_{kj}(\lambda) \\
&= \pi_j + \sum_{i,k\in A}\pi_i[(I-\Omega_{AA}(\lambda))^{-1}]_{ik}\omega_{kj}(\lambda).
\end{split}
\end{equation}
This implies the result of this lemma.
\end{proof}

The above lemma shows that as $\lambda\rightarrow\infty$, the first-passage distribution of $B$ for the Markov chain $X$ will converge to the probability distribution
\begin{equation}\label{gammapi}
\gamma(\pi) = \pi_B + \pi_A(I-\Omega_{AA})^{-1}\Omega_{AB}.
\end{equation}
The next result is a direct corollary of Theorem \ref{shortterm1} and Lemma \ref{passagedist}.
\begin{corollary}\label{shortterm2}
Let $\pi=(\pi_A,\pi_B)$ be a probability distribution on the state space $S$ and let $\gamma(\pi) = \pi_B + \pi_A(I-\Omega_{AA})^{-1}\Omega_{AB}$. Then for any $\lambda>0$ and $\epsilon>0$, there exists $t(\lambda,\epsilon)>0$ such that $\lim_{\lambda\rightarrow\infty}t(\lambda,\epsilon) = 0$ and when $\lambda$ is sufficiently large,
\begin{equation}
d_{TV}(P^\lambda_\pi(X_{t(\lambda,\epsilon)}\in\cdot),\gamma(\pi)) \leq \epsilon.
\end{equation}
\end{corollary}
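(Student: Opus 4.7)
The plan is to derive the corollary directly from Theorem \ref{shortterm1} together with the explicit formula in Lemma \ref{passagedist}, using the triangle inequality for the total variation distance. Concretely, I would split
\begin{equation*}
d_{TV}(P^\lambda_\pi(X_{t(\lambda,\epsilon)}\in\cdot),\gamma(\pi))
\leq d_{TV}(P^\lambda_\pi(X_{t(\lambda,\epsilon)}\in\cdot),P^\lambda_\pi(X_{\tau_B}\in\cdot))
+ d_{TV}(P^\lambda_\pi(X_{\tau_B}\in\cdot),\gamma(\pi))
\end{equation*}
and bound each of the two pieces by $\epsilon/2$ for all sufficiently large $\lambda$.

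For the first piece, I would simply apply Theorem \ref{shortterm1} with tolerance $\epsilon/2$: this yields a function $t(\lambda,\epsilon/2)>0$ with $\lim_{\lambda\to\infty}t(\lambda,\epsilon/2)=0$ so that the first summand is at most $\epsilon/2$ for $\lambda$ large. We then take $t(\lambda,\epsilon):=t(\lambda,\epsilon/2)$, which inherits the vanishing property as $\lambda\to\infty$.

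For the second piece, Lemma \ref{passagedist} identifies the first-passage distribution explicitly as $P^\lambda_\pi(X_{\tau_B}\in\cdot)=\pi_B+\pi_A(I-\Omega_{AA}(\lambda))^{-1}\Omega_{AB}(\lambda)$. The target is $\gamma(\pi)=\pi_B+\pi_A(I-\Omega_{AA})^{-1}\Omega_{AB}$, so it suffices to show entrywise convergence of these two row vectors, since on a finite state space total variation is just half the $\ell^1$ distance and all matrix norms are equivalent. By hypothesis $\Omega_{AA}(\lambda)\to\Omega_{AA}$ and $\Omega_{AB}(\lambda)\to\Omega_{AB}$ entrywise; by Lemma \ref{invertible} the limit $I-\Omega_{AA}$ is invertible; and matrix inversion is continuous on the open set of invertible matrices (for example via Cramer's rule, or by noting that the Neumann series $\sum_n\Omega_{AA}(\lambda)^n$ converges uniformly in $\lambda$ once the spectral radii are uniformly bounded away from $1$). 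Hence $(I-\Omega_{AA}(\lambda))^{-1}\to(I-\Omega_{AA})^{-1}$, and the second summand tends to $0$, so it is at most $\epsilon/2$ for $\lambda$ large.

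There is no real obstacle here; the only point that might need slight care is justifying $(I-\Omega_{AA}(\lambda))^{-1}\to(I-\Omega_{AA})^{-1}$, but this is immediate from continuity of inversion together with the nonsingularity provided by Lemma \ref{invertible}. Combining the two bounds via the triangle inequality completes the proof.
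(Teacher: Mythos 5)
Your proposal is correct and follows essentially the same route as the paper: apply Theorem \ref{shortterm1} to compare $P^\lambda_\pi(X_{t(\lambda,\epsilon)}\in\cdot)$ with the first-passage distribution, use Lemma \ref{passagedist} to write that distribution as $\pi_B+\pi_A(I-\Omega_{AA}(\lambda))^{-1}\Omega_{AB}(\lambda)$, observe it converges to $\gamma(\pi)$ as $\lambda\to\infty$, and conclude by the triangle inequality. Your added justification of $(I-\Omega_{AA}(\lambda))^{-1}\to(I-\Omega_{AA})^{-1}$ via Lemma \ref{invertible} and continuity of inversion is just a more explicit version of a step the paper states without detail.
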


\begin{proof}
By Theorem \ref{shortterm1}, for any $\lambda>0$ and $\epsilon>0$, we can choose $t(\lambda,\epsilon)>0$ such that $\lim_{\lambda\rightarrow\infty}t(\lambda,\epsilon) = 0$ and when $\lambda$ is sufficiently large,
\begin{equation}
d_{TV}(P^\lambda_\pi(X_{t(\lambda,\epsilon)}\in\cdot),\pi_B + \pi_A(I-\Omega_{AA}(\lambda))^{-1}\Omega_{AB}(\lambda)) \leq \epsilon.
\end{equation}
Note that when $\lambda$ is sufficiently large,
\begin{equation}
d_{TV}(\pi_B + \pi_A(I-\Omega_{AA}(\lambda))^{-1}\Omega_{AB}(\lambda),\gamma(\pi)) \leq \epsilon.
\end{equation}
The rest of the proof follows from the triangle inequality of the total variation distance.
\end{proof}

When the initial distribution of the original chain $X$ is not concentrated on the slow state space $B$, we cannot expect that the distributions of the original chain $X$ and the reduced chain $Y$ are close to each other over the whole time axis when $\lambda$ is sufficiently large. However, we can prove that for any $h>0$, the distributions of the original chain $X$ and the reduced chain $Y$ are close to each other after time $h$ when $\lambda$ is sufficiently large.
\begin{theorem}\label{weaklocal}
Let $\pi$ be a probability distribution on the state space $S$ and let $\gamma(\pi) = \pi_B + \pi_A(I-\Omega_{AA})^{-1}\Omega_{AB}$. Then for any $0<h<T$,
\begin{equation}
\lim_{\lambda\rightarrow\infty}\sup_{h\leq t\leq T}d_{TV}(P^\lambda_\pi(X_t\in\cdot),P_{\gamma(\pi)}(Y_t\in\cdot)) = 0.
\end{equation}
\end{theorem}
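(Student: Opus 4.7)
The plan is to reduce this general-initial-distribution case to the case already treated in Corollary \ref{dtvlocal}, using Corollary \ref{shortterm2} as a ``preparation step.'' The idea is that for $\lambda$ large the chain $X$ started from $\pi$ is, after an arbitrarily short deterministic time $t_0=t(\lambda,\epsilon)\to 0$, within $\epsilon$ in total variation of the distribution $\gamma(\pi)$, which is concentrated on $B$. From that moment on I can compare $X$ directly to $Y$ using the uniform convergence already established on $[0,T]$ for initial distributions supported on $B$.

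Concretely, I would fix $\epsilon>0$ and pick $t_0=t(\lambda,\epsilon)$ via Corollary \ref{shortterm2}, so that $t_0\to 0$ as $\lambda\to\infty$ and $d_{TV}(P^\lambda_\pi(X_{t_0}\in\cdot),\gamma(\pi))\leq\epsilon$; for $\lambda$ sufficiently large I may assume $t_0<h$. For $t\in[h,T]$, I set $s=t-t_0\in(0,T]$ and apply the triangle inequality
\begin{align*}
d_{TV}(P^\lambda_\pi(X_t\in\cdot),P_{\gamma(\pi)}(Y_t\in\cdot))
&\leq d_{TV}(P^\lambda_\pi(X_t\in\cdot),P^\lambda_{\gamma(\pi)}(X_s\in\cdot)) \\
&\quad + d_{TV}(P^\lambda_{\gamma(\pi)}(X_s\in\cdot),P_{\gamma(\pi)}(Y_s\in\cdot)) \\
&\quad + d_{TV}(P_{\gamma(\pi)}(Y_s\in\cdot),P_{\gamma(\pi)}(Y_t\in\cdot)),
\end{align*}
which splits the problem into three separate estimates.

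For the first term I use the non-expansivity of total variation under a common Markov kernel: by the Markov property, both $P^\lambda_\pi(X_t\in\cdot)$ and $P^\lambda_{\gamma(\pi)}(X_s\in\cdot)$ are images of $P^\lambda_\pi(X_{t_0}\in\cdot)$ and $\gamma(\pi)$ under the same transition kernel $P^\lambda_s$, and the one-line estimate $\tfrac{1}{2}\sum_j|\sum_i(\mu_i-\nu_i)K_{ij}|\leq \tfrac{1}{2}\sum_i|\mu_i-\nu_i|$ bounds this term by $d_{TV}(P^\lambda_\pi(X_{t_0}\in\cdot),\gamma(\pi))\leq\epsilon$. The second term goes to $0$ uniformly for $s\in[0,T]$ as $\lambda\to\infty$ by Corollary \ref{dtvlocal}, since $\gamma(\pi)$ is concentrated on $B$. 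The third term is controlled by Lemma \ref{timeclose}: averaging the bound $|P_i(Y_t=j)-P_i(Y_s=j)|\leq 1-e^{-\gamma_i t_0}$ against $\gamma(\pi)_i$ and summing over $j\in B$ yields an upper bound in terms of $t_0$ and $\max_{i\in B}\gamma_i$ that tends to $0$ as $t_0\to 0$. Sending $\lambda\to\infty$ and then $\epsilon\to 0$ then delivers the theorem.

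The main obstacle I anticipate is not any single one of the three estimates but the bookkeeping around the first: non-expansivity of total variation under a Markov kernel is not stated earlier in the paper, so I would include a short self-contained justification rather than quoting it. The other terms are routine applications of the results already established in the earlier sections.
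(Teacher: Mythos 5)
Your proposal is correct and follows essentially the same route as the paper: condition at the short preparation time $t(\lambda,\epsilon)$ from Corollary \ref{shortterm2} (with $t(\lambda,\epsilon)<h$ for large $\lambda$), compare the remaining evolution over $[0,T]$ to the reduced chain started from $\gamma(\pi)$, and absorb the time shift $t-t(\lambda,\epsilon)$ versus $t$ with Lemma \ref{timeclose}. The only cosmetic difference is that you package the initial-layer error via total-variation non-expansivity under the kernel $P^\lambda_s$ (which indeed needs the one-line justification you mention, since it is not stated in the paper), whereas the paper carries out the same estimate pointwise, splitting the sum over $A$ and $B$ and invoking Theorem \ref{stronglocal} directly.
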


\begin{proof}
We only need to prove that for any $j\in B$,
\begin{equation}
\lim_{\lambda\rightarrow\infty}\sup_{h\leq t\leq T}|P^\lambda_\pi(X_t=j)-P_{\gamma(\pi)}(Y_t=j)| = 0.
\end{equation}
By Corollary \ref{shortterm2}, for any $\lambda>0$ and $\epsilon>0$, we can choose $t(\lambda,\epsilon)>0$ such that $\lim_{\lambda\rightarrow\infty}t(\lambda,\epsilon) = 0$ and when $\lambda$ is sufficiently large,
\begin{equation}\label{general1}
d_{TV}(P^\lambda_\pi(X_{t(\lambda,\epsilon)}\in\cdot),\gamma(\pi)) \leq \epsilon.
\end{equation}
Thus when $\lambda$ is sufficiently large, we have $0<t(\lambda,\epsilon)<h$. Thus for any $h\leq t\leq T$,
\begin{eqnarray*}
&& |P^\lambda_\pi(X_t=j)-P_{\gamma(\pi)}(Y_t=j)| \\
&=& |\sum_{k\in S}P^\lambda_\pi(X_{t(\lambda,\epsilon)}=k)P^\lambda_k(X_{t-t(\lambda,\epsilon)}=j)-P_{\gamma(\pi)}(Y_t=j)| \\
&\leq& \sum_{k\in A}P^\lambda_\pi(X_{t(\lambda,\epsilon)}=k) + |\sum_{k\in B}P^\lambda_\pi(X_{t(\lambda,\epsilon)}=k)P^\lambda_k(X_{t-t(\lambda,\epsilon)}=j)-P_{\gamma(\pi)}(Y_t=j)| \\
&\leq& \sum_{k\in A}P^\lambda_\pi(X_{t(\lambda,\epsilon)}=k) + |\sum_{k\in B}(P^\lambda_\pi(X_{t(\lambda,\epsilon)}=k)-\gamma_k(\pi))P^\lambda_k(X_{t-t(\lambda,\epsilon)}=j)| \\
&& + |\sum_{k\in B}\gamma_k(\pi)(P^\lambda_k(X_{t-t(\lambda,\epsilon)}=j)-P_k(Y_t=j))| \\
&\leq& \sum_{k\in S}|P^\lambda_\pi(X_{t(\lambda,\epsilon)}=k)-\gamma_k(\pi)| + |\sum_{k\in B}\gamma_k(\pi)(P^\lambda_k(X_{t-t(\lambda,\epsilon)}=j)-P_k(Y_{t-t(\lambda,\epsilon)}=j))| \\
&& + |\sum_{k\in B}\gamma_k(\pi)(P_k(Y_{t-t(\lambda,\epsilon)}=j)-P_k(Y_t=j))|.
\end{eqnarray*}
In view of \eqref{general1}, we have
\begin{equation}\label{general2}
\sum_{k\in S}|P^\lambda_\pi(X_{t(\lambda,\epsilon)}=k)-\gamma_k(\pi)| = 2d_{TV}(P^\lambda_\pi(X_{t(\lambda,\epsilon)}\in\cdot),\gamma(\pi)) \leq 2\epsilon.
\end{equation}
By Theorem \ref{stronglocal}, when $\lambda$ is sufficiently large, for any $k\in B$,
\begin{equation}\label{general3}
|P^\lambda_k(X_{t-t(\lambda,\epsilon)}=j)-P_k(Y_{t-t(\lambda,\epsilon)}=j)| \leq \epsilon.
\end{equation}
Combining \eqref{general2} and \eqref{general3} and using Lemma \ref{timeclose}, we obtain that
\begin{equation}
\begin{split}
|P^\lambda_\pi(X_t=j)-P_{\gamma(\pi)}(Y_t=j)| &\leq 2\epsilon + \epsilon + \sum_{k\in B}\gamma_k(\pi)|P_k(Y_{t-t(\lambda,\epsilon)}=j)-P_k(Y_t=j)| \\
&\leq 3\epsilon + \sum_{k\in B}\gamma_k(\pi)(1-e^{-\gamma_kt(\lambda,\epsilon)}).
\end{split}
\end{equation}
Note that
\begin{equation}
\lim_{\lambda\rightarrow\infty}\sum_{k\in B}\gamma_k(\pi)(1-e^{-\gamma_kt(\lambda,\epsilon)}) = 0.
\end{equation}
Thus when $\lambda$ is sufficiently large,
\begin{equation}
\sum_{k\in B}\gamma_k(\pi)(1-e^{-\gamma_kt(\lambda,\epsilon)}) \leq \epsilon.
\end{equation}
Thus when $\lambda$ is sufficiently large, for any $h\leq t\leq T$,
\begin{equation}
|P^\lambda_\pi(X_t=j)-P_{\gamma(\pi)}(Y_t=j)| \leq 4\epsilon.
\end{equation}
This implies the result of this theorem.
\end{proof}

If we further assume that the reduced chain $Y$ is irreducible, then the conclusion of Theorem \ref{weaklocal} can be strengthened, as shown in the following theorem.
\begin{theorem}\label{weakglobal}
Assume that the reduced chain $Y$ is irreducible. Let $\pi$ be a probability distribution on the state space $S$ and let $\gamma(\pi) = \pi_B + \pi_A(I-\Omega_{AA})^{-1}\Omega_{AB}$. Then for any $h>0$,
\begin{equation}
\lim_{\lambda\rightarrow\infty}\sup_{t\geq h}d_{TV}(P^\lambda_\pi(X_t\in\cdot),P_{\gamma(\pi)}(Y_t\in\cdot)) = 0.
\end{equation}
\end{theorem}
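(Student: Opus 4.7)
The plan is to adapt the proof of Theorem \ref{weaklocal}, replacing the finite-horizon input from Theorem \ref{stronglocal} by the global one from Theorem \ref{strongglobal}. The irreducibility hypothesis on $Y$ is used precisely to invoke Theorem \ref{strongglobal} with each starting state $k\in B$.

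First I fix $j\in B$ and $\epsilon>0$ and choose $t(\lambda,\epsilon)$ as in Corollary \ref{shortterm2}, so that $\lim_{\lambda\to\infty}t(\lambda,\epsilon)=0$ and, for all large $\lambda$, $d_{TV}(P^\lambda_\pi(X_{t(\lambda,\epsilon)}\in\cdot),\gamma(\pi))\le\epsilon$. In particular $t(\lambda,\epsilon)<h$ once $\lambda$ is large, so for every $t\ge h$ the quantity $s:=t-t(\lambda,\epsilon)$ is a nonnegative time. Using the Markov property at $t(\lambda,\epsilon)$ and splitting the inner sum over $A$ and $B$, I would decompose $|P^\lambda_\pi(X_t=j)-P_{\gamma(\pi)}(Y_t=j)|$ into exactly the four pieces that appear in the proof of Theorem \ref{weaklocal}: (i) the mass $\sum_{k\in A}P^\lambda_\pi(X_{t(\lambda,\epsilon)}=k)$; (ii) a cross term $|\sum_{k\in B}(P^\lambda_\pi(X_{t(\lambda,\epsilon)}=k)-\gamma_k(\pi))P^\lambda_k(X_s=j)|$; (iii) $|\sum_{k\in B}\gamma_k(\pi)(P^\lambda_k(X_s=j)-P_k(Y_s=j))|$; and (iv) $|\sum_{k\in B}\gamma_k(\pi)(P_k(Y_s=j)-P_k(Y_t=j))|$.

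Terms (i) and (ii) are bounded by $O(\epsilon)$ directly from Corollary \ref{shortterm2}. The crucial new step is (iii): Theorem \ref{strongglobal}, for each fixed $k\in B$, produces a threshold beyond which $\sup_{u\ge 0}|P^\lambda_k(X_u=j)-P_k(Y_u=j)|\le\epsilon$; since $B$ is finite a single threshold works for all $k\in B$ simultaneously, and hence (iii) is $\le\epsilon$ uniformly in $t\ge h$. Term (iv) is bounded by $\sum_{k\in B}\gamma_k(\pi)(1-e^{-\gamma_k t(\lambda,\epsilon)})$ via Lemma \ref{timeclose}, which vanishes as $\lambda\to\infty$ since $t(\lambda,\epsilon)\to 0$ and the sum is finite. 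Combining the four estimates gives a bound of order $\epsilon$ that is uniform in $t\ge h$ for each $j\in B$; summing over $j\in S$ and handling $j\in A$ via $\sum_{j\in A}P^\lambda_\pi(X_t=j)\le\sum_{j\in A}P^\lambda_\pi(X_{t(\lambda,\epsilon)}=j)$-type estimates plus the Markov property (exactly as in the proof of Corollary \ref{dtvlocal}) converts the pointwise bound into the total variation statement.

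The only real obstacle is bookkeeping: one must ensure that the truncation time $t(\lambda,\epsilon)$ is simultaneously small enough for Corollary \ref{shortterm2} to apply, strictly less than $h$ so that $s\ge 0$ is a valid time in the Markov decomposition, and compatible with the uniform-in-time estimate from Theorem \ref{strongglobal}. What makes Theorem \ref{weakglobal} work, and what distinguishes it from Theorem \ref{weaklocal}, is exactly that Theorem \ref{strongglobal} already supplies a bound uniform over $u\ge 0$; no finite horizon $T$ is needed, so the supremum $\sup_{t\ge h}$ is absorbed into the single estimate of term (iii).
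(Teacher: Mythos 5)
Your proposal is correct and follows essentially the route the paper intends: the paper's own proof of Theorem \ref{weakglobal} simply states that it is the same as that of Theorem \ref{weaklocal}, with the only substantive change being exactly the one you identify, namely replacing the finite-horizon input of Theorem \ref{stronglocal} by the uniform-in-time bound of Theorem \ref{strongglobal} in the term $\bigl|\sum_{k\in B}\gamma_k(\pi)(P^\lambda_k(X_{t-t(\lambda,\epsilon)}=j)-P_k(Y_{t-t(\lambda,\epsilon)}=j))\bigr|$. Your bookkeeping of the four terms and of the requirement $t(\lambda,\epsilon)<h$ matches the paper's argument, so nothing further is needed.
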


\begin{proof}
The proof of this theorem is totally the same as that of Theorem \ref{weaklocal}.
\end{proof}

\begin{figure}[!htb]
\begin{center}
\centerline{\includegraphics[width=0.8\textwidth]{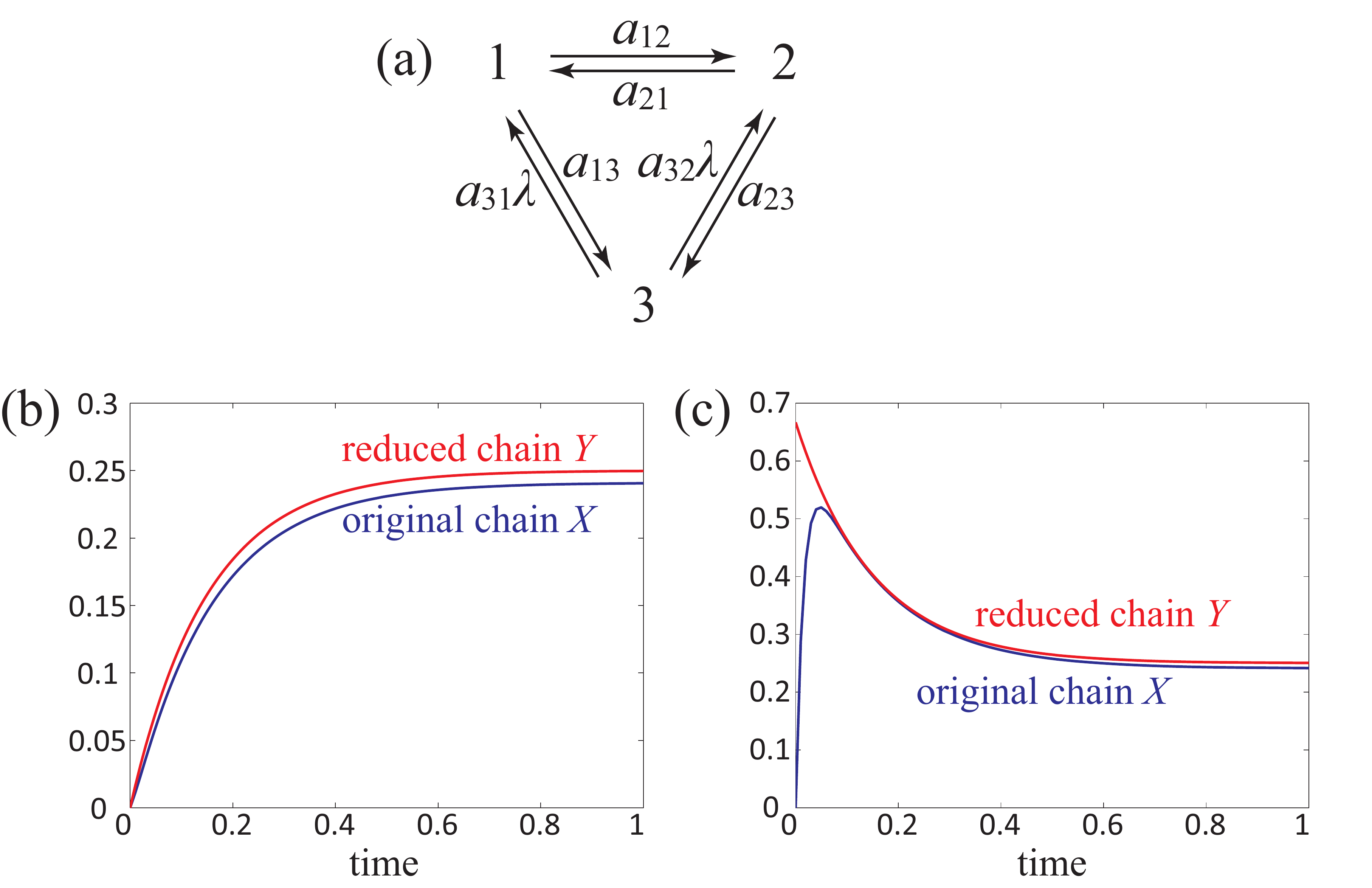}}
\caption{Illustration of the main results of this paper.}\label{threestatemodel}
\end{center}
\end{figure}

\begin{example}
Consider the three-state Markov chain $X$ illustrated in Figure \ref{threestatemodel}(a) for which two transition rates depend on $\lambda$ in a linear way and other transition rates are independent of $\lambda$. The transition rate matrix of the Markov chain $X$ is given by
\begin{equation}
Q(\lambda) =
\begin{pmatrix}
-(a_{12}+a_{13}) & a_{12} & a_{13} \\
a_{21} & -(a_{21}+a_{23}) & a_{23} \\
a_{31}\lambda & a_{32}\lambda & -(a_{31}+a_{32})\lambda
\end{pmatrix}.
\end{equation}
According to Definition \ref{states}, the fast state space is $A = \{3\}$ and the slow state space is $B = \{1,2\}$. It is easy to verify that the transition rate matrix $\Gamma$ of the reduced chain $Y$ is
\begin{equation}
\Gamma =
\begin{pmatrix}
-(a_{12}+\frac{a_{13}a_{32}}{a_{31}+a_{32}}) & a_{12}+\frac{a_{13}a_{32}}{a_{31}+a_{32}} \\
a_{21}+\frac{a_{23}a_{31}}{a_{31}+a_{32}} & -(a_{21}+\frac{a_{23}a_{31}}{a_{31}+a_{32}}) \\
 \end{pmatrix}.
\end{equation}

We shall now provide a visualized explanation of the main results of this paper. In the following discussion, we choose $a_{12} = 3$, $a_{13} = 6$, $a_{21} = 1$, $a_{23} = 1$, $a_{31} = 4$, $a_{32} = 2$, and $\lambda = 10$. With these parameters, the state transitions of the Markov chain $X$ have two separated time scales.

We first assume that the initial state of $X$ is state 2. In this case, the initial distribution of $X$ is concentrated on the slow state space $B$. By Theorem \ref{strongglobal}, the two transition probabilities, $P^\lambda_2(X_t=1)$ and $P_2(Y_t=1)$, should be close to each other over the whole time axis. This fact is illustrated in Figure \ref{threestatemodel}(b), where the blue and red lines represent the graphs of $P^\lambda_2(X_t=1)$ and $P_2(Y_t=1)$ as functions of time $t$, respectively.

We next assume that the initial state of $X$ is state 3. In this case, the initial distribution of $X$ is no longer concentrated on the slow state space $B$. By Theorem \ref{weakglobal}, the two transition probabilities, $P^\lambda_3(X_t=1)$ and $P_3(Y_t=1)$, should be close to each other after a very short time. This fact is illustrated in Figure \ref{threestatemodel}(c), where the blue and red lines represent the graphs of $P^\lambda_3(X_t=1)$ and $P_3(Y_t=1)$ as functions of time $t$, respectively.
\end{example}

\section{Relationship with the theory of singularly perturbed Markov chains}
In the previous literature, the asymptotic behavior of two-time-scales Markov chains is usually studied based on the model of singularly perturbed Markov chains. Yin, Zhang, and coworkers \cite{khashinskii1996asymptotic, yin2000asymptotic, yin2000singularly, yin2007singularly} have done a systematic study on the asymptotic behavior of singularly perturbed Markov chains using the approach of matched asymptotic expansions from singular perturbation theory, and these results have been organized into a textbook recently \cite{yin2012continuous}.

In this section, we shall discuss the relationship between our work and the theory of singularly perturbed Markov chains in detail. A continuous-time Markov chain $X$ is called a singularly perturbed Markov chain with weak and strong interactions if its transition rate matrix $Q(\lambda) = (q_{ij}(\lambda))$ depends on a parameter $\lambda>0$ in a linear way:
\begin{equation}
Q(\lambda) = \lambda\widetilde{Q}+\widehat{Q},
\end{equation}
where $\widetilde{Q}$ and $\widehat{Q}$ are two transition rate matrices. When $\lambda$ is very large, the transition rate matrix $\widetilde{Q}$ governs the rapidly changing components and the transition rate matrix $\widehat{Q}$ governs the slowly changing ones.

In this section, we assume that the singularly perturbed chain $X$ satisfies the assumptions of this paper. Let $A$ be the fast state space and let $B$ be the slow state space. For continence, we represent $\widetilde{Q}$ and $\widehat{Q}$ as block matrices
\begin{equation}
\widetilde{Q} =
\begin{pmatrix}
\widetilde{Q}_{AA} & \widetilde{Q}_{AB} \\ \widetilde{Q}_{BA} & \widetilde{Q}_{BB}
\end{pmatrix},\;\;\;
\widehat{Q} =
\begin{pmatrix}
\widehat{Q}_{AA} & \widehat{Q}_{AB} \\ \widehat{Q}_{BA} & \widehat{Q}_{BB}
\end{pmatrix}.
\end{equation}
In addition, we set $\widetilde{q}_i = -\widetilde{q}_{ii}$ and $\widehat{q}_i = -\widehat{q}_{ii}$. The next proposition follows immediately.
\begin{proposition}\label{slow}
Let $i$ be a state. Then $i$ is a slow state if and only if $i$ is an absorbing state of the transition rate matrix $\widetilde{Q}$.
\end{proposition}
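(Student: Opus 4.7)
The plan is to unwind the definitions on both sides and show each is equivalent to the single condition $\widetilde{q}_i = 0$. Since $Q(\lambda) = \lambda\widetilde{Q} + \widehat{Q}$, summing over $j \neq i$ gives
\begin{equation}
q_i(\lambda) = \sum_{j \neq i} q_{ij}(\lambda) = \lambda \sum_{j \neq i} \widetilde{q}_{ij} + \sum_{j \neq i} \widehat{q}_{ij} = \lambda \widetilde{q}_i + \widehat{q}_i.
\end{equation}

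First I would handle the ``slow state'' side. Because $\widetilde{Q}$ and $\widehat{Q}$ are genuine transition rate matrices, the off-diagonal entries $\widetilde{q}_{ij}$ and $\widehat{q}_{ij}$ are all nonnegative, so $\widetilde{q}_i \geq 0$ and $\widehat{q}_i \geq 0$. Taking $\lambda \to \infty$ in the displayed identity, $q_i = \lim_{\lambda\to\infty} q_i(\lambda)$ is finite if and only if $\widetilde{q}_i = 0$, in which case $q_i = \widehat{q}_i < \infty$; otherwise $\widetilde{q}_i > 0$ and $q_i = \infty$. By Definition \ref{states}, this says precisely that $i \in B$ iff $\widetilde{q}_i = 0$.

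Next I would handle the ``absorbing state'' side. By definition, $i$ is absorbing for $\widetilde{Q}$ iff $\widetilde{q}_{ij} = 0$ for every $j \neq i$. Since all these summands are nonnegative, their sum $\widetilde{q}_i$ vanishes iff each term vanishes. Hence $i$ is absorbing for $\widetilde{Q}$ iff $\widetilde{q}_i = 0$. Combining the two equivalences completes the proof.

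There is essentially no obstacle here: the only substantive input is the nonnegativity of off-diagonal entries in a transition rate matrix, which lets both the finiteness of the limit $q_i$ and the absorbing condition collapse to the single scalar equation $\widetilde{q}_i = 0$. I would keep the write-up to a short paragraph.
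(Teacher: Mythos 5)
Your argument is correct and is essentially the same as the paper's: both reduce the statement to the identity $q_i(\lambda)=\lambda\widetilde{q}_i+\widehat{q}_i$ and the observation that finiteness of the limit is equivalent to $\widetilde{q}_i=0$, which is the absorbing condition. Your write-up merely spells out the nonnegativity of off-diagonal rates, which the paper leaves implicit.
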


\begin{proof}
By Definition \ref{states}, $i$ is a slow state if and only if $\lim_{\lambda\rightarrow\infty}q_i(\lambda) < \infty$. Note that
\begin{equation}
q_i(\lambda) = \lambda\widetilde{q}_i+\widehat{q}_i.
\end{equation}
This shows that $i$ is a slow state if and only if $\widetilde{q}_i = 0$, that is, $i$ is an absorbing state of the transition rate matrix $\widetilde{Q}$.
\end{proof}

To proceed, let
\begin{equation}
M_A(\lambda) = \diag(q_1(\lambda),\cdots,q_{|A|}(\lambda)) = \diag(\lambda\widetilde{q}_1+\widehat{q}_1,\cdots,\lambda\widetilde{q}_{|A|}+\widehat{q}_{|A|}).
\end{equation}
In view of \eqref{omega}, we have
\begin{equation}
\begin{split}
\Omega_{AB}(\lambda) &= M_A^{-1}(\lambda)(\lambda\widetilde{Q}_{AB}+\widehat{Q}_{AB}), \\
I-\Omega_{AA}(\lambda) &= -M_A^{-1}(\lambda)(\lambda\widetilde{Q}_{AA}+\widehat{Q}_{AA}).
\end{split}
\end{equation}
These two equations imply that
\begin{equation}\label{omega2}
\Omega_{AB} = \widetilde{M}_A^{-1}\widetilde{Q}_{AB},\;\;\;I-\Omega_{AA} = -\widetilde{M}_A^{-1}\widetilde{Q}_{AA},
\end{equation}
where $\widetilde{M}_A = \diag(\widetilde{q}_1,\cdots,\widetilde{q}_{|A|})$. The next proposition follows from the above equations.
\begin{proposition}\label{fast}
Let $i$ be a state. Then $i$ is a fast state if and only if $i$ is a transient state of the transition rate matrix $\widetilde{Q}$.
\end{proposition}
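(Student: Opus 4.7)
The plan is to exploit the concrete formulas \eqref{omega2}, which identify $\Omega_{AA}$ and $\Omega_{AB}$ as the blocks of the jump chain of $\widetilde{Q}$, and then combine this with Proposition \ref{slow} (which handles the complementary case) to prove both directions.

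For the ``only if'' direction, I would suppose $i\in A$ and argue $i$ is transient for $\widetilde{Q}$. From \eqref{omega2} we read off that, for $i\in A$ and $j\in A$ with $i\neq j$, the $(i,j)$-entry of $\Omega_{AA}$ equals $\widetilde{q}_{ij}/\widetilde{q}_i$, and likewise for $i\in A$, $j\in B$ the $(i,j)$-entry of $\Omega_{AB}$ equals $\widetilde{q}_{ij}/\widetilde{q}_i$. These are precisely the one-step transition probabilities of the jump chain of $\widetilde{Q}$. By Proposition \ref{slow}, every $j\in B$ is absorbing for $\widetilde{Q}$, so the $\widetilde{Q}$-jump chain starting from any state in $A$ evolves exactly like $\eta$ until it hits $B$, after which it stays there forever. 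The standing hypothesis $P_i(T_B<\infty)=1$ for $i\in A$ therefore guarantees that the $\widetilde{Q}$-jump chain, started at $i$, almost surely leaves $A$ in finite time and is then absorbed in a slow state, so it visits $i$ only finitely often. Since transience of a state in a finite continuous-time Markov chain is equivalent to transience in the embedded jump chain, $i$ is a transient state of $\widetilde{Q}$.

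For the ``if'' direction, I would argue by contrapositive: if $i$ is not a fast state, then $i\in B$ is a slow state, so by Proposition \ref{slow}, $i$ is an absorbing state of $\widetilde{Q}$. An absorbing state is recurrent (trivially, it is a closed communicating class consisting of itself), hence not transient. This completes the equivalence.

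The only step with any substance is recognizing that $\Omega_{AA}$ and $\Omega_{AB}$, originally defined as limits of blocks of the jump chain of the full $Q(\lambda)$, literally \emph{coincide} with the jump-chain blocks of $\widetilde{Q}$; this is immediate from \eqref{omega2} together with the fact that the diagonal entries of $\Omega_{AA}$ vanish. Once this identification is in hand, the argument is a short invocation of Proposition \ref{slow} and the standing assumption $P_i(T_B<\infty)=1$, so I do not anticipate any serious obstacle.
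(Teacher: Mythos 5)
Your proof is correct, but it reaches the conclusion by a somewhat different route than the paper. Both arguments rest on the identification \eqref{omega2} and on the standing assumption $P_i(T_B<\infty)=1$, but the paper argues by contradiction at the level of recurrent classes: assuming $i\in A$ were recurrent for $\widetilde{Q}$, its recurrent class $C$ lies inside $A$ and satisfies $\widetilde{Q}_{CC}1=0$, so the computation $\Omega_{CC}=I+\widetilde{M}_C^{-1}\widetilde{Q}_{CC}$ forces $\Omega_{CC}1=1$; hence $C$ is closed for $\eta$ and $P_i(T_B<\infty)=0$, a contradiction. You instead argue forward: you observe that the rows of $\Omega$ at states of $A$ coincide with the jump chain of $\widetilde{Q}$ (using that $\widetilde{q}_i>0$ for $i\in A$, which Proposition \ref{slow} guarantees, so the division is legitimate), that $B$ consists of absorbing states of $\widetilde{Q}$, and that therefore the $\widetilde{Q}$-jump chain started at $i$ almost surely reaches $B$ and visits $i$ only finitely often, giving transience via the standard CTMC/jump-chain equivalence. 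Your version is more transparently probabilistic and makes explicit the sample-path identification of $\eta$ with the $\widetilde{Q}$-jump chain on $A$ (and, unlike the paper, you spell out the converse direction, which the paper leaves implicit as an immediate consequence of Proposition \ref{slow}); the paper's version is a shorter algebraic contradiction that never needs a convention for the jump chain at absorbing states nor the equivalence between transience of a state for a continuous-time chain and for its embedded chain. No gap in either direction of your argument.
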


\begin{proof}
Let $i$ be a fast state. Assume that $i$ is a recurrent state of the transition rate matrix $\widetilde{Q}$. Let $C$ be the recurrent class of $\widetilde{Q}$ including $i$. It is easy to set that $C$ is a subset of the fast state space $A$. We arrange matters so that $C=\{1,\cdots,|C|\}$. Let $\widetilde{Q}_{CC}$ and $\Omega_{CC}$ be the matrices obtained from $\widetilde{Q}_{AA}$ and $\Omega_{AA}$ by retaining the rows and columns corresponding to the states in $C$, respectively. Since $C$ is a recurrent class of $\widetilde{Q}$, we have $\widetilde{Q}_{CC}1 = 0$. In view of \eqref{omega2}, we have
\begin{equation}
\Omega_{CC} = I+\widetilde{M}_C^{-1}\widetilde{Q}_{CC},
\end{equation}
where $\widetilde{M}_C = \diag(\widetilde{q}_1,\cdots,\widetilde{q}_{|C|})$. As a result, we have $\Omega_{CC}1 = 1$. This implies that $C$ is a recurrent class of the discrete-time Markov chain $\eta$. Thus we have $P_i(T_B<\infty) = 0$, which contradicts our assumption. This shows that $i$ must be a transient state of the transition rate matrix $\widetilde{Q}$.
\end{proof}

\begin{remark}
By Lemmas \ref{slow} and \ref{fast}, the states in the slow state space $B$ are absorbing states of the transition rate matrix $\widetilde{Q}$ and the states in the fast state space $A$ are transient states of the transition rate matrix $\widetilde{Q}$. Under the assumptions of this paper, the transition rate matrix $\widetilde{Q}$ cannot have a recurrent class with two or more states. Therefore, the framework of this paper generalizes the model of singularly perturbed Markov chains whose $\widetilde{Q}$ has only absorbing and transient states.
\end{remark}

In view of \eqref{omega2}, the transition rate matrix $\Gamma$ of the reduced chain $Y$ has the form of
\begin{equation}
\Gamma = Q_{BB}+Q_{BA}(I-\Omega_{AA})\Omega_{AB} = \widehat{Q}_{BB}+\widehat{Q}_{BA}\widetilde{Q}_{AA}^{-1}\widetilde{Q}_{AB}.
\end{equation}
Let $\pi = (\pi_A,\pi_B)$ be a probability distribution on the state space $S$. Owing to \eqref{gammapi}, as $\lambda\rightarrow\infty$, the first-passage distribution of $B$ for the Markov chain $X$ will converge to the probability distribution
\begin{equation}
\gamma(\pi) = \pi_B + \pi_A(I-\Omega_{AA})^{-1}\Omega_{AB} = \pi_B + \pi_A\widetilde{Q}_{AA}^{-1}\widetilde{Q}_{AB}.
\end{equation}
Applying the results of this paper to singularly perturbed Markov chains, we obtain the following two theorems.

\begin{theorem}
Let $\pi$ be a probability distribution concentrated on the slow state space $B$. Then for any $T>0$,
\begin{equation}
\lim_{\lambda\rightarrow\infty}\sup_{0\leq t\leq T}d_{TV}(P^\lambda_{\pi}(X_t\in\cdot),P_{\pi}(Y_t\in\cdot)) = 0.
\end{equation}
If the reduced chain $Y$ is irreducible, then
\begin{equation}
\lim_{\lambda\rightarrow\infty}\sup_{t\geq 0}d_{TV}(P^\lambda_{\pi}(X_t\in\cdot),P_{\pi}(Y_t\in\cdot)) = 0.
\end{equation}
\end{theorem}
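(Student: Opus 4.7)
The plan is to deduce the theorem as a direct specialization of the general results already established in the paper. The key observation is that a singularly perturbed Markov chain with $Q(\lambda) = \lambda\widetilde{Q}+\widehat{Q}$ automatically fits the abstract framework under which Corollaries \ref{dtvlocal} and \ref{dtvglobal} were proved; once this is verified, the two conclusions follow immediately.

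First I would check the three standing assumptions of Section 2. The existence of $\lim_{\lambda\to\infty}q_{ij}(\lambda)$ (finite or $+\infty$) is obvious from the linear dependence $q_{ij}(\lambda) = \lambda\widetilde{q}_{ij}+\widehat{q}_{ij}$. The existence of $\lim_{\lambda\to\infty}\omega_{ij}(\lambda) = q_{ij}(\lambda)/q_i(\lambda)$ follows because the numerator and denominator are affine functions of $\lambda$, so the ratio has a well-defined limit in $[0,1]$. The substantive assumption to check is $P_i(T_B<\infty)=1$ for every $i\in A$. By Proposition \ref{slow}, the slow state space $B$ consists precisely of the absorbing states of $\widetilde{Q}$, and by Proposition \ref{fast} the fast state space $A$ consists precisely of the transient states of $\widetilde{Q}$. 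Using the identities in \eqref{omega2}, one sees that the restriction of the embedded chain $\eta$ to $A$ is governed by $\Omega_{AA} = I + \widetilde{M}_A^{-1}\widetilde{Q}_{AA}$, which is the substochastic matrix associated with the transient block of $\widetilde{Q}$; hence $\eta$ starting in $A$ leaves $A$ in finitely many steps almost surely, giving $P_i(T_B<\infty)=1$ for every $i\in A$.

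Once these verifications are in place, the first conclusion is an immediate invocation of Corollary \ref{dtvlocal}, since $\pi$ is concentrated on the slow state space $B$. The second conclusion, under the additional irreducibility hypothesis on $Y$, is an immediate invocation of Corollary \ref{dtvglobal}.

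The main (and essentially only) obstacle is the verification of $P_i(T_B<\infty)=1$ for $i\in A$: it requires identifying the structural correspondence between the transient/absorbing decomposition of $\widetilde{Q}$ and the fast/slow decomposition used throughout the paper. This correspondence is supplied by Propositions \ref{slow} and \ref{fast} together with the algebraic identities \eqref{omega2}, and no further computation is needed. Everything else is a direct appeal to the two corollaries.
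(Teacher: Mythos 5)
Your main move is the right one and is exactly what the paper does: this theorem is nothing more than Corollaries \ref{dtvlocal} and \ref{dtvglobal} invoked for a singularly perturbed chain, and your checks that $\lim_{\lambda\to\infty}q_{ij}(\lambda)$ and $\lim_{\lambda\to\infty}\omega_{ij}(\lambda)$ exist are fine, since numerator and denominator are affine in $\lambda$. The gap is in the step you yourself single out as the only substantive one: the claim that $P_i(T_B<\infty)=1$ for every $i\in A$ is automatic for singularly perturbed chains, derived from Propositions \ref{slow} and \ref{fast}. That argument is circular. Proposition \ref{fast} is proved under the paper's standing assumption $P_i(T_B<\infty)=1$ for all $i\in A$ (its proof ends by contradicting precisely that assumption), so it cannot be used to establish the assumption. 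Worse, the claim is simply false in general: if $\widetilde{Q}$ has a recurrent class $C$ with two or more states, then $C\subseteq A$, and by \eqref{omega2} the block $\Omega_{CC}=I+\widetilde{M}_C^{-1}\widetilde{Q}_{CC}$ is stochastic, so the limiting embedded chain started in $C$ never reaches $B$ and $P_i(T_B<\infty)=0$ for $i\in C$; then $I-\Omega_{AA}$ is not even invertible and the reduced chain $Y$ is undefined. A concrete example is $S=\{1,2,3\}$ with $\widetilde{q}_{12}=\widetilde{q}_{21}=1$ and all other entries of $\widetilde{Q}$ zero, with $\widehat{Q}$ irreducible.

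The paper avoids this by explicitly assuming, at the start of the singular-perturbation section, that the singularly perturbed chain satisfies all the standing assumptions of the paper (irreducibility of $Q(\lambda)$, existence of the limits, and $P_i(T_B<\infty)=1$ for $i\in A$), and it records in a Remark that this is equivalent to requiring $\widetilde{Q}$ to have only absorbing and transient states --- i.e.\ the framework covers that subclass of singularly perturbed chains, not all of them. So the fix to your proposal is not a cleverer verification but a change of logical status: take $P_i(T_B<\infty)=1$ for $i\in A$ (equivalently, no recurrent class of $\widetilde{Q}$ with two or more states) as a hypothesis inherited from the general framework, and then the two conclusions do follow immediately from Corollaries \ref{dtvlocal} and \ref{dtvglobal}, which is the paper's own route.
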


\begin{theorem}
Let $\pi$ be a probability distribution on the state space $S$. Then for any $0<h<T$,
\begin{equation}
\lim_{\lambda\rightarrow\infty}\sup_{h\leq t\leq T}d_{TV}(P^\lambda_{\pi}(X_t\in\cdot),P_{\gamma(\pi)}(Y_t\in\cdot)) = 0.
\end{equation}
If the reduced chain $Y$ is irreducible, then for any $h>0$,
\begin{equation}
\lim_{\lambda\rightarrow\infty}\sup_{t\geq h}d_{TV}(P^\lambda_{\pi}(X_t\in\cdot),P_{\gamma(\pi)}(Y_t\in\cdot)) = 0.
\end{equation}
\end{theorem}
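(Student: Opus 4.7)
The plan is to observe that this final theorem is a direct specialization of Theorem \ref{weaklocal} and Theorem \ref{weakglobal} to the singularly perturbed setting, and to verify that all hypotheses transfer cleanly, together with the specific form $\gamma(\pi) = \pi_B + \pi_A\widetilde{Q}_{AA}^{-1}\widetilde{Q}_{AB}$.

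First I would note that, by hypothesis, the singularly perturbed chain $X$ with $Q(\lambda)=\lambda\widetilde{Q}+\widehat{Q}$ satisfies the standing assumptions of the paper: the state space decomposes as $S = A \cup B$ with $A$ the fast states and $B$ the slow states, $\Omega = \lim_{\lambda\to\infty}\Omega(\lambda)$ exists, $P_i(T_B<\infty)=1$ for $i\in A$ (guaranteed by Proposition \ref{fast}, which identifies fast states as transient states of $\widetilde{Q}$), and the reduced chain $Y$ on $B$ is therefore well-defined with transition rate matrix $\Gamma$. These assumptions are precisely what is needed to invoke Theorems \ref{weaklocal} and \ref{weakglobal}.

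Next I would unpack the formula for $\gamma(\pi)$. By Theorem \ref{weaklocal}, applied in the general framework of the paper, the relevant limiting initial distribution on $B$ is
\begin{equation}
\gamma(\pi) = \pi_B + \pi_A(I-\Omega_{AA})^{-1}\Omega_{AB}.
\end{equation}
Substituting the identities from equation \eqref{omega2}, namely $\Omega_{AB} = \widetilde{M}_A^{-1}\widetilde{Q}_{AB}$ and $I-\Omega_{AA} = -\widetilde{M}_A^{-1}\widetilde{Q}_{AA}$, the diagonal factor $\widetilde{M}_A$ cancels to give
\begin{equation}
\gamma(\pi) = \pi_B + \pi_A\widetilde{Q}_{AA}^{-1}\widetilde{Q}_{AB},
\end{equation}
which matches the formula stated in this section. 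Here $\widetilde{Q}_{AA}$ is invertible because, by Proposition \ref{fast}, every state in $A$ is transient for $\widetilde{Q}$, so $\widetilde{Q}_{AA}$ is a sub-rate matrix on transient states.

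Having identified $\gamma(\pi)$ correctly, the two conclusions of the theorem follow by quoting Theorem \ref{weaklocal} (for the first statement, valid without additional irreducibility hypothesis on any finite interval $[h,T]$) and Theorem \ref{weakglobal} (for the second statement, which uses the irreducibility of the reduced chain $Y$ to upgrade the convergence to all $t\geq h$). There is essentially no new obstacle: the only content beyond citation is confirming, via Propositions \ref{slow} and \ref{fast}, that the abstract assumptions on $Q(\lambda)$, $\Omega$, and $T_B$ made in Sections 2--4 are automatic for singularly perturbed chains of the stated form, and algebraically simplifying $\gamma(\pi)$ using \eqref{omega2}. No short-time behavior analysis, no new estimates, and no reproof of Lemmas \ref{prep}, \ref{passagedist}, or Theorem \ref{shortterm1} is required, since those are absorbed into the statements of Theorems \ref{weaklocal} and \ref{weakglobal}.
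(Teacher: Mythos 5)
Your proposal is correct and takes essentially the same route as the paper, which also obtains this theorem by directly specializing Theorems \ref{weaklocal} and \ref{weakglobal} to the singularly perturbed chain and rewriting $\gamma(\pi)$ through the identities \eqref{omega2}. The only minor difference is that the paper simply \emph{assumes} the standing hypotheses (in particular $P_i(T_B<\infty)=1$ for $i\in A$) for the singularly perturbed chain, whereas you invoke Proposition \ref{fast} in the converse direction to supply this; that converse does hold for a finite chain, so the discrepancy is immaterial.
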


In the following discussion, we shall demonstrate that the results of this paper are consistent with the theory of singularly perturbed Markov chains. By the theory of singularly perturbed Markov chains \cite{yin2012continuous}, if the transition rate matrix $\widetilde{Q}$ has only absorbing states and transient states, then the distribution of the Markov chain $X$ at time $t$ will converge to the zero-order outer expansion $\phi(t) = (\phi_A(t),\phi_B(t))$ as $\lambda\rightarrow\infty$ for any $t>0$. In view of (4.86) and (4.88) in \cite{yin2012continuous}, we have $\phi_A(t) = 0$ and $\phi_B(t)$ is the solution to the following ordinary differential equation:
\begin{equation}\left\{
\begin{split}
\dot{\phi}_B(t) &= \phi_B(t)(\widehat{Q}_{BB}+\widehat{Q}_{BA}\widetilde{Q}_{AA}^{-1}\widetilde{Q}_{AB}) = \phi_B(t)\Gamma \\
\phi_B(0) &= \pi_B + \pi_A\widetilde{Q}_{AA}^{-1}\widetilde{Q}_{AB} = \gamma(\pi).
\end{split}\right.
\end{equation}
Therefore, it is easy to see that
\begin{equation}
\phi_B(t) = \gamma(\pi)e^{\Gamma t},
\end{equation}
which is exactly the distribution of the reduced chain $Y$ at time $t$ under the initial distribution $\gamma(\pi)$. Therefore, the theory of singularly perturbed Markov chains shows that for any $t>0$,
\begin{equation}
\lim_{\lambda\rightarrow\infty}d_{TV}(P_{\pi}^\lambda(X_t\in\cdot),P_{\gamma(\pi)}(Y_t\in\cdot)) = 0,
\end{equation}
which is consistent with the results of this paper. In this paper, we prove stronger results about the uniform convergence over finite time intervals and over the whole time axis.

At the end of this section, we make several remarks about the comparison between our work and the theory of singularly perturbed Markov chains.
\begin{remark}
In the theory of singularly perturbed Markov chains, the asymptotic behavior of the Markov chain $X$ is obtained using the approach of matched asymptotic expansions from singular perturbation theory. This approach is purely analytic and the probabilistic meaning of the zero-order outer expansion $\phi(t)$ is often not emphasized. In this paper, we use a purely probabilistic approach to study the asymptotic behavior of the Markov chain $X$. We prove that the distribution of the original chain $X$ will converge to that of the reduced chain $Y$ uniformly in time $t$ as $\lambda\rightarrow\infty$. The proof of Lemma \ref{complicated} explains why such convergence holds.
\end{remark}

\begin{remark}
In the theory of singularly perturbed Markov chains, the initial value $\phi(0)$ of the zero-order outer expansion is determined based on the so-called initial-value consistency condition (see (4.53) in \cite{yin2012continuous}). Therefore, the probabilistic meaning of the initial value $\phi(0)$ is not so clear. In this paper, we make it clear that the initial value $\phi(0)$ is exactly the initial distribution of the reduced chain $Y$, which the limit of the first-passage distribution of the slow state space $B$ for the Markov chain $X$ as $\lambda\rightarrow\infty$ (see \eqref{gammapi}).
\end{remark}

\begin{remark}
In the theory of singularly perturbed Markov chains, the zero-order outer expansion $\phi(t)$ acts as a good approximation for the distribution of the Markov chain $X$ when $t$ is bounded away from 0. If we are concerned with the asymptotic behavior of the Markov chain $X$ when $t$ is in a neighborhood of 0, an additional term called the initial-layer correction must be introduced. In this paper, we demonstrate that if the initial distribution is not concentrated on the slow state space $B$, then the converge of the original chain $X$ to the reduced chain $Y$ only holds when $t$ is bounded away from 0 (see Theorems \ref{stronglocal} and \ref{strongglobal}). This is in accordance with the theory of singularly perturbed Markov chains.
\end{remark}

\section{Detailed proofs}
In this section, we shall give the proof of Lemma \ref{complicated}.
\begin{proof}[Proof of Lemma \ref{complicated}]
By the definition of the transition rate matrix $\Gamma$, we have
\begin{equation}
\gamma_{kj} = q_{kj}+\sum_{n=0}^\infty\sum_{l,m\in A}q_{kl}f_{lm}^{(n)}\omega_{mj},\;\;\;
\gamma_j = q_j-\sum_{n=0}^\infty\sum_{l,m\in A}q_{jl}f_{lm}^{(n)}\omega_{mj}.
\end{equation}
Thus for any $\epsilon>0$, we can choose a sufficiently large $N$, such that for any $k,j\in B$,
\begin{equation}
\begin{split}
\left|\gamma_{kj} - \left(q_{kj}+\sum_{n=0}^{N-2}\sum_{l,m\in A}q_{kl}f_{lm}^{(n)}\omega_{mj}\right)\right| &< \frac{\epsilon}{2}, \\
\left|\gamma_j - \left(q_j-\sum_{n=0}^{N-2}\sum_{l,m\in A}q_{jl}f_{lm}^{(n)}\omega_{mj}\right)\right| &< \frac{\epsilon}{2}.
\end{split}
\end{equation}
Thus there exists $\lambda_1>0$, such that for any $\lambda>\lambda_1$,
\begin{equation}\label{rateapp}
\begin{split}
\left|\gamma_{kj} - \left(q_{kj}(\lambda)+\sum_{n=0}^{N-2}\sum_{l,m\in A}q_{kl}(\lambda)f_{lm}^{(n)}(\lambda)\omega_{mj}(\lambda)\right)\right| &< \epsilon, \\
\left|\gamma_j - \left(q_j(\lambda)-\sum_{n=0}^{N-2}\sum_{l,m\in A}q_{jl}(\lambda)f_{lm}^{(n)}(\lambda)\omega_{mj}(\lambda)\right)\right| &< \epsilon.
\end{split}
\end{equation}
Recall that the constant $M$ is defined as
\begin{equation}
M = \max_{j\in B}{q_j}+\max_{j\in B}{\gamma_j}+1.
\end{equation}
Thus there exists $\lambda_2>0$, such that for any $\lambda>\lambda_2$ and $j\in B$, we have $1<M$, $q_j(\lambda)<M$, and $\gamma_j<M$. Thus for any $h<1/M$, $\lambda>\lambda_2$, and $j\in B$, we have
\begin{equation}
h<1,\;\;\;q_j(\lambda)h < 1,\;\;\;\gamma_jh < 1.
\end{equation}
Let $S_1,S_2,\cdots$ be the holding times of the Markov chain $X$ and let $J_n = \sum_{i=1}^nS_i$. Then $J_1,J_2,\cdots$ are the jump times of the Markov chain $X$. By the semigroup property, for any $i,j\in B$,
\begin{equation}
\begin{split}
& P^\lambda_i(X_{t+h}=j) \geq \sum_{k\in B}P^\lambda_i(X_t=k)P^\lambda_k(X_h=j) \\
&= \sum_{n=0}^\infty\sum_{k\in B}P^\lambda_i(X_t=k)P^\lambda_k(X_h=j,J_n\leq h<J_{n+1}) = A_1+A_2+A_3,
\end{split}
\end{equation}
where
\begin{equation}
\begin{split}
A_1 &= \sum_{k\in B}P^\lambda_i(X_t=k)P^\lambda_k(X_h=j,J_1>h), \\
A_2 &= \sum_{k\in B}P^\lambda_i(X_t=k)P^\lambda_k(X_h=j,J_1\leq h<J_2), \\
A_3 &= \sum_{n=2}^\infty\sum_{k\in B}P^\lambda_i(X_t=k)P^\lambda_k(X_h=j,J_n\leq h<J_{n+1}).
\end{split}
\end{equation}
In the following proof, we shall estimate $A_1$, $A_2$, and $A_3$, respectively. By Lemma \ref{inequality}, we have
\begin{equation}\label{part1}
A_1 = P^\lambda_i(X_t=j)P^\lambda_j(J_1>h) = P^\lambda_i(X_t=j)e^{-q_j(\lambda)h} \geq P^\lambda_i(X_t=j)(1-q_j(\lambda)h).
\end{equation}
Moreover, we have
\begin{equation}
\begin{split}
A_2 &= \sum_{k\in B}P^\lambda_i(X_t=k)P^\lambda_k(\xi_1=j,J_1\leq h<J_2) \\
&= \sum_{k\in B}P^\lambda_i(X_t=k)P^\lambda_k(E_k\leq h<E_k+E_j)\omega_{kj}(\lambda) \\
&\geq \sum_{k\in B}P^\lambda_i(X_t=k)P^\lambda_k(E_k\leq h)P^\lambda_k(E_j>h)\omega_{kj}(\lambda) \\
&= \sum_{k\in B}P^\lambda_i(X_t=k)(1-e^{-q_k(\lambda)h})e^{-q_j(\lambda)h}\omega_{kj}(\lambda).
\end{split}
\end{equation}
where $E_k$ and $E_j$ are two independent exponential random variables with parameters $q_k(\lambda)$ and $q_j(\lambda)$, respectively. By Lemma \ref{inequality}, for any $\lambda>\lambda_2$ and $j\in B$,
\begin{equation}
e^{-q_j(\lambda)h} - (1-q_j(\lambda)h) \leq q_j(\lambda)^2h^2 \leq M^2h^2.
\end{equation}
This implies that
\begin{equation}\label{impine}
1-e^{-q_j(\lambda)h} \geq q_j(\lambda)h - M^2h^2.
\end{equation}
Thus we obtain that
\begin{equation}\label{part2}
\begin{split}
A_2 &\geq \sum_{k\in B}P^\lambda_i(X_t=k)(q_k(\lambda)h-M^2h^2)(1-q_j(\lambda)h)\omega_{kj}(\lambda) \\
&\geq \sum_{k\in B}P^\lambda_i(X_t=k)(q_k(\lambda)h-M^2h^2-q_k(\lambda)q_j(\lambda)h^2)\omega_{kj}(\lambda) \\
&\geq \sum_{k\in B}P^\lambda_i(X_t=k)(q_k(\lambda)h-2M^2h^2)\omega_{kj}(\lambda).
\end{split}
\end{equation}
Note that for any $n\geq 2$ and $k\in B$,
\begin{equation}\label{part3pre}
\begin{split}
& P^\lambda_k(X_h=j,J_n\leq h<J_{n+1}) = P^\lambda_k(\xi_n=j,J_n\leq h<J_{n+1}) \\
&\geq \sum_{k_1,\cdots,k_{n-1}\in A}P^\lambda_k(\xi_1=k_1,\cdots,\xi_{n-1}=k_{n-1},\xi_n=j,J_n\leq h<J_{n+1}) \\
&= \sum_{k_1,\cdots,k_{n-1}\in A}P^\lambda_k(E_{k,k_1,\cdots,k_{n-1}}\leq h<E_{k,k_1,\cdots,k_{n-1}}+E_j)\omega_{kk_1}(\lambda)\cdots\omega_{k_{n-1}j}(\lambda) \\
&\geq \sum_{k_1,\cdots,k_{n-1}\in A}P^\lambda_k(E_{k,k_1,\cdots,k_{n-1}}\leq h)P^\lambda_k(E_j>h)\omega_{kk_1}(\lambda)\cdots\omega_{k_{n-1}j}(\lambda) \\
&= \sum_{k_1,\cdots,k_{n-1}\in A}P^\lambda_k(E_{k,k_1,\cdots,k_{n-1}}\leq h)e^{-q_j(\lambda)h}\omega_{kk_1}(\lambda)\cdots\omega_{k_{n-1}j}(\lambda),
\end{split}
\end{equation}
where $E_{k,k_1,\cdots,k_{n-1}}$ is the sum of independent exponential random variables with parameters $q_k(\lambda)$, $q_{k_1}(\lambda),\cdots,q_{k_{n-1}}(\lambda)$, respectively, and $E_j$ is an exponential random variable with parameter $q_j(\lambda)$ which is independent of $E_{k,k_1,\cdots,k_{n-1}}$. Since $k\in B$ and $k_1,\cdots,k_{n-1}\in A$, by Slutsky's theorem, both $E_k$ and $E_{k,k_1,\cdots,k_{n-1}}$ will converge in distribution to an exponential random variable with parameter $q_k$ as $\lambda\rightarrow\infty$. Thus there exists $\lambda_3>0$, such that for any $\lambda>\lambda_3$, $n\leq N$, $k\in B$, and $k_1,\cdots,k_{n-1}\in A$,
\begin{equation}
\left|P^\lambda_k(E_{k,k_1,\cdots,k_{n-1}}\leq h)-P^\lambda_k(E_k\leq h)\right| \leq \epsilon.
\end{equation}
In view of \eqref{impine} and \eqref{part3pre}, it follows that
\begin{equation}\label{part3}
\begin{split}
& A_3 \geq \sum_{n=2}^N\sum_{k\in B}P^\lambda_i(X_t=k)P^\lambda_k(X_h=j,J_n\leq h<J_{n+1}) \\
&\geq \sum_{n=2}^N\sum_{k_1,\cdots,k_{n-1}\in A}\sum_{k\in B}P^\lambda_i(X_t=k)(P^\lambda_k(E_k\leq h)-\epsilon)(1-q_j(\lambda)h)\omega_{kk_1}(\lambda)\cdots\omega_{k_{n-1}j}(\lambda) \\
&= \sum_{n=2}^N\sum_{l,m\in A}\sum_{k\in B}P^\lambda_i(X_t=k)(1-e^{-q_k(\lambda)h}-\epsilon)(1-q_j(\lambda)h)\omega_{kl}(\lambda)f_{lm}^{(n-2)}(\lambda)\omega_{mj}(\lambda) \\
&\geq \sum_{n=2}^N\sum_{l,m\in A}\sum_{k\in B}P^\lambda_i(X_t=k)(q_k(\lambda)h-M^2h^2-\epsilon)(1-q_j(\lambda)h)\omega_{kl}(\lambda)f_{lm}^{(n-2)}(\lambda)\omega_{mj}(\lambda) \\
&\geq \sum_{n=2}^N\sum_{l,m\in A}\sum_{k\in B}P^\lambda_i(X_t=k)(q_k(\lambda)h-2M^2h^2-\epsilon)\omega_{kl}(\lambda)f_{lm}^{(n-2)}(\lambda)\omega_{mj}(\lambda).
\end{split}
\end{equation}
Combining \eqref{part1}, \eqref{part2}, and \eqref{part3}, we obtain that
\begin{eqnarray*}
&& P^\lambda_i(X_{t+h}=j) \\
&\geq& P^\lambda_i(X_t=j)(1-q_j(\lambda)h) + \sum_{k\in B}P^\lambda_i(X_t=k)(q_k(\lambda)h-2M^2h^2-\epsilon)\omega_{kj}(\lambda) \\
&& +\sum_{k\in B}P^\lambda_i(X_t=k)\sum_{n=2}^N\sum_{l,m\in A}(q_k(\lambda)h-2M^2h^2-\epsilon)\omega_{kl}(\lambda)f_{lm}^{(n-2)}(\lambda)\omega_{mj}(\lambda) \\
&=& P^\lambda_i(X_t=j)\left(1-q_j(\lambda)h+\sum_{n=2}^N\sum_{l,m\in A}q_{jl}(\lambda)f_{lm}^{(n-2)}(\lambda)\omega_{mj}(\lambda)h\right) \\
&& +\sum_{k\in B\atop k\neq j}P^\lambda_i(X_t=k)\left(q_{kj}(\lambda)h+\sum_{n=2}^N\sum_{l,m\in A}q_{kl}(\lambda)f_{lm}^{(n-2)}(\lambda)\omega_{mj}(\lambda)h\right) \\
&& -(2M^2h^2+\epsilon)\sum_{k\in B}P^\lambda_i(X_t=k)\left(\omega_{kj}(\lambda)+\sum_{n=2}^N\sum_{l,m\in A}\omega_{kl}(\lambda)f_{lm}^{(n-2)}(\lambda)\omega_{mj}(\lambda)\right).
\end{eqnarray*}
By the recurrence of the Markov chain $X$, we have
\begin{equation}
\begin{split}
& \sum_{k\in B}P^\lambda_i(X_t=k)\left(\omega_{kj}(\lambda) + \sum_{n=2}^{N}\sum_{l,m\in A}\omega_{kl}(\lambda)f_{lm}^{(n-2)}(\lambda)\omega_{mj}(\lambda)\right) \\
&\leq \sum_{k\in B}P^\lambda_i(X_t=k)P^\lambda_k\left(\bigcup_{n=1}^\infty\{\xi_n=j\}\right) \leq \sum_{k\in B}P^\lambda_i(X_t=k) \leq 1.
\end{split}
\end{equation}
Thus we obtain that
\begin{eqnarray}\label{connection}
&& P^\lambda_i(X_{t+h}=j) \nonumber \\
&\geq& P^\lambda_i(X_t=j)\left\{1-\left(q_j(\lambda)-\sum_{n=0}^{N-2}\sum_{l,m\in A}q_{jl}(\lambda)f_{lm}^{(n)}(\lambda)\omega_{mj}(\lambda)\right)h\right\} \\
&& +\sum_{k\in B\atop k\neq j}P^\lambda_i(X_t=k)\left(q_{kj}(\lambda)+\sum_{n=0}^{N-2}\sum_{l,m\in A}q_{kl}(\lambda) f_{lm}^{(n)}(\lambda)\omega_{mj}(\lambda)\right)h - (2M^2h^2+\epsilon). \nonumber
\end{eqnarray}
In view of \eqref{rateapp} and \eqref{connection}, for any $\lambda>\max\{\lambda_1,\lambda_2,\lambda_3\}$,
\begin{eqnarray}\label{finalX}
&& P^\lambda_i(X_{t+h}=j) \nonumber\\
&\geq& P^\lambda_i(X_t=j)(1-\gamma_jh-\epsilon h) + \sum_{k\in B\atop k\neq j}P^\lambda_i(X_t=k)(\gamma_{kj}h-\epsilon h) - (2M^2h^2+\epsilon) \\
&\geq& P^\lambda_i(X_t=j)(1-\gamma_jh) + \sum_{k\in B\atop k\neq j}P^\lambda_i(X_t=k)\gamma_{kj}h - 2(M^2h^2+\epsilon) \nonumber.
\end{eqnarray}
On the other hand, by the semigroup property, we have
\begin{eqnarray*}
&& P_i(Y_{t+h}=j) = \sum_{k\in B}P_i(Y_t=k)P_k(Y_h=j) \\
&=& \sum_{k\in B}P_i(Y_t=k)P_k(Y_h=j,J_1>h) + \sum_{k\in B}P_i(Y_t=k)P_k(Y_h=j,J_1\leq h<J_2) \\
&& +\sum_{k\in B}P_i(Y_t=k)P_k(Y_h=j,J_2\leq h).
\end{eqnarray*}
By Lemma \ref{inequality}, for any $\lambda>\lambda_2$ and $j\in B$,
\begin{equation}
e^{-\gamma_jh} - (1-\gamma_jh) \leq \gamma_j^2h^2 \leq M^2h^2.
\end{equation}
This implies that
\begin{equation}
e^{-\gamma_jh} \leq 1-\gamma_jh+M^2h^2.
\end{equation}
Thus we obtain that
\begin{equation}\label{part1Y}
\begin{split}
& \sum_{k\in B}P_i(Y_t=k)P_k(Y_h=j,J_1>h) = P_i(Y_t=j)P_j(J_1>h) \\
&= P_i(Y_t=j)e^{-\gamma_jh} \leq P_i(Y_t=j)(1-\gamma_jh+M^2h^2).
\end{split}
\end{equation}
Let $\sigma = \{\sigma_n:n\geq 0\}$ be the jump chain of the induced chain $Y$ and let $P = (p_{ij})$ be the jump matrix of the reduced chain $Y$. By Lemma \ref{inequality}, we have
\begin{equation}\label{part2Y}
\begin{split}
& \sum_{k\in B}P_i(Y_t=k)P_k(Y_h=j,J_1\leq h<J_2) = \sum_{k\in B}P_i(Y_t=k)P_k(\sigma_1=j,J_1\leq h<J_2) \\
&\leq \sum_{k\in B}P_i(Y_t=k)P_k(F_k\leq h)p_{kj} = \sum_{k\in B}P_i(Y_t=k)(1-e^{-\gamma_kh})p_{kj} \\
&\leq \sum_{k\in B}P_i(Y_t=k)(\gamma_{k}h)p_{kj} = \sum_{k\in B\atop k\neq j}P_i(Y_t=k)\gamma_{kj}h,
\end{split}
\end{equation}
where $F_k$ is an exponential random variable with parameter $\gamma_k$. By Lemma \ref{inequality}, we obtain that
\begin{equation}\label{part3Y}
\begin{split}
& \sum_{k\in B}P_i(Y_t=k)P_k(Y_h=j,J_2\leq h) \leq \sum_{k,l\in B}P_i(Y_t=k)P_k(\sigma_1=l,J_2\leq h) \\
&= \sum_{k,l\in B}P_i(Y_t=k)P_k(F_k+F_l\leq h)p_{kl} \leq \sum_{k,l\in B}P_i(Y_t=k)P_k(F_k\leq h)P_k(F_l\leq h)p_{kl} \\
&\leq \sum_{k,l\in B}P_i(Y_t=k)(\gamma_kh)(\gamma_lh)p_{kl} \leq \sum_{k,l\in B}P_i(Y_t=k)M^2h^2p_{kl} = M^2h^2,
\end{split}
\end{equation}
where $F_k$ and $F_l$ are two independent exponential random variables with parameters $\gamma_k$ and $\gamma_l$, respectively. Combining \eqref{part1Y}, \eqref{part2Y}, and \eqref{part3Y}, we obtain that
\begin{equation}\label{finalY}
\begin{split}
P_i(Y_{t+h}=j) &\leq P_i(Y_t=j)(1-\gamma_jh+M^2h^2) + \sum_{k\in B\atop k\neq j}P_i(Y_t=k)\gamma_{kj}h + M^2h^2 \\
&\leq P_i(Y_t=j)(1-\gamma_jh) + \sum_{k\in B\atop k\neq j}P_i(Y_t=k)\gamma_{kj}h + 2M^2h^2.
\end{split}
\end{equation}
In view of the assumption of this lemma, for any $i,j\in B$,
\begin{equation}
P^\lambda_i(X_t=j)-P_i(Y_t=j) \geq -\eta.
\end{equation}
Combining \eqref{finalX} and \eqref{finalY}, we obtain that
\begin{eqnarray*}
&& P^\lambda_i(X_{t+h}=j) \\
&\geq& (P_i(Y_t=j)-\eta)(1-\gamma_jh) + \sum_{k\in B\atop k\neq j}(P_i(Y_t=k)-\eta)\gamma_{kj}h - 2(M^2h^2+\epsilon) \\
&\geq& P_i(Y_{t+h}=j) - (1-\gamma_jh+\sum_{k\in B\atop k\neq j}\gamma_{kj}h)\eta - (4M^2h^2+2\epsilon) \\
&\geq& P_i(Y_{t+h}=j) -(1+|B|Mh)\eta - (4M^2h^2+2\epsilon).
\end{eqnarray*}
This completes the proof of this lemma.
\end{proof}

\section*{Acknowledgements}
The author gratefully acknowledges Professor Da-Quan Jiang for supporting my research on the present work and gratefully acknowledges the anonymous reviewers for their valuable comments and suggestions.

\setlength{\bibsep}{5pt}
\small\bibliographystyle{unsrt}
\bibliography{reduction}
\end{document}